\DeclareMathAlphabet{\mathpzc}{OT1}{pzc}{m}{it}
\DeclareMathOperator{\oph}{Op_h}
\DeclareMathOperator{\ophti}{Op_{h,\tilde{h}}}
\DeclareMathOperator{\ophtis}{Op^\Sigma_{h,\tilde{h}}}
\DeclareMathOperator*{\op}{Op}
\DeclareMathOperator*{\supp}{supp}
\DeclareMathOperator*{\Id}{Id}
\newtheorem{theorem}{Theorem}
\numberwithin{prop}{section}
\newtheorem{corol}{Corollary}
\numberwithin{corol}{section}
\newtheorem{lemma}{Lemma}
\numberwithin{lemma}{section}
\numberwithin{conjecture}{section}
\theoremstyle{definition}
\newtheorem{defin}{Definition}
\numberwithin{defin}{section}
\numberwithin{figure}{section}
\newcommand{\bl}{\begin{flushleft}}
\newcommand{\el}{\end{flushleft}}
\newcommand{\br}{\begin{flushright}}
\newcommand{\ert}{\end{flushright}}
\newcommand{\bc}{\begin{center}}
\newcommand{\ec}{\end{center}}
\newcommand{\imply}{\Rightarrow}
\newcommand{\numList}{\begin{enumerate}}
\newcommand{\enumList}{\end{enumerate}}
\newcommand{\composed}{\text{\textopenbullet}}
\newcommand{\e}{\epsilon}
\newcommand{\re}{\mathbb{R}}
\newcommand{\la}{\langle}
\newcommand{\ra}{\rangle}
\newcommand{\mc}[1]{\mathcal{#1}}
\theoremstyle{remark}
\newtheorem{remark}{Remark}
\newcommand{\Cc}{C_c^\infty}
\renewcommand{\O}[1]{\mathcal{O}_{#1}}
\renewcommand{\o}[1]{\mathpzc{o}_{#1}}
\newcommand{\Ph}[2]{\Psi^{#1}_{#2}}
\newcommand{\RN}[1]{\textup{\uppercase\expandafter{\romannumeral#1}}
}
\title{The $L^2$ behavior of eigenfunctions near the glancing set}
\author{Jeffrey Galkowski}
\email{jeffrey.galkowski@stanford.edu}
\address{Mathematics Department, Stanford University, 380 Serra Mall, Stanford, 
CA 94305, USA}
\begin{document}
\begin{abstract}
Let $M$ be a compact manifold with or without boundary and $H\subset M$ be a smooth, interior hypersurface. We study the restriction of Laplace eigenfunctions solving $(-h^2\Delta_g-1)u=0$ to $H$. In particular, we study the degeneration of $u|_H$ as one microlocally approaches the glancing set by finding the optimal power $s_0$ so that $(1+h^2\Delta_H)_+^{s_0}u|_H$ remains uniformly bounded in $L^2(H)$ as $h\to 0$. Moreover, we show that this bound is saturated at every $h$-dependent scale near glancing using examples on the disk and sphere. We give an application of our estimates to quantum ergodic restriction theorems.
\end{abstract}
\maketitle

\section{Introduction}
Let $(M,g)$ be a compact Riemannian manifold with or without boundary. We consider the eigenvalue problem 
$$\begin{cases}(-\Delta_g-\lambda_j^2)u_j=0&\text{ on }M\\
\la u_j,u_k\ra=\delta_{jk}\\
Bu_j=0&\text{ on }\partial M.
\end{cases}$$
Here, $\Delta_g$ is the negative Laplacian, $\la u,v\ra$ denotes the $L^2$ inner product on $M$, and either $Bu=u$ for Dirichlet eigenvalues or $Bu=\partial_\nu u$ for Neumann eigenvalues. Our main goal is to give a precise understanding of the concentration of such eigenfunctions on hypersurfaces. We say that $H\subset M$ is an \emph{interior hypersurface} if it is a smooth embedded hypersurface with $d(H,\partial M)>0$. For convenience, we write $\lambda_j=h_j^{-1}$ and $u_j=u_{h_j}$. 

Sharp $L^p$ bounds for eigenfunctions restricted to hypersurfaces have been studied by Burq--Gerard--Tzvetkov, Hassell--Tacy, Tacy, and Tataru \cite{BGT, HTacy,T,Tat}. In particular, these works show that 
\begin{equation}
\label{e:stdEst}\|u|_{H}\|_{L^2(H)}\leq C\begin{cases}h^{-1/4}&H\text{ general}\\
h^{-1/6}&H\text{ is curved}\end{cases}
\end{equation}
where we say $H$ is \emph{curved} if it has positive definite second fundamental form. Optimal bounds for restrictions of normal derivatives of eigenfunctions of the form 
\begin{equation}
\label{eqn:neumannBound}\|h\partial_{\nu_H}u|_{H}\|_{L^2(H)}\leq C\end{equation}
were given by Christianson--Hassell--Toth and Tacy in \cite{christianson2014exterior,T14}. Heuristically, $h\partial_{\nu_H}u\sim (1+h^2\Delta_H)_+^{1/2}u$, where $\Delta_H$ denotes the (negative definite) Laplace--Beltrami operator on $H$, so the bound \eqref{eqn:neumannBound} roughly says that 
\begin{equation}
\label{eqn:weightedBound}
\|(1+h^2\Delta_H)_+^{1/2}u|_{H}\|_{L^2(H)}\leq C
\end{equation}
and in fact, the bound \eqref{eqn:weightedBound} is an easy consequence of \cite[Section 4]{christianson2014exterior}. Here, 
$$(x)_+^s=\begin{cases}x^s&x>0\\0&x\leq 0.\end{cases}$$

When $H=\partial M$, concentration questions have been addressed in Barnett--Hassell--Tacy and Hassell--Tao \cite{BHT,HTao,HTaoErr}. In particular, for respectively Dirichlet and Neumann eigenfunctions we have the sharp estimates
$$\|h\partial_\nu u|_{\partial M}\|_{L^2(\partial M)}\leq C,\quad\quad \|u|_{\partial M}\|_{L^2(\partial M)}\leq Ch^{-1/3}.$$
Moreover, in \cite{BHT} the authors show that for Neumann eigenfunctions
\begin{equation}
\label{eqn:weightedBoundBoundary}\|(1+h^2\Delta_{\partial M})_+^{1/2}u|_{\partial M}\|_{L^2(\partial M)}\leq C.
\end{equation}
The authors also show that the power $1/2$ in \eqref{eqn:weightedBoundBoundary} is optimal in the sense that there are Neumann eigenfunctions such that replacing $1/2$ by $\rho<1/2$ may result in an $L^2$ norm that is not uniformly bounded. 

\subsection{Results}

This raises the question of whether the power $1/2$ in \eqref{eqn:weightedBound} is optimal. We will see that the optimal power is $1/4$ for interior hypersurfaces. Throughout the rest of the paper, we use the notation $a+$ or $a-$ to mean that a statement holds respectively with $a$ replaced by $a+\e$ and $a-\e$ for any $\e>0$. When we use this notations, all constants may depend on the $\e$ chosen.
\begin{theorem}
\label{thm:simple}
Let $H\subset M$ be an interior hypersurface. Then if $H$ is curved or $H$ is totally geodesic 
$$\left\|(1+h^2\Delta_H)_+^{1/4+}u|_{H}\right\|_{L^2(H)}\leq C.$$
\end{theorem}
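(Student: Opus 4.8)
The plan is to analyze $u|_H$ by decomposing phase space near $H$ into the microlocal region where the tangential frequency is bounded away from glancing and the region near the glancing set, and to estimate each contribution separately. To set up notation, I would work in Fermi normal coordinates $(x,t)$ near $H$, where $t$ is the signed distance to $H$ and $H = \{t=0\}$; the symbol of $-h^2\Delta_g-1$ takes the form $\tau^2 + r(x,t,\xi) - 1$ where $\tau$ is dual to $t$, $r$ is the principal symbol of a $t$-family of operators on $H$, and $r(x,0,\xi) = |\xi|_{g_H}^2$ when $H$ is totally geodesic while $\partial_t r(x,0,\xi)$ encodes the second fundamental form when $H$ is curved. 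The glancing set corresponds to $\{r(x,0,\xi) = 1, \tau = 0\}$, i.e.\ tangential frequency $|\xi|_{g_H}=1$. The weight $(1+h^2\Delta_H)_+^{1/4+}$ then microlocally cuts off to $1 - |\xi|_{g_H}^2 \gtrsim 0$ and multiplies by $(1-|\xi|_{g_H}^2)_+^{1/4+}$, so we must show $\|\mathrm{Op}_h((1-|\xi|^2_{g_H})_+^{1/4+}) u|_H\|_{L^2(H)} \le C$.

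Next I would dyadically decompose $1-|\xi|_{g_H}^2 \sim 2^{-k}$ for $h^{2/3} \lesssim 2^{-k} \lesssim 1$ (the lower cutoff at $h^{2/3}$ coming from curved-hypersurface heuristics / the width of the glancing layer) and bound the piece $\chi_k(hD) u|_H$ where $\chi_k$ is supported where $1-|\xi|_{g_H}^2 \sim 2^{-k}$. The cleanest route is a restriction-via-operator-norm argument: on each dyadic piece, $u$ is microlocalized to a region where $\tau^2 = 1-r \sim 2^{-k}$, so away from glancing one has good control. I would either (i) use a $TT^*$ / wave-packet argument adapting Tacy's and Burq--Gerard--Tzvetkov's approach, writing the restriction operator as a semiclassical FIO and using stationary/nonstationary phase, tracking how the bound degrades as $2^{-k}\to 0$; or (ii) use the identity relating $\|u|_H\|^2$ to a commutator pairing $\langle [h D_t, \Phi] u, u\rangle$ with $\Phi$ supported near $H$ — essentially the Rellich/Morawetz-type commutator used in \cite{christianson2014exterior} — which for a fixed dyadic block reduces the restriction estimate to an interior $L^2$ bound on $u$ times the symbolic size of the block. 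The expected outcome is that the $k$-th block contributes $\lesssim 2^{k/4} \cdot (\text{something uniformly bounded})$ to $\|u|_H\|_{L^2}$ before applying the weight, and since the weight contributes a factor $(2^{-k})^{1/4+} = 2^{-k/4-k\e}$, the weighted blocks are summable in $k$ (this is exactly where the power $1/4$ rather than $1/4+$ would fail: the geometric series in $k$ over the $\sim \log(1/h)$ many scales needs the extra $\e$ to converge uniformly, or one absorbs a $\log$ loss into the $\e$).

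For the glancing block itself — tangential frequencies within $h^{2/3}$ of $1$ — the weight is then $\lesssim h^{1/6+}$, which more than compensates for the worst-case $h^{-1/6}$ restriction bound \eqref{e:stdEst} in the curved case, and in the totally geodesic case one gets the general $h^{-1/4}$ restriction bound times $h^{1/4+}$ from the smaller relevant glancing width (for a totally geodesic $H$ the eigenfunction near glancing behaves like the boundary case, and the analysis of \cite{BHT} or a direct model computation on the cylinder/flat case gives the sharper balance). So the totally geodesic and curved cases are handled by slightly different bookkeeping of the glancing-layer width and the corresponding base restriction estimate, but both reduce to: (base restriction bound on the $k$-th block) $\times$ (weight size on the $k$-th block), summed over $O(\log 1/h)$ dyadic scales.

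The main obstacle I anticipate is establishing the sharp $2^{k/4}$ (resp.\ the right power in the totally geodesic case) growth of the unweighted restriction estimate on each dyadic block \emph{uniformly in $k$} down to the glancing scale — i.e.\ proving a scale-by-scale refinement of the Burq--Gerard--Tzvetkov/Tacy/Christianson--Hassell--Toth estimates rather than just their endpoint versions. This requires a careful semiclassical FIO or wave-packet analysis in which one rescales the $2^{-k}$-thick frequency annulus and controls the stationary phase contributions with constants that do not blow up as $2^{-k}\to h^{2/3}$; the geometry of $H$ (curved vs.\ totally geodesic) enters precisely in how the phase's Hessian degenerates, which is why the two hypotheses on $H$ are needed and why an arbitrary hypersurface would require a different (worse) power. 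A secondary technical point is handling the transition region between the dyadic blocks and the glancing block cleanly, and making sure the weight $(1+h^2\Delta_H)_+^{1/4+}$, which is only a symbol of limited regularity near $|\xi|_{g_H}=1$, can be incorporated into the pseudodifferential calculus on $H$ (standard, but needs the $+\e$ to give an honest symbol class).
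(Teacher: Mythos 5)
Your high-level architecture matches the paper's: a dyadic decomposition in the distance to glancing, a block-wise restriction bound of size $2^{k/4}$ on the block where $1-|\xi'|_{g_H}^2\sim 2^{-k}$, multiplication by the weight $(2^{-k})^{1/4+\e}$, summation over the $O(\log h^{-1})$ scales (where the extra $\e$ makes the series converge), and a separate endpoint treatment of the innermost glancing layer of width $h^{2/3}$ (curved) or $h$ (totally geodesic) using \eqref{e:stdEst}. The numerology is right throughout.

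However, there is a genuine gap exactly at the step you flag as the ``main obstacle'': the uniform-in-scale block estimate is not proved, and the two routes you sketch do not obviously yield it. The difficulty is not merely ``rescale the annulus and do stationary phase carefully.'' The quasimode equation $Pu=\O{L^2}(h)$ gives microlocal control on $u$ in the \emph{ambient} phase space $T^*M$, not on $T^*H$, and near glancing the projection $\pi:\Sigma\to\Sigma_0$ has a square-root singularity: the region $\{|\partial_\nu p|\sim\tilde h^{1/2}\}\cap\Sigma$ maps to $\{1-|\xi'|^2\sim\tilde h\}$. So a tangential cutoff $\chi_k(hD_{x'})$ applied to $u|_H$ cannot be estimated directly by quoting Tacy/BGT-type bounds; one must first prove that ambient localization in the normal frequency $\partial_\nu p$ at scale $\tilde h^{1/2}$ passes, after restriction, to tangential localization in a defining function of $\mc{G}_0$ at scale $\tilde h$. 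This is the content of Lemma \ref{lem:microlocalize}, proved via the second microlocal calculus of \cite{SjoZwDist,SjZwFrac}, the normal form $\Sigma=\{(\partial_\nu p)^2=eb\}$ of Lemma \ref{lem:bdfForm}, and a non-stationary phase argument whose gain per integration by parts is $h\tilde h^{-3/2}$ — which is precisely why the decomposition stops at $\tilde h=h^{2/3}$ for general/curved $H$ and only reaches $\tilde h=h$ when $H$ is totally bicharacteristic (the second-order term in the flow drops out, improving the gain to $h\tilde h^{-1}$). Your proposal attributes the $h^{2/3}$ threshold to ``curved-hypersurface heuristics,'' but in the actual proof it is a constraint coming from this microlocal equivalence, not from the geometry of the restriction estimate itself. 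Once that equivalence is in hand, the block bound $\tilde h^{-1/4}$ is essentially off-the-shelf from Tacy's \cite[Proposition 1.1]{T14} applied with $|\partial_\nu p|\sim\tilde h^{1/2}$ (Lemma \ref{lem:normal}). A secondary omission: for the endpoint power $1/4$ (as opposed to $1/4+$) the paper cannot use the crude triangle inequality over scales and instead needs Cotlar--Knapp--Stein almost orthogonality, which in turn requires the dynamical Lemma \ref{lem:dynamics}; for Theorem \ref{thm:simple} as stated ($1/4+$) your summation is adequate, so this is not a gap for this particular statement.
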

\noindent For the definition of a totally geodesic hypersurface see \eqref{e:bichar}.
Theorem \ref{thm:simple} is actually a consequence of our next theorem (together with \eqref{e:stdEst}) which applies to more general hypersurfaces. 

Before stating our next theorem, we introduce some notation for a regularization of $(1+h^2\Delta_H)_+^{s}$. Let $\chi_1,\chi_2\in C^\infty(\re)$ with $\chi_1\equiv 1$ on $[2,\infty)$, $\supp \chi_1\subset [1,\infty)$ and 
$\chi_1+\chi_2\equiv 1.$
Let 
\begin{gather} 
G_1^{\rho,s}(\sigma):=\sigma^s\chi_1\left(\frac{\sigma}{h^\rho}\right),\quad\quad G_2^{\rho,s}(\sigma):=h^{s\rho}\chi_2\left(\frac{\sigma}{h^\rho}\right)\label{e:Gdef}\\
G^{\rho,s}(\sigma):=G_1^{\rho,s}(\sigma)+G_2^{\rho,s}(\sigma).\nonumber
\end{gather}
We define
$G_{i}^{\rho,s}(1+h^2\Delta_H)$ using the functional calculus.

\begin{theorem}
\label{thm:laplace}
Let $H\subset M$ be an interior hypersurface. Then
\begin{gather*} \left\|\Big[G_1^{2/3-,1/4}(1+h^2\Delta_H)\Big]u|_{H}\right\|_{L^2(H)}\leq C(\log h^{-1})^{-1/2}\\
\left\|\Big[G_1^{2/3,1/4+}(1+h^2\Delta_H)\Big]u|_{H}\right\|_{L^2(H)}\leq C.
\end{gather*}
and
\begin{gather*} \left\|\Big[G_1^{2/3-,-1/4}(1+h^2\Delta_H)\Big]h\partial_{\nu_H}  u|_{H}\right\|_{L^2(H)}\leq C(\log h^{-1})^{-1/2}\\
\left\|\Big[G_1^{2/3,-1/4+}(1+h^2\Delta_H)\Big]h\partial_{\nu_H}  u|_{H}\right\|_{L^2(H)}\leq C.
\end{gather*}
If $H$ is nowhere tangent to the geodesic flow to infinite order,
\begin{gather*} 
\left\|\Big[G_1^{2/3-,1/4}(1+h^2\Delta_H)\Big]u|_{H}\right\|_{L^2(H)}+\left\|\Big[G_1^{2/3,1/4+}(1+h^2\Delta_H)\Big]u|_{H}\right\|_{L^2(H)}\leq C,\\
\left\|\Big[G_1^{2/3-,1/4}(1+h^2\Delta_H)\Big]h\partial_{\nu_H}  u|_{H}\right\|_{L^2(H)}+\left\|\Big[G_1^{2/3,1/4+}(1+h^2\Delta_H)\Big]h\partial_{\nu_H} u|_{H}\right\|_{L^2(H)}\leq C,
\end{gather*}
Moreover, if $H$ is totally geodesic, then 
\begin{gather*}
\left\|\Big[G_1^{1-,1/4}(1+h^2\Delta_H)\Big]u|_{H}\right\|_{L^2(H)}+\left\|\Big[G_1^{1,1/4+}(1+h^2\Delta_H)\Big]u|_{H}\right\|_{L^2(H)}\leq C,\\
\left\|\Big[G_1^{1-,-1/4}(1+h^2\Delta_H)\Big]h\partial_{\nu_H}  u|_{H}\right\|_{L^2(H)}+\left\|\Big[G_1^{1,-1/4+}(1+h^2\Delta_H)\Big]h\partial_{\nu_H}  u|_{H}\right\|_{L^2(H)}\leq C.
\end{gather*}
\end{theorem}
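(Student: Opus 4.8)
The plan is to reduce everything to a microlocal estimate near the glancing set and then exploit the structure of the restriction operator in Fermi normal coordinates around $H$. First I would straighten $H$ to $\{x_n=0\}$ using Fermi normal coordinates, so that $-h^2\Delta_g = (hD_{x_n})^2 + R(x_n,x',hD_{x'}) + hD_{x_n}\,b(x)$ for a family $R$ of second-order tangential operators with $R(0,x',\xi')$ equal to the symbol of $-h^2\Delta_H$; the glancing set is where $1-R(0,x',\xi')=0$, i.e.\ exactly where the weight $(1+h^2\Delta_H)$ vanishes. The eigenfunction equation becomes a semiclassical ODE in $x_n$ with a tangential parameter, and the key object is the solution near a simple turning point, controlled by an Airy-type parametrix. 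The plan is then to localize to a dyadic region $\{|1-R(0,x',\xi')|\sim h^{2\delta}\}$ via a partition of unity $\cnt$, so that $G_1^{\rho,s}$ is supported where $\sigma\gtrsim h^\rho$, and to prove on each piece the bound $\|\sigma^{1/4}\,\chi(\sigma/h^\rho)\,u|_H\|_{L^2}\leq C$ with the $(\log h^{-1})^{-1/2}$ gain coming from summing a geometric-in-scale series of $O(1)$ contributions over $O(\log h^{-1})$ scales between $h^\rho$ and $h^0$.

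The core estimate is the following: for a single frequency localization at transverse distance $\sigma = 1 - |h\xi'|^2_g \sim h^{2\delta}$ from glancing (with $0\le \delta \le 1/3$ in the generic case, $\le 1/2$ in the totally geodesic case), the restriction satisfies $\|u|_H\|_{L^2} \lesssim \sigma^{-1/4}\|u\|_{L^2(M)}$, while $\|h\partial_{\nu_H} u|_H\|_{L^2}\lesssim \sigma^{1/4}\|u\|_{L^2(M)}$. The mechanism is that in the elliptic region ($\sigma>0$) the solution to the turning-point ODE behaves like $\sigma^{-1/4}$ times an Airy function evaluated away from its zero, giving the $\sigma^{-1/4}$ gain for $u|_H$ and the complementary $\sigma^{1/4}$ decay for $h\partial_{\nu_H}u|_H$ — this is precisely the Airy scaling $\mathrm{Ai}(t)\sim t^{-1/4}$, $\mathrm{Ai}'(t)\sim t^{1/4}$ for $t\to+\infty$. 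I would make this rigorous using an $h$-dependent second-microlocal calculus (or a direct Airy parametrix as in the Hassell--Tao and Barnett--Hassell--Tacy arguments) to construct an approximate solution operator from $u|_{\{x_n=c h^{2/3}\}}$ (a safely transverse hypersurface where \eqref{e:stdEst} applies) back to $H$, tracking the $\sigma$-dependence of all constants. Squaring and summing $\int \sigma^{1/2}\chi_1(\sigma/h^\rho)^2 \,d\mu(\sigma)$ over the dyadic scales, and using that the spectral measure of $1+h^2\Delta_H$ on the range of $u|_H$ is controlled (via \eqref{e:stdEst} or its refinement) by a bounded density in $\sigma$, gives the two displayed bounds; the cutoff at $\rho = 2/3$ (resp.\ $\rho=1$) is exactly the threshold below which the transverse scale $h^{2/3}$ (resp.\ the Ehrenfest-type scale for a totally geodesic surface, where no transverse turning occurs and one gains all the way to $h$) no longer controls the parametrix.

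The totally geodesic case is better because the bicharacteristic tangent to $H$ at a glancing point stays in $H$ to infinite order, so there is no transverse focusing and the solution near glancing is governed not by an Airy turning point but by an essentially one-dimensional tangential problem; concretely, $R(x_n,x',\xi')$ has a degenerate $x_n$-dependence (vanishing $\partial_{x_n}$ of the relevant coefficient at $x_n=0$) which pushes the turning-point analysis out to scale $h$ rather than $h^{2/3}$, hence the improvement from $G_1^{2/3,\cdot}$ to $G_1^{1,\cdot}$. Concretely I would verify, using the definition in \eqref{e:bichar}, that the linearized transverse dynamics degenerate and rerun the parametrix with the flat profile. For the derivative estimates one simply differentiates the parametrix in $x_n$; by the Airy identity $\mathrm{Ai}''(t)=t\,\mathrm{Ai}(t)$ the second derivative reintroduces a factor $\sigma$, which is why the weight for $h\partial_{\nu_H}u$ is $\sigma^{-1/4}$ rather than $\sigma^{1/4}$, consistent with the heuristic $h\partial_{\nu_H}\sim(1+h^2\Delta_H)_+^{1/2}$ away from glancing but \emph{reversed} in its $\sigma$-scaling right at the turning point.

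The main obstacle I expect is making the turning-point parametrix uniform in the tangential variable $x'$ and in the scale $h^{2\delta}$ simultaneously, including the region where $\sigma$ is genuinely comparable to $h^{2/3}$ (so the turning point is at unit scale in the rescaled Airy variable and neither the elliptic nor the classically allowed asymptotics apply); handling this overlap region, and showing the parametrix remainder is $O(h^\infty)$ relative to the $\sigma$-weighted norm there, is the technical heart. A secondary difficulty is that the hypotheses ``$H$ nowhere tangent to infinite order'' versus the totally geodesic case must be distinguished purely in terms of how the subprincipal/transverse part of $R$ vanishes at $x_n=0$; I would encode this via the order of contact of the geodesic flow with $H$ and feed it into the order of the turning point, but checking that a higher-order (non-simple) turning point still yields the same $\sigma^{1/4}$ scaling — or identifying exactly which hypersurfaces fail — is where the argument is most delicate, and is presumably why the general (merely finite-order-tangent) case is left out of this last display.
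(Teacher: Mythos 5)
Your overall skeleton is the same as the paper's: dyadically decompose near the glancing set, prove a per-scale bound $\|u|_H\|\lesssim\sigma^{-1/4}$ (and $\|h\partial_{\nu_H}u|_H\|\lesssim\sigma^{1/4}$) for the piece localized at distance $\sigma$ from glancing, weight by $\sigma^{\pm 1/4}$, and sum the $O(\log h^{-1})$ scales between $h^{\rho}$ and $1$; the exponents and the thresholds $h^{2/3}$ versus $h$ are all correctly identified. But there are two genuine gaps. First, your per-scale estimate is stated for a \emph{tangential} localization $1-|h\xi'|_g^2\sim\sigma$ on $T^*H$, i.e.\ a cutoff applied \emph{after} restriction, and the mechanism you propose for proving it --- propagating an Airy parametrix from the hypersurface $\{x_n=ch^{2/3}\}$ where ``\eqref{e:stdEst} applies'' --- does not work as stated: \eqref{e:stdEst} is an estimate for a fixed hypersurface with constants that are not uniform over $h$-dependent hypersurfaces sliding into the glancing region, and it gives $h^{-1/4}$ rather than the needed $\sigma$-dependent bound. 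The paper's route is different and this is its technical heart: it first proves an honest $L^2(M)\to L^2(H)$ bound for the \emph{normal-frequency} band $|\partial_\nu p|\sim\sigma^{1/2}$ (Lemma \ref{lem:normal}, quoting Tacy), and then proves in Lemma \ref{lem:microlocalize}, by a non-stationary-phase argument in the Sj\"ostrand--Zworski second microlocal calculus, that tangential localization at scale $\sigma$ on $H$ agrees with normal-frequency localization at scale $\sigma^{1/2}$ on $M$ up to errors $O((h\sigma^{-3/2})^\infty)$ --- which is exactly where the restriction $\sigma\gg h^{2/3}$ (and $\sigma\gg h$ in the totally bicharacteristic case, Lemma \ref{lem:microlocalizeTotallyBicha}) comes from. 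You gesture at a second microlocal calculus but never supply this equivalence, and without it the dyadic pieces of $G_1^{\rho,s}(1+h^2\Delta_H)u|_H$ cannot be estimated by anything living on $M$.

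Second, your summation step only produces the endpoint bound with the $(\log h^{-1})^{1/2}$ factor (note: summing $O(\log h^{-1})$ pieces each of size $O(1)$ is a \emph{loss} of $(\log h^{-1})^{1/2}$ after Cauchy--Schwarz, not a gain). The second block of the theorem claims the uniform $O(1)$ bound at the sharp exponent $s=1/4$ when $H$ is nowhere tangent to the geodesic flow to infinite order, and this cannot follow from crude summation: the paper gets it from almost orthogonality of the normal-band operators $\tilde T_j$ (Lemma \ref{lem:almostOrthog2}), which in turn rests on the dynamical Lemma \ref{lem:dynamics} asserting that $|\partial_\nu p\circ\Phi_t|\sim|\partial_\nu p|$ at return times to $H$ under the finite-order-tangency hypothesis. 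Your proposal contains no substitute for this ingredient; indeed you appear to read the finite-order-tangency case as having been ``left out'' of the theorem, when in fact it is present and is precisely the case requiring the extra dynamical input. (Minor points: the totally geodesic heuristic and the $\mathrm{Ai},\mathrm{Ai}'$ scaling for the Cauchy data are consistent with the paper, and the deduction of Theorem \ref{thm:laplace} from the quasimode statement, Theorem \ref{thm:general}, via $u=\psi(P/h)u$ is routine.)
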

The power 1/4 in Theorem \ref{thm:laplace} is optimal in the sense that replacing $1/4$ by $s<1/4$ may result in an $L^2$ norm that is not uniformly bounded as $h\to 0$. Moreover, the power $1/4$ is optimal at every scale. In particular, letting $\mu=2/3$ if $H$ is not totally geodesic and $1$ otherwise, for each $0\leq \rho_1<\rho_2< \mu$, we give examples $(H,u_h)$ so that 
$$\|G_1^{\mu,s}(1+h^2\Delta_H)1_{[1-h^{\rho_1},1-h^{\rho_2}]}(-h^2\Delta_H)u_h|_{H}\|_{L^2(H)}\geq Ch^{\rho_2(s-1/4)}.$$
Since $1/4$ in Theorem \ref{thm:laplace} is strictly less than the power $1/2$ in \eqref{eqn:weightedBoundBoundary}, just as with unweighted $L^2$ bounds, weighted $L^2$ bounds are less singular on interior hypersurfaces than on boundaries.
\begin{remark}
We conjecture that for general $H$, 
$$\|G^{1,1/4}(1+h^2\Delta_H)u|_H\|_{L^2(H)}\leq C,$$
but our techniques showing the equivalence of microlocalization on $H$ and microlocalization on $M$ fail at scale $h^{2/3}$ unless $H$ is totally bicharacteristic. 
\end{remark}

More generally, we consider a semiclassical pseudodifferential operator $P$ with real principal symbol, $p(x,\xi).$ 
Let 
$$\Sigma_{x_0}:=\{\xi\mid p(x_0,\xi)=0\}\subset T^*_{x_0}M\}.$$
We assume that
\begin{equation}
\label{eqn:assume}
\begin{gathered} 
p(x_0,\xi_0)=0\quad\imply \quad\partial_\xi p(x_0,\xi_0)\neq 0,\quad\quad\qquad\qquad \lim_{|\xi|_g\to \infty}|p(x,\xi)|=\infty\\
\Sigma_{x_0} \text{ has positive definite second fundamental form and is connected for each }x_0.
\end{gathered}
\end{equation}
\begin{remark}
The assumption that $\Sigma_{x_0}$ be connected is not essential, but we make it to simplify the presentation.
\end{remark}
Furthermore, we say that \emph{$H$ is curved} if the projection of the bicharacteristic flow is at most simply tangent to $H$. That is, for any defining function $r$ for $H$, 
\begin{equation}
\label{e:curved}p(x_0,\xi_0)=r(x_0)=H_pr(x_0,\xi_0)=0\quad \quad \imply \quad\quad H_p^2r(x_0,\xi_0)\neq 0\end{equation}
where $H_p$ denotes the Hamiltonian vector field of $p$. We say that \emph{$H_p$ is tangent to $H$ to infinite order at $(x_0,\xi_0)$} if for all $k>0$,
$$p(x_0,\xi_0)=r(x_0)=H_p^kr(x_0,\xi_0)=0.$$
Finally, let $\Phi_t:T^*M\to T^*M$ be the Hamiltonian flow of $p$ given by $\Phi_t(x,\xi)=\exp(tH_p)(x,\xi)$. We say that \emph{$H$ is totally bicharacteristic near $(x_0,\xi_0)$ if} 
\begin{equation}\label{e:bichar}p(x_0,\xi_0)=r(x_0)=H_pr(x_0,\xi_0)=0\quad\quad\imply \quad\quad  \Phi_t^*r(x_0,\xi_0)\equiv 0\text{ for }t\text{ in a neighborhood of }0.\end{equation}

 Let $\pi:T^*_{H}M\to T^*H$ be given by orthogonal projection and $\nu$ denote a fixed normal to $H$.
Let 
$$\Sigma:=\{p=0\},\quad\quad \mc{G}:=\Sigma\cap\{\partial_\nu p=0\},\quad\quad\Sigma_0:=\pi(\Sigma),\quad\quad \mc{G}_0:=\pi(\mc{G})=\partial \Sigma_0.$$ 
(For the fact that under \eqref{eqn:assume}, $\partial\Sigma_0=\mc{G}_0$, see Section \ref{sec:structure}.)
\begin{defin}
We say that $b\in S^m(T^*H;\re)$ \emph{defines} $\mc{G}_0$ if $b$ is a defining function for $\mc{G}_0$, $b>0$ on $\Sigma_0\setminus \mc{G}_0$, and $|b|>c\la \xi'\ra^m>0$ on $|\xi'|_g\geq M$.  
\end{defin}
Let $\gamma_H:u\mapsto u|_H$ denote the restriction operator.
Theorem \ref{thm:laplace} is then an easy consequence of the following theorem.
\begin{theorem}
\label{thm:general}
Suppose that $H\subset M$ is an interior hypersurface and that $P$ has principle symbol $p$ satisfying \eqref{eqn:assume}. Suppose that $b\in S^m(T^*H)$ defines $\mc{G}_0$. Then there exists $\e>0$ small enough so that for $\psi\in \mc{S}$ with $\psi(0)=1$ and $\supp\hat{\psi}\subset[-\e,\e]$ we have
\begin{gather*} 
\left\|G_1^{2/3-,1/4}(b(x',hD_{x'}))\gamma_H\psi\left(\frac{P}{h}\right)\right\|_{L^2(M)\to L^2(H)}\leq C(\log h^{-1})^{-1/2}\\
\left\|G_1^{2/3,1/4+}(b(x',hD_{x'}))\gamma_H\psi\left(\frac{P}{h}\right)\right\|_{L^2(M)\to L^2(H)}\leq C,\\
\left\|G_1^{2/3-,-1/4}(b(x',hD_{x'}))\gamma_H\partial_\nu p(x,hD)\psi\left(\frac{P}{h}\right)\right\|_{L^2(M)\to L^2(H)}\leq C(\log h^{-1})^{-1/2},\\
\left\|G_1^{2/3,-1/4+}(b(x',hD_{x'}))\gamma_H\partial_\nu p(x,hD)\psi\left(\frac{P}{h}\right)\right\|_{L^2(M)\to L^2(H)}\leq C.
\end{gather*}
If $H$ is nowhere tangent to $H_p$ to infinite order, then 
\begin{gather*} 
\left\|G_1^{2/3-,1/4}(b(x',hD_{x'}))\gamma_H\psi\left(\frac{P}{h}\right)\right\|_{L^2(M)\to L^2(H)}+\left\|G_1^{2/3,1/4+}(b(x',hD_{x'}))\gamma_H\psi\left(\frac{P}{h}\right)\right\|_{L^2(M)\to L^2(H)}\leq C,\\
\left\|G_1^{2/3-,-1/4}(b(x',hD_{x'}))\gamma_H\partial_\nu p(x,hD)\psi\left(\frac{P}{h}\right)\right\|_{L^2(M)\to L^2(H)}\leq C,\\
\left\|G_1^{2/3,-1/4+}(b(x',hD_{x'}))\gamma_H\partial_\nu p(x,hD)\psi\left(\frac{P}{h}\right)\right\|_{L^2(M)\to L^2(H)}\leq C,
\end{gather*} 
and if $H$ is totally bicharacteristic, then 
\begin{gather*} 
\left\|G_1^{1-,1/4}(b(x',hD_{x'}))\gamma_H\psi\left(\frac{P}{h}\right)\right\|_{L^2(M)\to L^2(H)}+\left\|G_1^{1,1/4+}(b(x',hD_{x'}))\gamma_H\psi\left(\frac{P}{h}\right)\right\|_{L^2(M)\to L^2(H)}\leq C,\\
\left\|G_1^{1-,-1/4}(b(x',hD_{x'}))\gamma_H\partial_\nu p(x,hD)\psi\left(\frac{P}{h}\right)\right\|_{L^2(M)\to L^2(H)}\leq C,\\
\left\|G_1^{1,-1/4+}(b(x',hD_{x'}))\gamma_H\partial_\nu p(x,hD)\psi\left(\frac{P}{h}\right)\right\|_{L^2(M)\to L^2(H)}\leq C.
\end{gather*} 
Furthermore, the power $1/4$ is sharp in the sense for for any power less than $1/4$, examples exists where these operators are not uniformly bounded in $h$.
\end{theorem}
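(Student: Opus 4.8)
The plan is to localize microlocally near the glancing set $\mc{G}$, pass to a glancing normal form there, decompose dyadically in the distance to $\mc{G}$, estimate each dyadic shell by an Airy-type model, and reassemble by almost orthogonality; the power $1/4$ and the scales $h^{2/3}$, $h$ will emerge from the model. First I would take a microlocal partition of unity on $T^*M$ separating the elliptic region $\{p\neq 0\}$, the part of $\Sigma$ away from $\mc{G}$, and a small neighborhood of $\mc{G}$. On the elliptic set $\psi(P/h)=O(h^\infty)$ since $\psi\in\mc{S}$ and $|p|$ is bounded below there, so that piece is negligible. On $\Sigma\setminus\mc{G}$ the bicharacteristic flow is transverse to $H$, the standard transverse trace (Rellich-type) estimate gives $\|\gamma_H\psi(P/h)\|_{L^2(M)\to L^2(H)}=O(1)$, and $b$ is bounded above and below on $\Sigma_0\setminus\mc{G}_0$ so $G_1^{\rho,s}(b)$ is harmless there; hence this piece contributes nothing new. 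This reduces matters to a conic neighborhood of a point of $\mc{G}$, where by \eqref{eqn:assume} ($\partial_\xi p\neq 0$ and $\Sigma_{x_0}$ curved, so at most simple tangency) one can choose coordinates with $H=\{x_1=0\}$ and conjugate $P$ microlocally to an Airy model $(hD_{x_1})^2+x_1Q(x',hD_{x'})+hR$ with $Q$ tangential and elliptic; when $H$ is totally bicharacteristic, $r=x_1$ is preserved by $\Phi_t$, which removes more of the error $R$ and is what lets the rest of the argument run down to scale $h$ rather than only $h^{2/3}$.

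Next I would install the equivalence between microlocalizing on $H$ via $b(x',hD_{x'})$ (equivalently via $G_i^{\rho,s}(b)$) and microlocalizing on $M$ near $\mc{G}$, and run it at dyadic scales $\lambda\in[h^\mu,1]$, where $\mu=2/3$ in general and $\mu=1$ when $H$ is totally bicharacteristic. Under $\pi\colon T^*_HM\to T^*H$ the defining function $b$ for $\mc{G}_0$ pulls back on $\Sigma$ to a quantity comparable to the squared distance to $\mc{G}$, so $1_{[\lambda,2\lambda]}(b(x',hD_{x'}))\gamma_H\psi(P/h)$ should agree, up to conjugation and $O(h^\infty)$, with the restriction to $\{x_1=0\}$ of $\psi(P/h)$ microlocalized to the shell where the squared distance to $\mc{G}$ is $\sim\lambda$. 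Simple tangency forces the normal displacement of a bicharacteristic to grow like $t^2$, so traversing a shell of width $\lambda$ takes time $\sim\lambda^{1/2}$ with displacement $\sim\lambda$; this is self-consistent precisely for $\lambda\geq h^{2/3}$, which is the source of that scale (and of its improvement to $h$ when $r$ is flow-invariant). In the model, $\psi(P/h)$ microlocalized to squared distance $\sim\lambda$ from glancing and restricted to $\{x_1=0\}$ is governed by a rescaled Airy function, giving $\|1_{[\lambda,2\lambda]}(b(x',hD_{x'}))\gamma_H\psi(P/h)\|_{L^2(M)\to L^2(H)}\lesssim\lambda^{-1/4}$; since $G_1^{\mu,1/4}(b)\sim\lambda^{1/4}$ on that shell, each dyadic piece contributes $O(1)$. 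For the normal-derivative statements one uses that $\partial_\nu p$ vanishes on $\mc{G}$ with $|\partial_\nu p|\sim b^{1/2}$ nearby, so $\partial_\nu p(x,hD)$ contributes $\lambda^{1/2}$ on the shell and $G_1^{\mu,-1/4}(b)\sim\lambda^{-1/4}$, again netting $O(1)$.

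Finally I would sum the $O(\log h^{-1})$ dyadic pieces. The spectral projections $1_{[\lambda,2\lambda]}(b(x',hD_{x'}))$ are mutually orthogonal and their $M$-side counterparts are almost orthogonal, so a Cotlar--Stein argument controls the sum; the leftover logarithmic factors are absorbed either by the extra $\e$ in the exponent, yielding the clean $O(1)$ bounds for $G_1^{\mu,1/4+}$ since the worst shell then carries a spare $\lambda^\e\leq h^{\mu\e}$, or, when the critical scale $h^\mu$ is excised (the $G_1^{\mu-,1/4}$ bounds), by arranging the summation so as to produce the stated decay $(\log h^{-1})^{-1/2}$. The main obstacle is precisely this $H$-versus-$M$ microlocalization equivalence pushed all the way to the critical scale $h^{2/3}$ (or $h$): there the Airy model sits in its transition regime and the second-microlocal calculus has no uniform symbol classes, so establishing the per-shell bound uniformly down to $\lambda\sim h^\mu$, and tracking the logarithmic losses finely enough to pin the exponent, is the technical heart --- and this is exactly where the method fails for general $H$ at scale $h^{2/3}$, as already noted. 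For sharpness of $1/4$ at every scale I would use Bessel eigenfunctions $J_m(kr)e^{im\theta}$ on the disk restricted to a circle $\{r=r_0\}$: choosing $m$ so that $1-h^2m^2/r_0^2\sim\lambda$ puts the restriction in the oscillatory Bessel regime when $\lambda\gg h^{2/3}$, and the uniform Bessel asymptotics give $\|u_h|_H\|_{L^2(H)}\sim\lambda^{-1/4}$ for the $L^2(M)$-normalized eigenfunction, so $\|G_1^{\mu,s}(1+h^2\Delta_H)1_{[1-h^{\rho_1},1-h^{\rho_2}]}(-h^2\Delta_H)u_h|_H\|_{L^2(H)}\gtrsim h^{\rho_2(s-1/4)}$, which is unbounded for $s<1/4$; highest-weight spherical harmonics restricted to great circles furnish the examples in the totally geodesic case at scale $h$.
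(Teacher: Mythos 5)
Your skeleton (elliptic/hyperbolic/glancing splitting, dyadic shells in the distance to glancing, a $\lambda^{-1/4}$ per-shell bound, Cotlar--Stein reassembly, Bessel functions and spherical harmonics for sharpness) matches the paper's, and your sharpness examples are exactly the ones used. But there are two genuine gaps. First, your reduction to an Airy normal form rests on the claim that \eqref{eqn:assume} forces at most simple tangency of $H_p$ to $H$. That is false: \eqref{eqn:assume} constrains the fiber $\Sigma_{x_0}$, not the contact of the bicharacteristics with $H$; simple tangency is the separate hypothesis \eqref{e:curved}, which Theorem \ref{thm:general} does not assume (indeed the theorem explicitly treats infinite-order tangency and the totally bicharacteristic case, where no Airy model exists). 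The paper avoids this entirely: the per-shell bound comes from Tacy's estimate on normal frequency bands (Lemma \ref{lem:normal}), i.e.\ localization in $|\partial_\nu p(x,hD)|\sim\tilde h^{1/2}$ on $M$, valid with no tangency hypothesis, and the relation to shells in $b$ on $H$ is supplied by the structure result $(\partial_\nu p)^2=eb$ on $\Sigma$ (Lemma \ref{lem:bdfForm}).

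Second, the step you yourself flag as ``the technical heart'' --- that $1_{[\lambda,2\lambda]}(b(x',hD_{x'}))\gamma_H\psi(P/h)$ agrees up to negligible errors with the restriction of $\psi(P/h)$ microlocalized to $|\partial_\nu p|\sim\lambda^{1/2}$, uniformly down to $\lambda\sim h^{2/3}$ (resp.\ $h$) --- is asserted, not proved, and it is precisely the content of Lemmas \ref{lem:microlocalize} and \ref{lem:microlocalizeTotallyBicha}, proved by a delicate non-stationary-phase argument in the second microlocal calculus of Section \ref{sec:second}; the threshold $h^{2/3}$ arises there from the gain $h\tilde h^{-3/2}$ per integration by parts, not from your time-of-flight heuristic. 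Relatedly, your Cotlar--Stein step presumes the $M$-side operators are almost orthogonal, but this requires the dynamical Lemma \ref{lem:dynamics} comparing $|\partial_\nu p\composed\Phi_t|$ with $|\partial_\nu p|$ at returns to $H$, which holds only when $H_p$ is nowhere tangent to infinite order or $H$ is totally bicharacteristic; its failure for general $H$ is the actual source of the logarithmic corrections and of the case distinctions in the statement, not the excision of the critical scale.
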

Combining Theorem \ref{thm:general} with the the analog of the estimates \eqref{e:stdEst} for quasimodes gives
\begin{corol}
Let $H$ be an interior hypersurface. Then if $H$ is curved, 
$$\left\|G^{2/3,1/4+}(b(x',hD_{x'}))\gamma_H\psi\left(\frac{P}{h}\right)\right\|_{L^2(M)\to L^2(H)}\leq C,$$ 
and if $H$ is totally bicharacteristic, 
$$\left\|G^{1,1/4+}(b(x',hD_{x'}))\gamma_H\psi\left(\frac{P}{h}\right)\right\|_{L^2(M)\to L^2(H)}\leq C.$$ 
\end{corol}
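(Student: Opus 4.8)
The plan is to write $G^{\mu,1/4+}=G_1^{\mu,1/4+}+G_2^{\mu,1/4+}$, with $\mu=2/3$ when $H$ is curved and $\mu=1$ when $H$ is totally bicharacteristic, and to bound the two summands separately. The summand with $G_1^{\mu,1/4+}$ is exactly what Theorem \ref{thm:general} controls: if $H$ is curved then \eqref{e:curved} forces $H_p$ to be at most simply tangent to $H$, so $H$ is in particular nowhere tangent to $H_p$ to infinite order, and the corresponding block of estimates in Theorem \ref{thm:general} gives $\|G_1^{2/3,1/4+}(b(x',hD_{x'}))\gamma_H\psi(P/h)\|_{L^2(M)\to L^2(H)}\leq C$; if $H$ is totally bicharacteristic, the last block of estimates in Theorem \ref{thm:general} gives the same bound with $\mu=1$. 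So I only need to control the low-frequency piece $G_2^{\mu,1/4+}(b(x',hD_{x'}))\gamma_H\psi(P/h)$.

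For that piece I would observe that $G_2^{\mu,1/4+}(\sigma)=h^{(1/4+\e)\mu}\chi_2(\sigma/h^\mu)$ is a bounded function of $\sigma$ of size $O(h^{(1/4+\e)\mu})$, so --- taking $b(x',hD_{x'})$ to be self-adjoint, as we may --- the spectral theorem gives
\[
\big\|G_2^{\mu,1/4+}(b(x',hD_{x'}))\big\|_{L^2(H)\to L^2(H)}\leq h^{(1/4+\e)\mu}\sup_\sigma|\chi_2(\sigma)|\leq Ch^{(1/4+\e)\mu}.
\]
Composing with the crude restriction bound --- the quasimode analog of \eqref{e:stdEst}, namely $\|\gamma_H\psi(P/h)\|_{L^2(M)\to L^2(H)}\leq Ch^{-1/6}$ when $H$ is curved and $\leq Ch^{-1/4}$ in general, which is available because $\supp\hat\psi$ lies in a small interval --- I obtain
\[
\big\|G_2^{\mu,1/4+}(b(x',hD_{x'}))\gamma_H\psi(P/h)\big\|_{L^2(M)\to L^2(H)}\leq Ch^{(1/4+\e)\mu-1/6}
\]
in the curved case ($\mu=2/3$), and $\leq Ch^{(1/4+\e)\mu-1/4}$ with $\mu=1$ in the totally bicharacteristic case. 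Since $(1/4)(2/3)=1/6$ and $(1/4)(1)=1/4$, in both cases the exponent equals $\e\mu>0$, so the $G_2$ piece is $O(h^{\e\mu})=O(1)$, and combining with the $G_1$ bound finishes the proof.

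I do not expect a genuine obstacle: all of the analytic content sits in Theorem \ref{thm:general}, and the role of the corollary is to observe that the window widths $h^{2/3}$ and $h$ are chosen precisely so that the $h^{\mu/4}$ gain carried by $G_2$ exactly absorbs the $h^{-1/6}$, resp.\ $h^{-1/4}$, loss of the unrefined restriction estimate --- so no microlocalization near $\mc{G}_0$ is needed on the $G_2$ side, and indeed the unshifted exponent $1/4$ would already do there; the ``$+$'' is used only to invoke Theorem \ref{thm:general} for the $G_1$ piece. The only points I would take a moment to justify are that ``curved'' implies the hypothesis ``$H$ nowhere tangent to $H_p$ to infinite order'' needed to apply Theorem \ref{thm:general}, that $b(x',hD_{x'})$ has a self-adjoint realization so that $G_2^{\mu,1/4+}(b(x',hD_{x'}))$ is the operator furnished by the usual functional calculus, and that the quasimode version of \eqref{e:stdEst} indeed holds under \eqref{eqn:assume} for the cutoff $\psi(P/h)$ with $\supp\hat\psi$ small.
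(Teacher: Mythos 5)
Your proposal is correct and is exactly the argument the paper intends: the paper proves the corollary by "combining Theorem \ref{thm:general} with the analog of \eqref{e:stdEst} for quasimodes," i.e.\ precisely your splitting $G^{\mu,1/4+}=G_1^{\mu,1/4+}+G_2^{\mu,1/4+}$, with Theorem \ref{thm:general} handling the $G_1$ piece and the prefactor $h^{(1/4+\e)\mu}$ in $G_2$ absorbing the $h^{-1/6}$ (curved) or $h^{-1/4}$ (general) loss in the unrefined restriction bound. The only cosmetic remark is that the second estimate in the first block of Theorem \ref{thm:general} already bounds $G_1^{2/3,1/4+}$ for an arbitrary interior hypersurface, so you do not even need to verify that curvature rules out infinite-order tangency.
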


Finally, we give an application of our estimates to quantum ergodic restriction theorems. We say that a sequence of eigenfunctions of the Laplacian, $u_h$, is \emph{quantum ergodic} if for all $A\in \Ph{}{}(M)$, 
$$\la Au_h,u_h\ra\underset{h\to 0}{\longrightarrow}\frac{1}{\mu_L(S^*M)}\int_{S^*M}\sigma(A)(x,\xi)d\mu_L$$
where $\mu_L$ is the Liouville measure on $S^*M$. By the now classical quantum ergodicity theorem of Shnirleman \cite{Snir}, Colin de Verdi{\`e}re \cite{dever}, Zelditch \cite{zeld}, and Zelditch--Zworski \cite{ZeZw}, if the (broken) geodesic flow on $M$ is ergodic, than there is a full density subsequence of eigenfunctions which is quantum ergodic. 

More recently, there has been interest in quantum ergodic properties of restrictions of eigenfunctions. Dyatlov--Zworski \cite{DyZw}, and Toth--Zelditch \cite{TZ1,TZ2} showed that, under an asymmetry condition on $H$, there is a further full density subsequence of $u_h$, such that for $A\in \Ph{}{}(H)$,
\begin{equation}
\label{e:asymmetry}
\la A u_h|_H,u_h|_H\ra \to \frac{2}{\mu_L(S^*M)}\int_{B^*H}\sigma(A)(x,\xi')(1-|\xi'|_g^2)^{-1/2}dxd\xi'.\end{equation}

Moreover, Christianson--Toth--Zelditch \cite{CTZ} show that without the need to make an additional asymmetry condition or to take a further full density subsequence 
\begin{equation}
\label{e:cauchy}\la Ah\partial_{\nu_H} u_h|_H,h\partial_{\nu_H} u_h|_H\ra +\la (1+h^2\Delta_H)Au_h|_H,u_h|_H\ra \to \frac{4}{\mu_L(S^*H)}\int_{B^*H}\sigma(A)\sqrt{1-|\xi'|_g^2}dxd\xi'.\end{equation}
One should notice that there is an extra factor of $(1+h^2\Delta_H)$ in the second term of \eqref{e:cauchy} when compared to \eqref{e:asymmetry}. This is due to the fact that (even quantum ergodic) eigenfunctions may have bad concentration properties near trajectories tangent to the hypersurface $H$. However, Theorem \ref{thm:general} gives us uniform control over how bad this concentration may be and as a consequence, we can reduce the number of factors of $(1+h^2\Delta_H)$ required.
\begin{theorem}
\label{thm:QER}
Suppose that $u_h$ is quantum ergodic and $A\in \Ph{}{}(H)$. Then for all $s<1/2$, 
\begin{multline*} 
\la G_1^{2/3,-s}(1+h^2\Delta_H)Ah\partial_{\nu_H} u_h,h\partial_{\nu_H} u_h\ra +\la G_1^{2/3,1-s}(1+h^2\Delta_H) u_h|_H,u_h|_H\ra\\\ \to \frac{4}{\mu_L(S^*M)}\int_{B^*H}\sigma(A)(x,\xi')(1-|\xi'|_g^2)^{1/2-s}dxd\xi'.\end{multline*}
\end{theorem}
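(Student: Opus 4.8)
The plan is to reduce to the Cauchy‑data quantum ergodicity formula \eqref{e:cauchy} of \cite{CTZ} away from the glancing set, absorbing the singular weight into a genuine test operator there, and to control the near‑glancing remainder by brute force using Theorem~\ref{thm:general}. Write $v_h:=h\partial_{\nu_H}u_h|_H$ and apply Theorem~\ref{thm:general} with $P=-h^2\Delta_g-1$ (so $p=|\xi|_g^2-1$) and $b:=1-|\xi'|_g^2$, which defines $\mc{G}_0$ and quantizes to $1+h^2\Delta_H$; since $Pu_h=0$ we have $\psi(P/h)u_h=\psi(0)u_h=u_h$ and $v_h=\tfrac{i}{2}\gamma_H\,\partial_\nu p(x,hD)\,\psi(P/h)u_h$, so Theorem~\ref{thm:general} provides the two inputs
\[
\|G_1^{2/3,1/4+}(1+h^2\Delta_H)u_h|_H\|_{L^2(H)}\le C,\qquad \|G_1^{2/3,-1/4+}(1+h^2\Delta_H)v_h\|_{L^2(H)}\le C .
\]
Fix $\eta>0$ and $\chi_\eta\in\CIc(\re)$ with $\chi_\eta\equiv1$ on $[-\eta,\eta]$ and $\supp\chi_\eta\subset[-2\eta,2\eta]$. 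Since $\chi_\eta(1+h^2\Delta_H)$ commutes with $G_1^{2/3,-s}(1+h^2\Delta_H)$, inserting $1=(1-\chi_\eta(1+h^2\Delta_H))+\chi_\eta(1+h^2\Delta_H)$ in front of $A$ splits each of the two terms in the theorem into a ``far'' and a ``near'' piece, and it suffices to show that the near piece has $\limsup_{h\to0}$ bounded by $C\eta^{\alpha}$ for some $\alpha>0$; the conclusion then follows by letting $h\to0$ and then $\eta\to0$, together with the evaluation of the far piece below.

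For the far piece, once $2h^{2/3}<\eta$ the cutoff $\chi_1((1-|\xi'|_g^2)/h^{2/3})$ equals $1$ on $\supp(1-\chi_\eta(1+h^2\Delta_H))$, so $\tilde A:=G_1^{2/3,-s}(1+h^2\Delta_H)\bigl(1-\chi_\eta(1+h^2\Delta_H)\bigr)A=(1+h^2\Delta_H)_+^{-s}\bigl(1-\chi_\eta(1+h^2\Delta_H)\bigr)A\in\Ph{0}{}(H)$ has $h$‑independent principal symbol $(1-|\xi'|_g^2)_+^{-s}\bigl(1-\chi_\eta(1-|\xi'|_g^2)\bigr)\sigma(A)$, which is smooth because $1-|\xi'|_g^2\ge\eta>0$ on its support; moreover $(1+h^2\Delta_H)\tilde A=G_1^{2/3,1-s}(1+h^2\Delta_H)\bigl(1-\chi_\eta(1+h^2\Delta_H)\bigr)A$. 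Hence the sum of the two far pieces is exactly $\langle\tilde Av_h,v_h\rangle+\langle(1+h^2\Delta_H)\tilde Au_h|_H,u_h|_H\rangle$, and \eqref{e:cauchy} applied to $\tilde A$ shows that as $h\to0$ it converges to $\tfrac{4}{\mu_L(S^*M)}\int_{B^*H}\bigl(1-\chi_\eta(1-|\xi'|_g^2)\bigr)\sigma(A)(x,\xi')(1-|\xi'|_g^2)^{1/2-s}\,dx\,d\xi'$. Since $1/2-s>-1$, the integrand is dominated by the integrable function $|\sigma(A)|(1-|\xi'|_g^2)^{1/2-s}$ on $B^*H$, so letting $\eta\to0$ recovers the claimed limit by dominated convergence.

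For the near piece, fix $\e>0$ with $s<\tfrac12-2\e$. On the ``clean'' range $1-|\xi'|_g^2\in[2h^{2/3},2\eta]$, where $\chi_1((1-|\xi'|_g^2)/h^{2/3})=1$, the weights satisfy $(1-|\xi'|_g^2)^{-s}\chi_\eta\le(2\eta)^{1/2-s-2\e}\bigl(G_1^{2/3,-1/4+\e}\bigr)^2$ and $(1-|\xi'|_g^2)^{1-s}\chi_\eta\le(2\eta)^{1/2-s-2\e}\bigl(G_1^{2/3,1/4+\e}\bigr)^2$ (this is exactly where $s<\tfrac12$ enters), so by Cauchy--Schwarz, the $L^2$‑boundedness of $A$, the negligible commutator errors, and the two inputs above, the clean part of the near piece is $O(\eta^{1/2-s-2\e})$. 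The hard part will be the remaining, innermost contribution, from the slab $1-|\xi'|_g^2\in[h^{2/3},2h^{2/3}]$, where the regularizing cutoff $\chi_1(\cdot/h^{2/3})$ degenerates: there the weights are only $O(h^{-2s/3})$ and $O(h^{2(1-s)/3})$ but are not dominated by bounded multiples of $\bigl(G_1^{2/3,\pm1/4+\e}\bigr)^2$, so the final statement of Theorem~\ref{thm:general} does not by itself bound the spectral mass of $u_h|_H$ and $v_h$ on this $O(h^{2/3})$‑slab. What I expect to use there are the natural scale‑$h^{2/3}$ bounds
\[
\bigl\|\mathbf{1}_{[1-2h^{2/3},\,1-h^{2/3}]}(-h^2\Delta_H)\,u_h|_H\bigr\|_{L^2(H)}\lesssim h^{-1/6},\qquad \bigl\|\mathbf{1}_{[1-2h^{2/3},\,1-h^{2/3}]}(-h^2\Delta_H)\,v_h\bigr\|_{L^2(H)}\lesssim h^{1/6},
\]
which are the scaling underlying Theorem~\ref{thm:general} and come out of the dyadic estimates in its proof; granting these, the innermost contribution is $O(h^{1/3-2s/3})\to0$ since $s<\tfrac12$, and combining the three pieces completes the argument. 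As in \cite{CTZ}, no asymmetry hypothesis on $H$ is required because the two terms together form the Cauchy data, whose cross terms cancel.
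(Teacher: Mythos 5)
Your proposal follows essentially the same route as the paper: split at a fixed distance $\eta$ (the paper's $\delta$) from the glancing set, feed the far piece into the Cauchy-data QER formula \eqref{e:cauchy} of \cite{CTZ} with the smooth test operator $\tilde A$, and control the near piece by extracting a factor $\eta^{1/2-s-}$ from the weight and invoking the $1/4+$ bounds of Theorem \ref{thm:general} via Cauchy--Schwarz. The one place you leave a genuine hole is the transition slab $1-|\xi'|_g^2\in[h^{2/3},2h^{2/3}]$: you correctly observe that $\sigma^{-s}\chi_1(\sigma/h^{2/3})$ is not dominated by $C\eta^{1/2-s-}\bigl(G_1^{2/3,-1/4+\e}(\sigma)\bigr)^2$ there (the same $\chi_1$ appears squared on the right and vanishes at the left edge), but the slab bounds $\|\mathbf{1}_{[1-2h^{2/3},1-h^{2/3}]}(-h^2\Delta_H)u_h|_H\|\lesssim h^{-1/6-}$ and $\lesssim h^{1/6-}$ for $h\partial_\nu u_h$ that you propose to ``grant'' are not stated anywhere and should not be pulled from inside the proof of Theorem \ref{thm:general}. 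The clean fix --- and the paper's actual device --- is to introduce $\tilde{\chi}_1$ with $\tilde{\chi}_1\chi_1=\chi_1$ and $\supp\tilde{\chi}_1\subset[1/2,\infty)$: then $G_1^{2/3,-s}\le C\eta^{1/2-s-}\bigl(\tilde G_1^{2/3,-1/4+\e}\bigr)^2$ holds on the \emph{entire} near region including the slab, and $\tilde G_1^{2/3,\pm 1/4+\e}$ is again of the form covered by Theorem \ref{thm:laplace} (it is $G_1$ at scale $h^{2/3}/2$), so no separate treatment of the slab is needed; this same observation also yields the slab bounds you wanted, up to an $h^{0-}$ loss. With that substitution (and a quantitative version of your ``negligible commutator errors,'' which the paper records as $\O{L^2\to L^2}(h^{1-2/3(1-\min(\beta_1,0)-\min(\beta_2,0))})$), your argument closes and coincides with the paper's.
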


\subsection{Outline of the proof of Theorem \ref{thm:general}}
To prove Theorem \ref{thm:general}, we start by proving estimates on restrictions of normal frequency bands of $\psi(P/h)$. In particular, let $\nu$ be a fixed conormal to $H$. Then we use \cite{T14} to obtain estimates on
\begin{equation}
\label{eqn:est1}\left\|\gamma_H\chi\left(\frac{\partial_\nu p(x,hD)}{\tilde{h}}\right)\psi\left(\frac{P}{h}\right)\right\|_{L^2(M)\to L^2(H)}.
\end{equation}
Observe that
$$P\psi\left(\frac{P}{h}\right)=\O{L^2\to L^2}(h)$$ 
and since $\{\xi\mid p(x_0,\xi)=0\}$ is compact for all $x_0$, there exist $\chi \in \Cc(\re)$ such that 
$$(1-\chi(|hD|_g))\psi\left(\frac{P}{h}\right)=\O{C^\infty}(h^\infty).$$ 
Therefore, to obtain the estimates on \eqref{eqn:est1}, we need only prove estimates for quasimodes, $u$ such that $u$ is compactly microlocalized, $\|u\|_{L^2(M)}\leq 1$, and $Pu=\O{L^2}(h)$. 

Our next task is to give restriction estimates on normal frequency bands of $u$. In particular, let $\chi \in \Cc(\re)$ with $\supp \chi \subset [1/2,4]$. Using \cite[Proposition 1.1]{T14}, we show that for $\tilde{h}\gg h$,
\begin{equation}
\label{eqn:cutEst}\gamma_H\chi\left(\frac{\partial_\nu p(x,hD)}{\tilde{h}^{1/2}}\right)u=\O{L^2(H)}(\tilde{h}^{-1/4}).\end{equation}

To deduce Theorem \ref{thm:general} from \eqref{eqn:cutEst}, we need to show that for a quasimode of $P$, microlocalization at scale $\tilde{h}^{1/2}$ away from $\mc{G}$ in the ambient manifold passes to $\tilde{h}$ microlocalization away from the $\mc{G}_0$ after composition with $\gamma_H$. Because of the square root singularity in $\pi:\Sigma\to \Sigma_0$ near $\mc{G}_0$, we need to use the second microlocal calculus from \cite{SjoZwDist,SjZwFrac}. More precisely, we show that for $\chi_\nu\in \Cc(\re)$ with $\supp \chi_\nu \subset [1,2]$, there exists $\chi\in \Cc(\re)$ such that 
\begin{equation}\label{eqn:cut1}\left(1-\chi\left(\frac{b(x',hD_{x'})}{\tilde{h}}\right)\right)\gamma_H\chi_\nu\left(\frac{|\partial_\nu p(x,hD)|}{\tilde{h}^{1/2}}\right)\psi\left(\frac{P}{h}\right)\end{equation}
is negligible (see Figure \ref{f:micStruc} for a schematic view of the various microsupports). This will only be possible when $\tilde{h}\gg h^{2/3}$ unless $H$ is totally bicharacteristic. 
Finally, to complete the proof of Theorem \ref{thm:general}, we use an almost orthogonality argument. 

\begin{figure}[htbp]
\includegraphics[width=.8\textwidth]{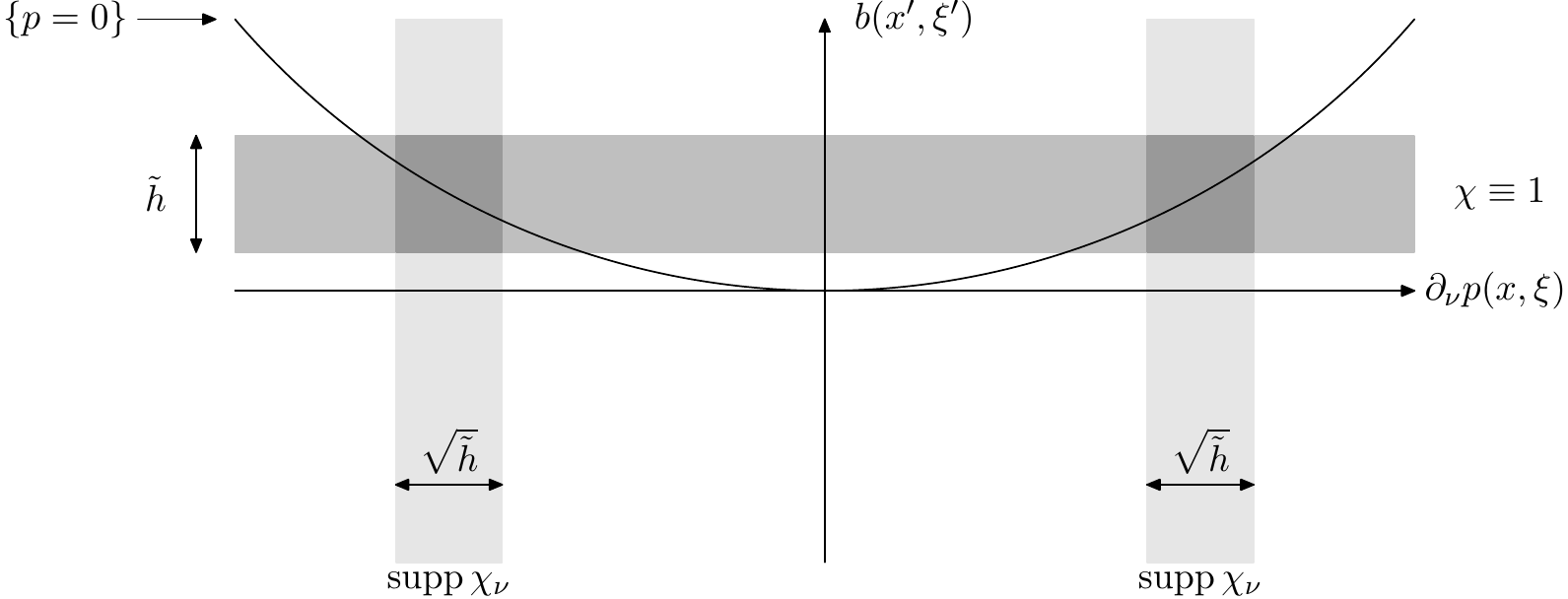}
\caption{\label{f:micStruc}The figure shows the supports of the various pseudodifferential cutoffs in \eqref{eqn:cut1}. }
\end{figure}

\subsection{Organization of the paper}
In Section \ref{sec:prelim}, we review some facts from the second microlocal calculus of \cite{SjoZwDist, SjZwFrac}.
In Section \ref{sec:normal}, we adapt Tacy's methods \cite{T14} for our purposes. Next, in Section \ref{sec:structure}, we examine the geometry of $\mc{G}_0$, $\mc{G}$, $\Sigma$, and $\Sigma_0$ for general Hamiltonians $p$. Then, in Section \ref{sec:microlocalize}, we prove that small scale microlocalization in $T^*M|_H$ away from $\mc{G}$ passes to small scale microlocalization in $T^*H$ away from $\mc{G}_0$.  Next, in Section \ref{sec:final}, we complete the proof of the main theorem. In Section \ref{sec:optimal}, we show that the power $1/4$ cannot be improved. Finally, in Section \ref{sec:QER}, we prove Theorem \ref{thm:QER} as an application for our estimates.

\vspace{1cm}
\noindent
{\sc Acknowledgemnts.} The author would like to thank Suresh Eswarathasan for the many stimulating discussions that started this project and for his careful reading of an earlier version of this paper. Thanks also to John Toth, Andras Vasy, and Maciej Zworski for valuable suggestions. The author is grateful to the National Science Foundation for support under the Mathematical Sciences Postdoctoral Research Fellowship  DMS-1502661.

\section{Second microlocalization at a hypersurface}
\label{sec:prelim}
\label{sec:second}
In this section, we review the necessary results from the second microlocal calculus associated to a hypersurface from \cite{SjoZwDist, SjZwFrac} where one can find more details. Throughout, let $(M,g)$ be a compact Riemannian manifold of dimension $d$ with $T^*M$ its cotangent bundle. 
\subsection{The basic calculus}
Here, we collect some facts from the standard semiclassical calculus (see \cite[Chapter 4]{EZB}, \cite[Chapter 7]{d-s} for more details). We first introduce symbol classes. For $\delta\leq 1/2$,
$$S^{m}_\delta(T^*M):=\{a\in C^\infty(T^*M)\mid |\partial_x^\alpha\partial_{\xi}^\beta a|\leq C_{\alpha\beta}h^{-\delta(|\alpha|+|\beta|)}\la \xi\ra^{m-|\beta|}\}$$
where $\la \xi\ra:=(1+|\xi|_g^2)^{1/2}.$
We also define an $\tilde{h}$ class of symbols when $\delta=1/2$. 
$$S^{m}_{1/2,\tilde{h}}(T^*M):=\{a\in C^\infty(T^*M)\mid |\partial_x^\alpha\partial_{\xi}^\beta a|\leq C_{\alpha\beta}h^{-\frac{1}{2}(|\alpha|+|\beta|)}\tilde{h}^{|\alpha|+|\beta|}\la \xi\ra^{m-|\beta|}\}.$$

Then, the corresponding $h$-Weyl pseudodifferential operators are operators that in local coordinates have Schwartz kernels of the form
$$K_a(x,y)=\frac{1}{(2\pi h)^{d}}\int e^{\frac{i}{h}\la x-y,\xi\ra}a\left(\frac{x+y}{2},\xi;h\right)d\xi.$$
Here, the integral is defined as an oscillatory integral (see \cite[Section 3.6]{EZB}). We write $\oph_{\re^d}(a)$ for the operator with Schwartz kernel $K_a$. 

Then we have the following lemma in local coordinates \cite[Theorems 4.11,4.12,9.5]{EZB}
\begin{lemma}
\label{lem:pseudCompose}
For $0\leq \delta\leq 1/2$, $a\in S^{m_1}_{\delta}(T^*\re^d)$, and $b\in S^{m_2}_{\delta}(T^*\re^d)$
$$\oph_{\re^d}(a)\oph_{\re^d}(b)=\oph_{\re^d}(c)$$
where
$$c=\left.e^{ih\sigma(D_x,D_\xi,D_y,D_\eta)/2}a(x,\xi)b(y,\eta)\right|_{\substack{x=y\\\xi=\eta}}\in S^{m_1+m_2}_\delta(T^*\re^d).$$
Moreover, for $\delta<1/2$, or $a\in S^{m_1}_{1/2,\tilde{h}}$, $b\in S^{m_2}_{1/2,\tilde{h}},$ $c$ has an asymptotic expansion
$$c\sim \sum_j \frac{i^jh^j}{j!2^j}\left.\left[(\sigma(D_x,D_\xi,D_y,D_\eta))^j(a(x,\xi)b(y,\eta))\right]\right|_{\substack{x=y\\\xi=\eta}}.$$
In particular, if $\supp a\cap \supp b=\emptyset$, then
$$c=\O{}( \tilde{h}^{\infty}\la \xi\ra^{-\infty}),\quad\quad c=\O{}(h^\infty\la \xi\ra^{-\infty}).$$
respectively for $a,b\in S^*_{1/2,\tilde{h}}$ and $a, b\in S^*_\delta$ for $\delta<1/2$. 
\end{lemma}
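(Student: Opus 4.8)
The final statement to prove is Lemma \ref{lem:pseudCompose}, the composition formula for semiclassical pseudodifferential operators (including the $S^m_\delta$ and $S^m_{1/2,\tilde h}$ classes), together with the disjoint-support negligibility conclusion. Here is the plan.

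\medskip

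\textbf{Step 1: The exact composition formula via the Fourier transform.} I would start from the oscillatory-integral definition of the Schwartz kernels $K_a, K_b$ and compute the kernel of $\oph_{\re^d}(a)\oph_{\re^d}(b)$ directly. Writing everything on the Fourier transform side, the composed kernel is again of the form $K_c$ with
$$c(x,\xi) = \frac{1}{(2\pi h)^d}\int e^{-\frac{i}{h}\langle z,\zeta\rangle}\, a\Big(x+\tfrac{z}{2},\xi+\zeta\Big)\, b\Big(x-\tfrac{z}{2},\xi\Big)\dots$$
(up to the usual symmetrization of the Weyl quantization), which after a change of variables is exactly the action of $e^{ih\sigma(D_x,D_\xi,D_y,D_\eta)/2}$ on $a(x,\xi)b(y,\eta)$ restricted to the diagonal. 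This is the content of \cite[Theorem 4.11 or 9.5]{EZB}; I would cite that computation rather than redo it, and focus on the symbol-class bookkeeping. The point to verify carefully is that the oscillatory integral converges and defines a smooth function, which is standard nonstationary-phase / integration-by-parts in $(z,\zeta)$.

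\medskip

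\textbf{Step 2: The symbol estimates $c\in S^{m_1+m_2}_\delta$.} Here the hard part of the lemma lies. One must show the exact-formula symbol $c$ satisfies the $S^{m_1+m_2}_\delta$ estimates. The standard route is to write $c = e^{ih\sigma/2}(a\otimes b)\big|_{\Delta}$ and estimate via the stationary phase lemma with a large parameter, or equivalently to Taylor expand the exponential and control the remainder. Because $\delta\le 1/2$, each application of $\sigma(D_x,D_\xi,D_y,D_\eta)$ costs two derivatives, i.e.\ a factor $h^{-2\delta}$, but carries a factor $h$ in front, so each term in the expansion gains $h^{1-2\delta}\ge h^0$; when $\delta<1/2$ this gain is a genuine power of $h$ and one gets an asymptotic expansion, while at $\delta=1/2$ the terms merely fail to grow, so one only gets $c\in S_{1/2}$ with no expansion. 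The $\tilde h$-class case is handled identically except that the symbol estimates come with extra $\tilde h$ factors: each derivative on $a$ or $b$ carries $\tilde h\, h^{-1/2}$ rather than $h^{-1/2}$, so each term in the expansion of $e^{ih\sigma/2}$ gains $h\cdot \tilde h^2 h^{-1} = \tilde h^2 \to 0$, which restores a genuine asymptotic expansion. I would carry out the remainder estimate once, in the form: after Taylor expanding to order $N$, the remainder is $\frac{1}{N!}\int_0^1 (1-t)^{N-1}\partial_t^N[\cdots]\,dt$, and bound the integrand by the stationary phase lemma in the $2d$ extra variables, keeping track of the $\langle\xi\rangle^{m_1+m_2-|\beta|}$ weights using Peetre's inequality.

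\medskip

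\textbf{Step 3: Disjoint supports.} If $\supp a\cap\supp b=\emptyset$ then on the diagonal the product $a(x,\xi)b(y,\eta)\big|_{x=y,\xi=\eta}$ vanishes identically, and so does every term in the asymptotic expansion (each is a derivative of $ab$ restricted to the diagonal). In the cases where the expansion is genuine — $\delta<1/2$ or the $\tilde h$-class — this immediately forces $c = \O{}(h^\infty\langle\xi\rangle^{-\infty})$ resp.\ $\O{}(\tilde h^\infty\langle\xi\rangle^{-\infty})$, since the expansion is asymptotic and all terms are zero. The cleanest way to see this without invoking "the expansion" as a black box is to note that on the support of the oscillatory integral defining $c$ one has $|z|\ge c_0>0$ (the separation of the two supports keeps the two arguments apart), so the phase $\langle z,\zeta\rangle/h$ is nonstationary and repeated integration by parts in $\zeta$ produces arbitrarily many powers of $h/|z|$, while differentiation costs only $h^{-1/2}\tilde h$ (or $h^{-1/2}$), giving the stated decay uniformly in $\langle\xi\rangle$.

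\medskip

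\textbf{Main obstacle.} The only real work is Step 2 — the uniform symbol estimates at the borderline $\delta=1/2$ and in the $\tilde h$-calculus, i.e.\ making sure the stationary-phase remainder bounds are uniform in $h$, $\tilde h$, and $\langle\xi\rangle$ simultaneously. Everything else is either a direct kernel computation (Step 1, cited from \cite{EZB}) or an immediate consequence (Step 3). Since the lemma is quoted verbatim from \cite[Theorems 4.11, 4.12, 9.5]{EZB}, in the paper itself I would simply cite those results; the sketch above indicates how one would reconstruct the proof if needed.
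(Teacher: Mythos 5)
Your proposal is correct and matches the paper's treatment: the paper gives no proof of this lemma, simply citing \cite[Theorems 4.11, 4.12, 9.5]{EZB}, and your sketch is the standard argument from that reference (kernel computation, Taylor expansion of $e^{ih\sigma/2}$ with the $h^{1-2\delta}$ resp.\ $\tilde h^2$ gain per term, and vanishing of all expansion terms for disjoint supports). Your closing remark that one would simply cite these results in the paper is exactly what the author does.
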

Moreover, we have the following boundedness lemma \cite[Theorems 4.23, 8.10]{EZB}
\begin{lemma}
\label{lem:L2Bound}
There exists a constant $M$, such that for all $s$, $a\in S^m_{1/2}(T^*\re^d)$
$$\|\oph_{\re^d}(a)u\|_{H_h^s}\leq C\left(\sum_{|\alpha|\leq Md}h^{|\alpha|/2}\sup |\partial^\alpha a(x,\xi)\la \xi\ra^{-m}|\right) \|u\|_{H_h^{s+m}}$$
where 
$$\|u\|_{H_h^s}:=\|\la hD\ra^su\|_{L^2}.$$
\end{lemma}

For $0\leq \delta\leq 1/2$, let $\Ph{m}{\delta}(M)$ denote the class of pseudodifferential operators with symbol in $S^m_\delta(T^*M)$ (see for example \cite[Appendix E]{ZwScat} \cite[Chapter 14]{EZB}) and define a global quantization procedure and symbol map
$$\oph(a):S^m_{\delta}\to \Ph{m}{\delta}(M),\quad \sigma:\Ph{m}{\delta}(M)\to S^m_{\delta,\tilde{h}}(T^*M)/h^{1-2\delta}\tilde{h}S^{m-1}_{\delta,\tilde{h}}(T^*M)$$ so that for $b$ real, $\oph(b)$ is symmetric, and
\begin{gather*} 
\oph(1)=\Id,\quad \sigma\circ \oph=\pi
\end{gather*} 
where $\pi$ is the natural projection map.
When it is convenient, we will sometimes write $a(x,hD)$ for $\oph(a)$.

\subsection{Second microlocal operators along a hypersurface, $\Sigma$}
We now review calculus of second microlocal pseudodifferential operators associated to a hypersurface (see \cite{SjoZwDist, SjZwFrac} for a more complete treatment). Let $\Sigma\subset T^*M$ be a compact embedded hypersurface with $M$ a manifold of dimension $d$. For $0\leq \delta \leq 1$, we say that 
$a\in S^{k_1,k_2}_{\Sigma,\delta,\tilde{h}}(T^*M)$ if 
\begin{equation}
\label{e:symbRequirement}
\begin{cases}
\text{near }\Sigma:\, V_1\dots V_{l_1}W_1\dots W_{l_2}a=\O{}(h^{-\delta l_2}\tilde{h}^{l_2}\la h^{-\delta}\tilde{h}d(\Sigma,\cdot)\ra ^{k_1}),\\
\text{where }V_1\dots V_{l_1}\text{ are tangent to }\Sigma\\
\text{and } W_1\dots W_{l_2}\text{ are any vector fields}\\
\text{away from }\Sigma:\, \partial_x^\alpha\partial_\xi^\beta a(x,\xi)=\O{}(\la h^{-\delta }\tilde{h}\ra^{k_1}\la \xi\ra^{k_2-|\beta|}).
\end{cases}
\end{equation}
where we take $\tilde{h}=1$ if $\delta<1$ and write $S^{k_1,k_2}_{\Sigma,\delta,\tilde{h}}(T^*M)$ and $\tilde{h}$ small with $h$ chosen small enough depending on $\tilde{h}$ for $\delta=1$. 
To define a class of operators associated to these symbol classes, we proceed locally and put $\Sigma$ into the normal form $\Sigma_0=\{\xi_1=0\}$. Let $a=a(x,\xi,\lambda;h)$, $\lambda=\tilde{h}h^{-\delta}\xi_1$ be defined near $(0,0)$ so that near $\Sigma_0$ \eqref{e:symbRequirement} becomes
\begin{equation}
\label{e:localSymbol}\partial_x^\alpha\partial_\xi^\beta\partial_\lambda^k a=\la \lambda\ra^{k_1-k}.
\end{equation}
If \eqref{e:localSymbol} holds, we write
$$a=\widetilde{\O{}}(\la \lambda\ra^{k_1}).$$
For such $a$, we define the quantization
$$\widetilde{\ophti}(a)u(x):=\frac{1}{(2\pi h)^d}\int a\left(\frac{x+y}{2},\xi,\tilde{h}h^{-\delta}\xi_1;h\right)e^{\frac{i}{h}\la x-y,\xi\ra} u(y)dyd\xi.$$
Then, using Lemma \ref{lem:pseudCompose}, we see that 
\begin{lemma}
\label{lem:pseudCompose}
For $0\leq \delta\leq 1$, $a=\widetilde{\O{}}(\la \lambda\ra^{k_1})$, and $b=\widetilde{\O{}}(\la \lambda\ra^{k_2})$. Then, 
$$\widetilde{\ophti}(a)\widetilde{\ophti}(b)=\widetilde{\ophti}(c)$$
where
$$c=\left.e^{ih\sigma(D_x,\tilde{h}h^{-\delta}D_{\lambda}+D_{\xi_1},D_{\xi'},D_y,\tilde{h}h^{-\delta}D_{\omega}+D_{\eta_1},D_{\eta'})/2}a(x,\xi,\lambda)b(y,\eta,\omega)\right|_{\substack{x=y\\\xi=\eta\\\lambda=\omega}}=\widetilde{\O{}}(\la \lambda\ra^{k_1+k_2}\ra).$$
Moreover, for $c$ has an asymptotic expansion
$$c\sim \sum_j \frac{i^jh^j}{j!2^j}\left.\left[(\sigma(D_x,\tilde{h}h^{-\delta}D_{\lambda}+D_{\xi_1},D_{\xi'},D_y,\tilde{h}h^{-\delta}D_{\omega}+D_{\eta_1},D_{\eta'}))^j(a(x,\xi)b(y,\eta))\right]\right|_{\substack{x=y\\\xi=\eta\\\lambda=\omega}}.$$
In particular, if $\supp a\cap \supp b=\emptyset$, then
$$c=\O{}( \tilde{h}^{\infty}(h^{1-\delta})^\infty\la \lambda\ra^{-\infty} ).$$
\end{lemma}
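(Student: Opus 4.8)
The plan is to realize $\widetilde{\ophti}(a)$ as an ordinary semiclassical Weyl quantization of a substituted symbol and then to reduce the composition to Lemma~\ref{lem:pseudCompose} for the basic calculus after one or two linear changes of the covariables. Set $\tilde a(x,\xi;h):=a(x,\xi,\tilde h h^{-\delta}\xi_1;h)$, so that by definition $\widetilde{\ophti}(a)=\oph_{\re^d}(\tilde a)$, and likewise $\tilde b$. By the chain rule $\partial_{\xi_1}\tilde a=(\partial_{\xi_1}a+\tilde h h^{-\delta}\partial_\lambda a)$ evaluated along $\lambda=\tilde h h^{-\delta}\xi_1$, while $\partial_x$ and $\partial_{\xi'}$ pass straight through; hence in the standard composition formula $c=e^{ih\sigma(D_x,D_\xi,D_y,D_\eta)/2}(\tilde a\otimes\tilde b)|_{\mathrm{diag}}$ the operator occupying the $\xi_1$-slot, when we rewrite it as acting on $a(x,\xi,\lambda)$, is exactly $\tilde h h^{-\delta}D_\lambda+D_{\xi_1}$, and that in the $\eta_1$-slot is $\tilde h h^{-\delta}D_\omega+D_{\eta_1}$ — this is precisely the twisted product written in the statement. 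The size bound $c=\widetilde{\O{}}(\la\lambda\ra^{k_1+k_2})$ is then just Leibniz together with $a=\widetilde{\O{}}(\la\lambda\ra^{k_1})$ and $b=\widetilde{\O{}}(\la\lambda\ra^{k_2})$.

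Everything therefore reduces to showing that the composition calculus genuinely applies to $\tilde a,\tilde b$, and here one must be careful: from $\partial_x^\alpha\partial_\xi^\beta\partial_\lambda^k a=\O{}(\la\lambda\ra^{k_1-k})$ one only gets $\partial_{\xi_1}^j\tilde a=\O{}((\tilde h h^{-\delta})^j\la\lambda\ra^{k_1-j})$, so when $\delta>1/2$ the cost $\tilde h h^{-\delta}$ per normal derivative exceeds $h^{-1/2}$ and $\tilde a$ is not a symbol of class $S_{1/2}$. The remedy is to rescale $\xi_1=h^\delta\tilde h^{-1}\zeta_1$ (keeping $x$, and in particular $x_1$, fixed). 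Under this change $\lambda=\zeta_1$, the phase $h^{-1}(x_1-y_1)\xi_1$ becomes $\hbar_1^{-1}(x_1-y_1)\zeta_1$ with $\hbar_1:=h^{1-\delta}\tilde h$, and the rescaled symbols are honest classical symbols — of order $k_1$ (resp.\ $k_2$) in $\zeta_1$, with $\partial_{\zeta_1}$ gaining a factor $\la\zeta_1\ra^{-1}$, and $S^0$ in all remaining variables — while $h\ll\hbar_1\to0$ precisely in the stated regime ($0<\delta<1$ with $\tilde h=1$, or $\delta=1$ with $\tilde h\to0$; the case $\delta=0$ is literally the basic calculus). Treating the $(x_1,\zeta_1)$ variables with Planck constant $\hbar_1$ and the $(x',\xi')$ variables with $h$, and if one wishes making the further harmless rescaling $\xi'\mapsto(\hbar_1/h)\xi'$ so that a single parameter $\hbar_1$ governs the whole phase, Lemma~\ref{lem:pseudCompose} (with $\delta=0$, now applicable) gives the exact composition, the asymptotic expansion, and the bound $\O{}(\hbar_1^\infty\la\zeta_1\ra^{-\infty})$ when $\supp a\cap\supp b=\varnothing$. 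Undoing the rescalings turns this last estimate into $\O{}((h^{1-\delta}\tilde h)^\infty\la\lambda\ra^{-\infty})=\O{}(\tilde h^\infty(h^{1-\delta})^\infty\la\lambda\ra^{-\infty})$ and converts the expansion back into the $\tilde h h^{-\delta}D_\lambda+D_{\xi_1}$ form above.

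The step I expect to be the main obstacle is exactly this use of the rescaling to escape the $S_{1/2}$ barrier: one has to check that the loss genuinely lives only in the single covariable $\xi_1$ and is absorbed by the weight $\la\lambda\ra$, so that after rescaling no symbol derivative is more singular than scale $\hbar_1$, and that the two scales $h\ll\hbar_1\ll1$ interact compatibly under the block-diagonal phase (in particular that the implicit constants do not degrade as the rescaled symbol's support grows with $h$). Everything else — that the changes of variables are legitimate manipulations of oscillatory integrals and that the symbol bounds transform by Leibniz — is routine. This argument is essentially a rederivation of the composition theorem for the second microlocal calculus of \cite{SjoZwDist,SjZwFrac}, and in the write-up one would simply invoke those references.
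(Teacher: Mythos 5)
Your proposal is correct and follows essentially the same route as the paper, which simply derives the twisted composition formula from the basic Weyl calculus (Lemma \ref{lem:pseudCompose} for $S_\delta$ symbols) via the substitution $\lambda=\tilde h h^{-\delta}\xi_1$, deferring details to \cite{SjoZwDist,SjZwFrac}. Your rescaling $\xi_1=h^\delta\tilde h^{-1}\zeta_1$ with effective Planck constant $\hbar_1=h^{1-\delta}\tilde h$ is exactly the standard way to justify the expansion past the $S_{1/2}$ threshold; the paper leaves this step implicit, so you have filled in rather than diverged from its argument.
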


For an operator $\widetilde{\ophti}(a)$, we define its principle symbol by the equivalence class of $a$ in 
$$\widetilde{\O{}}(\la \lambda\ra^{k_1})/\widetilde{\O{}}(\tilde{h}h^{1-\delta}\la \lambda\ra^{k_1-1}).$$
The $L^2$ boundedness for second microlocal operators follows easily from Lemma \ref{lem:L2Bound}.
\begin{lemma}
\label{lem:secondL2Bound}
For $a=\widetilde{\O{}}(\la \lambda\ra^{k_1})$ with bounded support in $\xi_1$, there exists $C>0$ such that 
$$\|\widetilde{\ophti}(a)u\|_{L^2}\leq Ch^{-\delta\max(k_1,0)}\|u\|_{L^2}.$$
\end{lemma}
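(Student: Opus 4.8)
The plan is to reduce the second microlocal $L^2$ bound to the standard semiclassical $L^2$ bound of Lemma \ref{lem:L2Bound} applied in the local model coordinates where $\Sigma$ has been straightened to $\Sigma_0 = \{\xi_1 = 0\}$. Working locally, the operator $\widetilde{\ophti}(a)$ is, by definition, $\oph_{\re^d}(\tilde a)$ where $\tilde a(x,\xi;h) = a(x,\xi,\tilde h h^{-\delta}\xi_1; h)$. The point is to identify the symbol class of $\tilde a$. First I would observe that since $a$ has bounded support in $\xi_1$ (say $|\xi_1|\le R$), on the relevant region $\la \lambda\ra = \la \tilde h h^{-\delta}\xi_1\ra \le C h^{-\delta}$, so that $\tilde a = \widetilde{\O{}}(\la \lambda\ra^{k_1})$ implies $|\tilde a|\le C\la\lambda\ra^{\max(k_1,0)}\le C h^{-\delta\max(k_1,0)}$; this accounts for the factor $h^{-\delta\max(k_1,0)}$ in the claimed bound.

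The heart of the matter is the derivative bounds. Each $\xi_1$-derivative hitting $\tilde a$ produces, by the chain rule, a factor $\tilde h h^{-\delta}$ from $\partial_\lambda$, together with the ordinary $\partial_{\xi_1}$; and $\partial_\lambda^k a = \widetilde{\O{}}(\la\lambda\ra^{k_1-k})$, so each such derivative improves the $\la\lambda\ra$ power by one, gaining a factor $\la\lambda\ra^{-1}\le 1$. Hence for any multi-index $\beta$ (and $\alpha$ in $x$), $|\partial_x^\alpha\partial_\xi^\beta \tilde a|\le C (\tilde h h^{-\delta})^{|\beta_1|}\la\lambda\ra^{\max(k_1,0)}\la\xi\ra^{k_2-|\beta'|}$, with $\tilde h h^{-\delta}\le h^{-1/2}$ since $\delta\le 1/2$ and $\tilde h\le h^{1/2}$ (or more simply $\tilde h h^{-\delta}\le h^{-\delta}\le h^{-1/2}$ in the $\tilde h=1$, $\delta\le 1/2$ case, and $\tilde h h^{-1}$ controlled when $\delta=1$ with $\tilde h\le h^{1/2}$). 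Either way $\tilde a$ lies in $S^{m}_{1/2}$ up to the overall constant $h^{-\delta\max(k_1,0)}$, uniformly as $h\to 0$ (and, when $\delta=1$, with $h$ small depending on $\tilde h$). Then Lemma \ref{lem:L2Bound} with $s=0=m$ applied to $h^{\delta\max(k_1,0)}\tilde a$ gives the uniform operator bound, and multiplying back by $h^{-\delta\max(k_1,0)}$ yields the claim. Finally, one passes from the local model to the manifold by a partition of unity subordinate to coordinate charts straightening $\Sigma$, with the error terms from a microlocal partition away from $\supp a$ being $\O{}(h^\infty)$ by Lemma \ref{lem:pseudCompose}.

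The main obstacle I anticipate is bookkeeping rather than conceptual: one must check that the substitution $\lambda = \tilde h h^{-\delta}\xi_1$ really does land $\tilde a$ in the isotropic class $S^m_{1/2}$ with the stated $h$-dependence, keeping careful track of which $\xi$-derivatives are ``tangential'' ($\xi'$, costing nothing) versus transversal ($\xi_1$, costing $\tilde h h^{-\delta}$ but compensated by the $\la\lambda\ra$ gain), and that the sum $\sum_{|\alpha|\le Md} h^{|\alpha|/2}\sup|\partial^\alpha \tilde a|$ in Lemma \ref{lem:L2Bound} remains bounded after extracting the factor $h^{-\delta\max(k_1,0)}$. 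One should also confirm that the bounded-$\xi_1$-support hypothesis is genuinely needed: without it $\la\lambda\ra$ is not controlled by a power of $h$, and the estimate as stated would fail. A secondary technical point is that, because the $x$-derivatives in \eqref{e:symbRequirement} away from $\Sigma$ already cost $h^{-\delta}$, one does not get a class better than $S^m_{1/2}$, which is exactly why Lemma \ref{lem:L2Bound} (valid for $\delta = 1/2$) is the right tool.
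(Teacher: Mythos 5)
Your overall strategy --- extract the prefactor $h^{-\delta\max(k_1,0)}$ from the bounded $\xi_1$-support (since $\la\lambda\ra=\la\tilde h h^{-\delta}\xi_1\ra\leq Ch^{-\delta}$ there) and then invoke a Calder\'on--Vaillancourt bound --- is the right one, and that part of your argument matches the paper. The gap is in the specific reduction: you claim $\tilde a(x,\xi)=a(x,\xi,\tilde h h^{-\delta}\xi_1)$ lies in $S^m_{1/2}$ so that Lemma \ref{lem:L2Bound} applies at semiclassical parameter $h$. This fails in precisely the regimes for which the lemma is needed. The statement covers all $0\leq\delta\leq 1$, with $\tilde h=1$ when $\delta<1$ and $\tilde h$ a fixed small parameter independent of $h$ when $\delta=1$; the paper uses it with $\delta=2/3$, $\tilde h=1$ and with $\delta=1$, $\tilde h=2^{-J}$ fixed. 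In both cases a $\xi_1$-derivative of $\tilde a$ costs $\tilde h h^{-\delta}\gg h^{-1/2}$ near $\lambda=0$, where the gain $\la\lambda\ra^{-1}$ is of no help, so $\tilde a\notin S^m_{1/2}$ and the sum $\sum_{|\alpha|\leq Md}h^{|\alpha|/2}\sup|\partial^\alpha\tilde a|$ in Lemma \ref{lem:L2Bound} grows like $(\tilde h h^{1/2-\delta})^{Md}$, a negative power of $h$. Your parenthetical side conditions ($\delta\leq 1/2$, or $\tilde h\leq h^{1/2}$) are not hypotheses of the lemma and do not hold in the applications.

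The repair is the paper's one-line argument: instead of applying Lemma \ref{lem:L2Bound} at parameter $h$, substitute $\xi=h\eta$ in the oscillatory integral so that $\widetilde{\ophti}(a)$ is exactly the unit-Planck quantization $\op_1\bigl(a(y,h\eta,\tilde h h^{1-\delta}\eta_1)\bigr)$. Every $\eta$-derivative of this rescaled symbol costs at most $\max(h,\tilde h h^{1-\delta})\leq 1$ for the whole range $0\leq\delta\leq1$, so Calder\'on--Vaillancourt with $h=1$ bounds the operator norm by $C\sum_{|\alpha|\leq Md}\sup|\partial^\alpha a(y,h\eta,\tilde h h^{1-\delta}\eta_1)|\leq C\sup\la\lambda\ra^{k_1}\leq Ch^{-\delta\max(k_1,0)}$, the last step being your (correct) use of the bounded $\xi_1$-support. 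With that substitution your proof closes; without it, the estimate as you wrote it only covers $\delta\leq 1/2$.
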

\begin{proof}
We have
\begin{align*}
\|\widetilde\ophti(a)\|_{L^2\to L^2}
&\leq \|\op_1((a(y,h\eta,h^{1-\delta}\tilde{h}\xi_1))\|_{L^2\to L^2}\\
&\leq C\sum_{|\alpha|\leq Md}\|\partial^\alpha a(y,h\eta,h^{1-\delta}\tilde{h}\xi_1)\|_{L^\infty}
\end{align*}
where the last line follows by Lemma \ref{lem:L2Bound} applied with $h=1$. 
\end{proof}

The class 
$$\Ph{k_1}{\Sigma_0,\delta}(\re^d):=\{\widetilde{\ophti}(a)\mid a=\widetilde{\O{}}(\la \lambda\ra^{k_1})\}$$
is invariant under conjugation by $h$-Fourier integral operators preserving $\Sigma_0$.

We say that $A=B$ microlocally on an opens set $U\subset T^*M$ if for any $a,a'\in \Cc(T^*M)$ supported in a small enough neighborhood of $U$, 
$$\oph(a)(A-B)\oph(b)=\O{\mc{D}'\to C^\infty}(\tilde{h}^{\infty}(h^{1-\delta})^\infty).$$ 

Now, we define the global class of operators $\Ph{k_1,k_2}{\Sigma,\delta,\tilde{h}}(M)$ by saying $A\in\Ph{k_1,k_2}{\Sigma,\delta,\tilde{h}}(M)$ if and only if for any point $p\in \Sigma$ and elliptic $h$-FIO, $U:C^\infty(M)\to C^\infty(\re^d)$ quantizing a symplectomorphism, $\kappa$, with 
$$\kappa(p)=(0,0),\quad\text{and}\quad\kappa (\Sigma\cap V)\subset\Sigma_0$$
where $V$ is some neighborhood of $p$, microlocally near $(0,0)$, 
$$UAU^{-1}=\widetilde{\ophti}(\widetilde{\O{}}(\la \lambda\ra^{k_1}))$$ 
and for any point $p\notin\Sigma$, $A\in (h^{-\delta}\tilde{h})^{k_1}\Ph{k_2}{}(M)$ microlocally near $p$. 

For $a\in S^{k_1,k_2}_{\Sigma,\delta,\tilde{h}}(T^*M)$, we define a quantization procedure using the normal form. Let $\psi\in \Cc(T^*M)$ have $\psi\equiv 1$ on $\{d(p,\Sigma)\leq \e\}$ and $\supp \psi\subset\{d(p,\Sigma)\leq 2\e\}$ for some $\e>0$ to be chosen small enough. We then find a finite cover $W_j$ of $\supp \psi$ such that there exists a neighborhood $V$ of $(0,0)\in T^*\re^d$ such that for each $j$ there is a symplectomorphism $\kappa_j$  
$$\kappa_j:V\to W_j,\quad \kappa_j(V\cap \Sigma_0)= \Sigma\cap W_j.$$ 
Then choose elliptic $h-$FIO's $U_j$ quantizing $\kappa_j$ defined microlocally in a neighborhood of $V\times W_j$. Let $\varphi_j$ be a partition of unity on $\{d(p,\Sigma)\leq 2\e\}$ subordinate to $W_j$ and define $a_j$ as the unique symbol of the form $a_j=a_j(x,\xi',\lambda;h)$ such that 
$$(a_j)|_{\lambda=\tilde{h}h^{-\delta}\xi_1}=(\psi \phi_j a)\circ \kappa_j,$$
and define
$$\ophtis(a)=\ophti((1-\psi)a)+\sum_jU_j^{-1}\widetilde{\ophti}(a_j)U_j.$$
By adjusting the $U_j$, we may arrange so that 
$$\ophtis(1)=\Id.$$
Then we have the following lemma \cite[Proposition 4.1]{SjoZwDist}. 
\begin{lemma}
\label{lem:secondPseud}
There exists maps 
$$\ophtis:S^{k_1,k_2}_{\Sigma,\delta,\tilde{h}}(T^*M)\to \Ph{k_1,k_2}{\Sigma,\delta,\tilde{h}}(M)$$
 and 
 $$\sigma_{\Sigma}:\Ph{k_1,k_2}{\Sigma,\delta,\tilde{h}}(M)\to S^{k_1,k_2}_{\Sigma,\delta,\tilde{h}}(T^*M)/h^{1-\delta}\tilde{h}S^{k_1-1,k_2-1}_{\Sigma,\delta,\tilde{h}}(T^*M).$$
 Such that 
 \begin{gather*}
 \sigma_\Sigma(AB)=\sigma_\Sigma(A)\sigma_\Sigma(B),\\
 0\to h^{1-\delta}\tilde{h}\Ph{k_1-1,k_2-1}{\Sigma,\delta,\tilde{h}}(M)\to \Ph{k_1,k_2}{\Sigma,\delta,\tilde{h}}(M)\underset{\sigma_\Sigma}{\longrightarrow}S^{k_1,k_2}_{\Sigma,\delta,\tilde{h}}(T^*M)/h^{1-\delta}\tilde{h}S^{k_1-1,k_2-1}_{\Sigma,\delta,\tilde{h}}(M)
 \end{gather*}
 is a short exact sequence,
 $$\sigma_{\Sigma}\circ\ophtis:S^{k_1,k_2}_{\Sigma,\delta,\tilde{h}}(T^*M)\to S^{k_1,k_2}_{\Sigma,\delta,\tilde{h}}(T^*M)/h^{1-\delta}\tilde{h}S^{k_1-1,k_2-1}_{\Sigma,\delta,\tilde{h}}(T^*M)$$
 is the natural projection map and if $a\in S^{k_1,k_2}_{\Sigma,\delta,\tilde{h}}(T^*M)$ is supported away from $\Sigma$, then $\ophtis(a)\in h^{-\delta k_1}\Ph{k_2}{}(M).$ Finally, if $\supp a\cap \supp b=\emptyset$, then 
 $$\ophtis(a)\ophtis(b)=\O{\mc{D}'\to C^\infty}((h^{1-\delta}\tilde{h})^{\infty}).$$
\end{lemma}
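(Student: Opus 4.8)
The plan is to reduce everything to the local normal form $\Sigma_0=\{\xi_1=0\}$, where the calculus has already been assembled via $\widetilde{\ophti}$, and then to globalize using the finite cover $\{W_j\}$, the symplectomorphisms $\kappa_j$, and the elliptic $h$-FIOs $U_j$ introduced just before the statement. The operator $\ophtis(a)$ is already \emph{defined} in the text; the content of the lemma is: (i) that $\ophtis(a)$ indeed lies in $\Ph{k_1,k_2}{\Sigma,\delta,\tilde{h}}(M)$, (ii) that there is a well-defined principal symbol map $\sigma_\Sigma$ fitting into the displayed short exact sequence and satisfying $\sigma_\Sigma(AB)=\sigma_\Sigma(A)\sigma_\Sigma(B)$ and $\sigma_\Sigma\circ\ophtis=\pi$, together with the statement that a symbol supported off $\Sigma$ quantizes into $h^{-\delta k_1}\Ph{k_2}{}(M)$, and (iii) the disjoint-support negligibility.

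First I would record the model statements. For $a=\widetilde{\O{}}(\la\lambda\ra^{k_1})$ and $b=\widetilde{\O{}}(\la\lambda\ra^{k_2})$, the second microlocal composition lemma (Lemma \ref{lem:pseudCompose}) gives $\widetilde{\ophti}(a)\widetilde{\ophti}(b)=\widetilde{\ophti}(c)$ with $c$ admitting the stated asymptotic expansion whose leading term is $ab$, and with $c=\O{}(\tilde{h}^\infty(h^{1-\delta})^\infty\la\lambda\ra^{-\infty})$ when $\supp a\cap\supp b=\emptyset$; the $L^2$-boundedness Lemma \ref{lem:secondL2Bound} controls remainders. Together these prove the product formula, the negligibility of disjoint supports, and the surjectivity of the symbol map \emph{in the model}. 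For the short exact sequence I would argue by iteration and Borel summation: if $A=\widetilde{\ophti}(a)$ has vanishing principal symbol, i.e.\ $a\in h^{1-\delta}\tilde{h}\,\widetilde{\O{}}(\la\lambda\ra^{k_1-1})$, then directly $A\in h^{1-\delta}\tilde{h}\,\Ph{k_1-1,k_2-1}{\Sigma_0,\delta}(\re^d)$, and conversely membership in that ideal forces the symbol to vanish; that $\ophtis(1)=\Id$ and that $\sigma_\Sigma\circ\ophtis$ is the projection are built into the normalization of the $U_j$.

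The globalization rests on an Egorov-type statement: if $U$ is an elliptic $h$-FIO quantizing a symplectomorphism $\kappa$ with $\kappa(\Sigma_0)\subset\Sigma_0$ near the relevant point, then conjugation by $U$ preserves the model class and $\sigma(U^{-1}\widetilde{\ophti}(a)U)=\kappa^*a$ modulo the lower-order ideal $h^{1-\delta}\tilde h\widetilde{\O{}}(\la\lambda\ra^{k_1-1})$ --- this is the invariance assertion stated just above the lemma. I would prove it by writing the conjugation as a one-parameter family and differentiating, checking at each step that the anisotropic rescaling $\lambda=\tilde{h}h^{-\delta}\xi_1$ is respected: a symplectomorphism fixing $\{\xi_1=0\}$ sends $\xi_1$ to $e\,\xi_1$ with $e$ nonvanishing, plus terms supported where $\xi_1\neq 0$, so pullbacks of symbols of the form $a(x,\xi',\lambda)$ remain of that form after rescaling. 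Granting this, the transition maps $\kappa_j\circ\kappa_k^{-1}$ on overlaps act correctly on the local symbols $a_j$, so $\ophtis(a)$ is independent of all choices modulo $h^{1-\delta}\tilde{h}\,\Ph{k_1-1,k_2-1}{\Sigma,\delta,\tilde{h}}(M)$, which yields a well-defined $\sigma_\Sigma$; since the product formula, the short exact sequence, and disjoint-support negligibility are microlocal properties, they can be verified chart by chart from the model results above together with the pseudolocality of the $U_j$. I expect the Egorov/invariance step to be the main obstacle, as it requires tracking the anisotropic structure of the second-microlocal symbol classes through the FIO conjugation; everything else is a routine reduction to the model case.
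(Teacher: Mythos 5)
Your outline is correct and follows the same route the paper takes: the paper itself gives no proof beyond the construction of $\ophtis$ preceding the statement, deferring to \cite[Proposition 4.1]{SjoZwDist}, and your reduction to the normal form $\{\xi_1=0\}$, use of the model composition and $L^2$ lemmas, and globalization via the invariance of the model class under conjugation by $h$-FIOs preserving $\Sigma_0$ is exactly the argument being invoked there. The Egorov/invariance step you identify as the main obstacle is indeed the substantive point, and your observation that a symplectomorphism preserving $\{\xi_1=0\}$ pulls $\xi_1$ back to $e\,\xi_1$ with $e$ elliptic is the key to seeing that the anisotropic rescaling $\lambda=\tilde h h^{-\delta}\xi_1$ is respected.
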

\begin{remark}
When $\delta=1$, we will take the residual class to be operators which are $\O{\mc{D}'\to C^\infty}(\tilde{h}^\infty)$ microlocally near $\Sigma$. The residual class actually has addition properties which are often convenient (see \cite[Section 5.4]{SjZwFrac}), but this will be enough for our purposes.
\end{remark}

Our last task will be to show that the operators $G_i^{\rho,s}(b(x,hD))$ are pseudodifferential operators in the second microlocal calculus. 
Let $b(x,\xi)\in S^m(T^*M;\re)$ with 
$$|b(x,\xi)|\geq c\la \xi\ra^m>0,\quad\quad |\xi|_g\geq M,$$
$\psi(t)\in \Cc(\re)$ and $\chi\in \Cc(\re)$ with $\chi \equiv1$ near 0.
Notice that under these assumptions, $\oph(b)$ is self adjoint with domain $H^m$. 

We consider $\psi(\oph(b)\tilde{h}h^{-\delta})$ microlocally near a point $(x_0,\xi_0)$. We have
\begin{align*} 
\psi (\oph(b)\tilde{h}h^{-\delta})&=\frac{h^\delta}{2\pi h\tilde{h}}\int e^{\frac{i}{h}t\oph(b)}\hat{\psi}(\tilde{h}^{-1}h^{\delta-1}t) dt\\
&=\frac{h^\delta}{2\pi h\tilde{h}}\int e^{\frac{i}{h}t\oph(b)}\hat{\psi}(\tilde{h}^{-1}h^{\delta-1}t)\chi(t) dt\\
&\quad\quad\qquad+\frac{h^\delta}{2\pi h\tilde{h}}\int e^{\frac{i}{h}t\oph(b)}\hat{\psi}(\tilde{h}^{-1}h^{\delta-1}t)(1-\chi(t)) dt\\
&=\frac{h^\delta}{2\pi h\tilde{h}}\int e^{\frac{i}{h}t\oph(b)}\hat{\psi}(\tilde{h}^{-1}h^{\delta-1}t)\chi(t) dt+\O{\mc{D}'\to C^\infty}((\tilde{h}h^{1-\delta})^{\infty})\\
&=\frac{h^\delta}{(2\pi h)^{d+1}\tilde{h}}\int e^{\frac{i}{h}(\varphi(t,x,\theta)-\la y, \theta\ra+\la h^\delta\tilde{h}^{-1}t,\tau\ra)}a(t,x,\theta)\psi(\tau) \chi(t)dtd\tau d\theta\\
&\quad\quad\quad\quad+\O{\mc{D}'\to C^\infty}((\tilde{h}h^{1-\delta})^{\infty})
\end{align*}
where 
$$\partial_t\varphi=b(x,\partial_x\varphi),\quad \varphi(0,x,\theta)=\la x,\theta\ra,\quad a(0,x,\theta)=1+\O{}(h).$$
Then, performing stationary phase in the $t,\tau$ variables gives 
$$\frac{1}{(2\pi h)^{d}}\int e^{\frac{i}{h}\la x- y, \theta\ra}a_1(x,\theta)d\theta$$
where 
$$a_1(x,\theta)\sim \psi(\tilde{h}h^{-\delta}b(x,\theta))+\sum_{j=1}^\infty (h^{1-\delta}\tilde{h})^jL_{2j}(a\psi)\Big|_{\substack{t=0\\\tau=\tilde{h}h^{-\delta}b(x,\theta)}}$$
and $L_{2j}$ is a differential operator of order $2j$ in $t$ and $s$. 
Now, changing coordinates so that $b(x,\xi)=\xi_1$ and using a microlocal partition of unity, proves the following lemma
\begin{lemma}
\label{lem:funcCalculus}
Let $b(x,\xi)\in S^m(T^*M;\re)$ define $\Sigma$ and have 
$$|b(x,\xi)|\geq c\la \xi\ra^m,\quad\text{on }|\xi|_g>M.$$
Then for $\psi\in \Cc(\re)$, 
$$\psi (\oph(b)\tilde{h}h^{-\delta})\in\Ph{-\infty,-\infty}{\Sigma,\delta,\tilde{h}}$$
and 
$$\sigma_{\Sigma}(\psi (\oph(b)\tilde{h}h^{-\delta}))=\psi(b(x,\xi)\tilde{h}h^{-\delta}).$$
\end{lemma}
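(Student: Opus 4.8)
The plan is to make rigorous the formal computation that precedes the statement: realize $\psi(\oph(b)\tilde h h^{-\delta})$ as an honest $h$-pseudodifferential operator whose full amplitude, in a normal form where $b$ becomes $\xi_1$, lies in the second microlocal class $S^{-\infty,-\infty}_{\Sigma,\delta,\tilde h}$. First I would use Fourier inversion to write $\psi$ of the rescaled operator $\tilde h h^{-\delta}\oph(b)$ as a superposition of the propagators $e^{\frac i h t\oph(b)}$ against the rescaled transform $\hat\psi(\tilde h^{-1}h^{\delta-1}t)$ (equivalently, carrying an auxiliary $\tau$-integral so that the phase contains $\langle h^\delta\tilde h^{-1}t,\tau\rangle$ and the amplitude contains $\psi(\tau)$). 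Then I cut off in $t$ by $\chi$: on $\supp(1-\chi)$ the argument $\tilde h^{-1}h^{\delta-1}t$ is bounded below by a constant times $(\tilde h h^{1-\delta})^{-1}\to\infty$, so, $\hat\psi$ being Schwartz, this contribution is $\O{\mc D'\to C^\infty}((\tilde h h^{1-\delta})^\infty)$ and may be absorbed into the residual class.

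Next, on $\supp\chi$ the time $|t|$ is small, so $e^{\frac i h t\oph(b)}$ admits a global WKB parametrix $(2\pi h)^{-d}\int e^{\frac i h(\varphi(t,x,\theta)-\langle y,\theta\rangle)}a(t,x,\theta)\,d\theta$, where $\varphi$ solves the Hamilton--Jacobi equation $\partial_t\varphi=b(x,\partial_x\varphi)$, $\varphi(0,x,\theta)=\langle x,\theta\rangle$ (no caustics for small time since we stay near $t=0$), and $a$ is an elliptic classical amplitude with $a(0,x,\theta)=1+\O{}(h)$. Substituting this parametrix and performing stationary phase in the $(t,\tau)$ variables --- whose critical point is $t=0$, $\tau=\tilde h h^{-\delta}b(x,\theta)$, with nondegenerate Hessian having bounded inverse --- yields an $h$-pseudodifferential operator with full amplitude $a_1(x,\theta)\sim \psi(\tilde h h^{-\delta}b(x,\theta))+\sum_{j\ge 1}(h^{1-\delta}\tilde h)^j L_{2j}(a\psi)\big|_{t=0,\ \tau=\tilde h h^{-\delta}b(x,\theta)}$, the $L_{2j}$ being differential operators of order $2j$.

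To identify the symbol class I would localize near a point of $\Sigma$, conjugate by an elliptic $h$-FIO straightening $\Sigma$ to $\Sigma_0=\{\xi_1=0\}$ so that $b$ becomes $\xi_1$ (possible because $b$ defines $\Sigma$ and has nonvanishing differential there), and set $\lambda=\tilde h h^{-\delta}\xi_1$. One then checks that $a_1$, viewed as a function of $(x,\xi',\lambda)$, satisfies $\partial_x^\alpha\partial_{\xi'}^\beta\partial_\lambda^k a_1=\widetilde{\O{}}(\langle\lambda\rangle^{-\infty})$: the leading term $\psi(\lambda)$ is Schwartz in $\lambda$, each $\xi_1$-derivative costs a factor $\tilde h h^{-\delta}$ but is exactly converted into a $\lambda$-derivative, and the $j$-th correction gains $(h^{1-\delta}\tilde h)^j$, matching the residual class $\widetilde{\O{}}(\tilde h h^{1-\delta}\langle\lambda\rangle^{k_1-1})$. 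Hence $\psi(\oph(b)\tilde h h^{-\delta})\in\Ph{-\infty,-\infty}{\Sigma,\delta,\tilde h}$ microlocally near $\Sigma$ with $\sigma_\Sigma=\psi(\tilde h h^{-\delta}b)$. Away from $\Sigma$, since $\psi\in\Cc$ and $\tilde h h^{-\delta}|b|$ exceeds $\supp\psi$ once $d(\cdot,\Sigma)\gtrsim h^\delta\tilde h^{-1}$ (using $|b|\ge c\langle\xi\rangle^m$ for large $\xi$), the operator is residual there; patching with the partition of unity used to define $\ophtis$ finishes the proof.

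I expect the main obstacle to be precisely the bookkeeping of the last step: verifying that the stationary-phase error terms, rewritten in the rescaled variable $\lambda$, genuinely lie in $S^{-\infty,-\infty}_{\Sigma,\delta,\tilde h}$ with the correct powers of $h^{1-\delta}\tilde h$ --- that is, that differentiating in the normal direction $\xi_1$ and converting to $\partial_\lambda$ does not degrade the symbol estimates --- together with checking that the WKB phase $\varphi$ and amplitude $a$ depend on $(h,\tilde h)$ uniformly enough that the asymptotic expansion is summable in the second microlocal sense.
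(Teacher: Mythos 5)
Your argument is correct and follows essentially the same route as the paper: Fourier inversion in $t$ with the auxiliary $\tau$-variable, a cutoff $\chi(t)$ isolating small times (the complement being residual by the rapid decay of $\hat\psi$), the WKB parametrix for $e^{\frac{i}{h}t\oph(b)}$ with $\partial_t\varphi=b(x,\partial_x\varphi)$, stationary phase in $(t,\tau)$ producing the expansion $a_1\sim\psi(\tilde h h^{-\delta}b)+\sum_j(h^{1-\delta}\tilde h)^jL_{2j}(a\psi)$, and finally the change of coordinates putting $b$ into the normal form $\xi_1$ together with a microlocal partition of unity. The bookkeeping you flag at the end --- converting $\xi_1$-derivatives into $\partial_\lambda$-derivatives at cost $\tilde h h^{-\delta}$ and checking that the stationary-phase corrections carry the right powers of $h^{1-\delta}\tilde h$ --- is precisely what the paper leaves implicit, and it goes through as you describe.
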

Now, let $\psi,\psi_0\in \Cc(\re)$ with $\psi\equiv 1$ on $[1,2]$ and $\supp \psi\subset [1/2,4]$ such that 
$$\psi_0(x)+\sum_{j=1}^\infty\psi_j(x)\equiv 1,\qquad \psi_j(x)=\psi(2^{-j}x).$$ 
Then, 
$$G_i^{\rho,s}(b(x,hD))=G_i^{\rho,s}(b(x,hD))\left(\psi_0(b(x,hD)h^{-\rho})+\sum_{j=1}^\infty\psi(2^{-j}b(x,hD)h^{-\rho})\right).$$
Lemma \ref{lem:funcCalculus} implies that if $\rho<1$, 
\begin{gather*} 
G_1^{\rho,s}(b(x,hD))\psi_j(b(x,hD)h^{-\rho})\in h^{\rho s}2^{js}\Ph{-\infty,-\infty}{\Sigma,\rho,1},\\
G_2^{\rho,s}(b(x,hD))\psi_j(b(x,hD)h^{-\rho})\in h^{\rho s}\Ph{-\infty,-\infty}{\Sigma,\rho,1}.
\end{gather*}
Then, the orthogonality of $\psi_j(b(x,hD)h^{-\rho})$ and $\psi_k(b(x,hD)h^{-\rho})$ for $|j-k|>2$ implies that 
$$G_1^{\rho,s}(b(x,hD))\in h^{\min(s,0)\rho}\Ph{s,-\infty}{\Sigma,\rho,1},\quad G_2^{\rho,s}(b(x,hD))\in h^{\rho s}\Ph{0,-\infty}{\Sigma,\rho,1}$$
In addition, if $\rho=1$, then 
\begin{gather*} 
G_1^{1,s}(b(x,hD))\psi_j(b(x,hD)h^{-1})\in h^{\rho s}2^{js}\Ph{-\infty,-\infty}{\Sigma,1,2^{-j}},\\
G_2^{1,s}(b(x,hD))\psi_j(b(x,hD)h^{-1})\in h^{\rho s}\Ph{-\infty,-\infty}{\Sigma,1,2^{-j}}.
\end{gather*} 
and hence for $J>0$, 
\begin{gather*} 
G_1^{1,s}(b(x,hD))\in h^{\min(s,0)}\Ph{s,-\infty}{\Sigma,1,2^{-J}}+\O{L^2\to L^2}(h^{ s}),\\
 G_2^{1,s}(b(x,hD))\in h^{ s}\Ph{0,-\infty}{\Sigma,1,2^{-J}}+\O{L^2\to L^2}(h^{s}).
 \end{gather*}

\section{Estimates on normal frequency bands}
\label{sec:normal}
We start by giving a quantitative estimate on the restriction of quasimodes when microlocalized at a certain scale from the glancing set. 
We say that $u$ is \emph{compactly microlocalized} if there exists $\chi \in \Cc(\re)$ such that 
$$u=\chi(|hD|_g)u+\O{C^\infty}(h^\infty).$$
We say that $u$ is a \emph{quasimode for $P$} if 
$$\|Pu\|_{L^2(M)}=\O{}(h)\|u\|_{L^2(M)}.$$

\begin{lemma}
\label{lem:normal}
Let $u$ be compactly microlocalized and let $\tilde{h}\geq h^{1/2}$.  Suppose that $\chi\in \Cc(\re)$ has support in $[1,4]$.
Then,
$$\left\|\gamma_H\chi\left(\frac{|\partial_\nu p(x,hD)|}{\tilde{h}}\right)u\right\|_{L^2(H)}\leq C\tilde{h}^{-1/2}(\|u\|_{L^2(M)}+Ch^{-1}\|Pu\|_{L^2(M)}).$$
In particular, if $u$ is a quasimode for $P$, then 
$$\left\|\gamma_H\chi\left(\frac{|\partial_\nu p(x,hD)|}{\tilde{h}}\right)u\right\|_{L^2(H)}\leq C\tilde{h}^{-1/2}\|u\|_{L^2(M)}.$$
\end{lemma}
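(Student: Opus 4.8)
The plan is to reduce the statement to an estimate on restrictions of quasimodes localized to a single normal frequency band, which is exactly the type of estimate proved by Tacy in \cite{T14}. First I would write $\chi(|\partial_\nu p(x,hD)|/\tilde h) = \chi(|\partial_\nu p(x,hD)|/\tilde h)\,\tilde\chi(|hD|_g)$ up to $\O{C^\infty}(h^\infty)$ errors, using compact microlocalization of $u$, so that all symbols involved are compactly supported and we may work in local coordinates near a chart of $H$. After straightening $H$ to $\{x_d = 0\}$ and choosing coordinates so that $\nu = dx_d$, the operator $\partial_\nu p(x,hD)$ has symbol $\partial_{\xi_d} p(x,\xi)$, which under assumption \eqref{eqn:assume} (namely $p(x_0,\xi)=0 \imply \partial_\xi p(x_0,\xi)\neq 0$, together with the positive definiteness/compactness of $\Sigma_{x_0}$) is nonvanishing and of definite sign on a neighborhood of the relevant piece of $\Sigma$. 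On the support of $\chi(|\partial_{\xi_d}p|/\tilde h)$ we have $|\partial_{\xi_d} p|\sim \tilde h$, and $p$ itself is $\O{}(h)$ on $u$ modulo the quasimode error, so the relevant frequency region is a slab of thickness $\sim \tilde h$ in the $\xi_d$ direction (after a further change of variables making $p$ a coordinate function transverse to $H$).

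The key step is then to invoke \cite[Proposition 1.1]{T14} (as the excerpt's outline promises, cf. \eqref{eqn:cutEst}), which gives precisely that restriction to $H$ of a function microlocalized to a normal-frequency band of width $\tilde h$ around the characteristic variety costs $\tilde h^{-1/2}$ in $L^2$; this is the $L^2$ analogue of the $h^{-1/4}$ slab estimate, and the $\tilde h^{-1/2}$ factor is the square root of the band width $\tilde h$ divided by the natural scale $1$. More concretely, I would split $\chi(|\partial_\nu p(x,hD)|/\tilde h)u$ into a piece supported where $|p|\lesssim \tilde h$ (handled directly by the band estimate) and a piece where $|p|\gtrsim \tilde h$; on the latter, ellipticity of $P$ lets me write the cutoff as $Q\cdot P/h$ plus negligible terms with $Q$ bounded $L^2\to L^2$ uniformly, so that this contribution is controlled by $h^{-1}\|Pu\|_{L^2}$ after applying the (cheap, $h^{-1/4}$-type) restriction bound \eqref{e:stdEst} to $Qu$ — actually more carefully one keeps track that $Q$ itself is localized to a slab of width $\tilde h$ so one still only pays $\tilde h^{-1/2}$, giving the stated $C\tilde h^{-1/2}(\|u\|_{L^2}+Ch^{-1}\|Pu\|_{L^2})$. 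The hypothesis $\tilde h\geq h^{1/2}$ is needed so that the slab of width $\tilde h$ is wide enough for the symbol $\chi(\partial_{\xi_d}p/\tilde h)$ to lie in a good symbol class $S_{1/2}$ (its derivatives in $\xi$ cost $\tilde h^{-1}\leq h^{-1/2}$) and for Tacy's result to apply.

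The main obstacle I expect is bookkeeping the interplay between the $\xi_d$-slab coming from $\chi(\partial_{\xi_d}p/\tilde h)$ and the $p$-slab coming from the quasimode/ellipticity split, and verifying that after the change of variables $(x,\xi)\mapsto(x, p, \xi')$ the relevant localizing symbol genuinely falls into $S^0_{1/2}$ uniformly in $\tilde h\geq h^{1/2}$, so that Tacy's proposition is applicable verbatim. A secondary technical point is patching the local estimates together with a partition of unity on $H$ and controlling the contributions away from $\supp\chi(|\partial_\nu p|/\tilde h)$, which are $\O{C^\infty}(h^\infty)$ by Lemma \ref{lem:pseudCompose} because the symbols have disjoint support; this part is routine. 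The final "in particular" statement is immediate: if $\|Pu\|_{L^2}=\O{}(h)\|u\|_{L^2}$ then $h^{-1}\|Pu\|_{L^2}\lesssim \|u\|_{L^2}$, so the second term is absorbed into the first.
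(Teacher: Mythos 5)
Your proposal takes essentially the same route as the paper: both reduce the lemma to Tacy's \cite[Proposition 1.1]{T14} and the ``in particular'' statement is handled identically. The only substantive difference is in how the reduction is carried out: the paper's entire proof is the algebraic observation that setting $\tilde\chi(x)=\chi(x)/x$ (still in $\Cc(\re)$ with support in $[1,4]$) gives $\chi(\tilde h^{-1}|\partial_\nu p|)=\tilde h^{-1}(\partial_\nu p)_{\tilde h,\tilde\chi}$, so Tacy's bound of $C\tilde h^{1/2}(\|u\|_{L^2}+h^{-1}\|Pu\|_{L^2})$ for $\gamma_H(\partial_\nu p)_{\tilde h,\tilde\chi}(x,hD)u$ immediately becomes the desired $C\tilde h^{-1/2}$ bound after multiplying by $\tilde h^{-1}$; note that Tacy's proposition concerns the cutoff \emph{times} $\partial_\nu p$ and already contains the $h^{-1}\|Pu\|_{L^2}$ term, so the elliptic splitting into $|p|\lesssim\tilde h$ versus $|p|\gtrsim\tilde h$, the change of variables $(x,\xi)\mapsto(x,p,\xi')$, and the symbol-class verification you describe are not needed at this stage.
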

\begin{proof}
We deduce the lemma from the work of Tacy \cite[Proposition 1.1]{T14}, which we recall here
\begin{lemma}[\cite{T14}]
\label{lem:tacyLem}
Let $\zeta\in \Cc(\re)$ have $\supp \zeta\subset [1,4]$. Then for $\tilde{h}\geq h^{1/2}$, 
$$\|\gamma_H(\partial_\nu p)_{\tilde{h},\zeta}(x,hD) u\|\leq C\tilde{h}^{1/2}(\|u\|_{L^2(M)}+h^{-1}\|Pu\|_{L^2(M)})$$
where 
$$(\partial_\nu p)_{\tilde{h},\zeta}(x,\xi)=\zeta(\tilde{h}^{-1}|\partial_\nu p(x,\xi)|)\partial_\nu p(x,\xi).$$
\end{lemma}
Write 
$$\tilde{\chi}(x)=\frac{1}{x}\chi(x).$$
Then, $\tilde{\chi}\in \Cc(\re)$ with $\supp \chi\subset[1,4]$ and
$$\tilde{h}^{-1}(\partial_\nu p)_{\tilde{h},\tilde{\chi}}=\chi(\tilde{h}^{-1}|\partial_\nu p(x,\xi)|).$$
Therefore, Lemma \ref{lem:tacyLem} implies
$$\left\|\gamma_H\chi\left(\frac{|\partial_\nu p(x,hD)|}{\tilde{h}}\right) u\right\|_{L^2(H)}\leq C\tilde{h}^{-1/2}(\|u\|_{L^2(M)}+h^{-1}\|Pu\|_{L^2(M)})$$
as desired.
\end{proof}

\section{The structure of $\mc{G}_0$ and $\Sigma_0$}
\label{sec:structure}
Our goal for this section is to show that near the glancing set $\mc{G}_0$, general Hamiltonians, $p$, have roughly the same structure as the Laplacian, $p=|\xi|_g^2-1$. We will do this using the Malgrange preparation theorem together with similar ideas to those used in \cite{KTZ,MelroseGlanceSurf} to put Hamiltonians in a normal form. We first recall this structure for the Laplacian.
\subsection{Structure of $\mc{G}_0$ and $\Sigma_0$ for the Laplacian} In this section, we work in Fermi normal coordinates. That is, $H=\{x_d=0\}$ and 
$$-h^2\Delta_g=(hD_{x_d})^2+R(x',hD_{x'})+2x_dQ(x_d,x',hD_{x'})+hr(x,hD_x)$$
where $R(x',\xi')=|\xi'|_{g_1}^2$ where $g_1$ is the metric induce on $H$ from $M$, $Q$ is a quadratic function of $\xi'$ such that $Q(0,\cdot,\cdot)$ is the symbol of the second fundamental form of $H$ and $r\in S^1(T^*M)$. In these coordinates,
$$p=\sigma(-h^2\Delta_g-1)=\xi_d^2+R(x',\xi')+2x_dQ(x_d,x',\xi')-1,\quad\quad\partial_\nu p(x,\xi)=2\xi_d.$$
Therefore, 
\begin{align*} 
\Sigma&=\{(x',x_d,\xi',\xi_d)\mid \xi_d^2+R(x',\xi')+2x_dQ(x_d,x',\xi')=1\},&\Sigma_0&=\{(x',\xi')\mid R(x',\xi')\leq 1\},\\
\mc{G}&=\{(x',x_d,\xi',0)\mid R(x',\xi')+2x_dQ(x_d,x',\xi')=1\},& \mc{G}_0&=\{(x',\xi')\mid R(x',\xi')=1\}.
\end{align*}
In particular, notice that $\mc{G}_0=\partial \Sigma_0$, $1-R(x,\xi')$ defines $\mc{G}_0$, and
$$\Sigma=\left\{(\partial_\nu p(x,\xi))^2=\frac{1}{4}R(x,\xi')\right\}.$$
We will show that these three facts continue to hold for a general Hamiltonian $p$ and $b$ defining $\mc{G}$. 

\subsection{Structure of $\mc{G}_0$ and $\Sigma_0$ for general Hamiltonians} 
We will show that $\partial\Sigma_0=\mc{G}_0$ and examine the structure of $\Sigma$ near $\mc{G}$.
Choose coordinates so that $H=\{x_d=0\}$. We start by considering 
$$\Sigma_0\setminus \mc{G}_0=\{(x',\xi')\in T^*H\mid \text{for all }\xi_d\text{ either }p(x',0,\xi',\xi_d)\neq 0\text{ or }\partial_{\xi_d}p(x',0,\xi',\xi_d)\neq 0\}.$$
Consider a point $(x_0',\xi_0')\in \Sigma_0\setminus \mc{G}_0$. Then either $p(x_0',0,\xi_0',\xi_d)\neq 0$ for all $\xi_d$ or there exists $\xi_d$ such that with $(x_0,\xi_0)=(x_0',0,\xi_0',\xi_d)$, $p(x_0,\xi_0)=0$ and $\partial_{\xi_d}p(x_0,\xi_0)\neq 0$. In the first case $(x_0',\xi_0')\notin \Sigma_0$. Therefore, we need only consider the second case.

In the second case, by the implicit function theorem near $(x_0,\xi_0)$, 
$$p(x,\xi)=e(x,\xi)(\xi_d-a(x,\xi'))$$
with $|e(x,\xi)|>c>0$. Therefore, there exists a neighborhood, $U$ of $(x_0',\xi_0')\in T^*H$ such that $U\subset \Sigma_0$. Hence, $(x_0',\xi_0')$ is in the interior of $\Sigma_0$. 

Now, consider a point $(x_0',\xi_0')\in \mc{G}_0$. Then there exists $\xi_d$ such that $p(x_0',0,\xi_0',\xi_d)=0$ and $\partial_{\xi_d}p(x_0',0,\xi_0',\xi_d)=0$. Let $(x_0,\xi_0)=(x_0',0,\xi_0',\xi_d)$. By assumption $\partial_\xi p\neq 0$ on $\Sigma$. Therefore, we may assume that $\partial_{\xi_1}p(x_0,\xi_0)\neq 0$, $\partial_{\xi''}p=0$ where $\xi=(\xi_1,\xi'')$. By the implicit function theorem, near $(x_0,\xi_0)$, with $x=(x_1,x'')$, $\xi=(\xi_1,\xi'')$,
\begin{equation}
\label{eqn:pform}p(x,\xi)=e(x,\xi)(\xi_1-a(x_1,x'',\xi''))\end{equation}
with $|e(x,\xi)|>c>0$. Now, $\partial_{\xi_d}p(x_0,\xi_0)=0$ implies that $\partial_{\xi_d}a(x_0,\xi''_0)=0$. By \eqref{eqn:assume}, $\Sigma_{x_0}$
has positive definite second fundamental form at $\xi_0$. Therefore, since $\partial_{\xi''}p(x_0,\xi_0)=0$, $\partial_{\xi_d}^2p(x_0,\xi_0)\neq 0$ and hence $\partial_{\xi_d}^2a(x_0,\xi_0'')\neq 0$.

Now, we assume without loss that $ \partial_{\xi_d}^2p(x_0,\xi_0)>0$, otherwise take $p\mapsto-p$. Then by the Malgrange preparation theorem 
\begin{equation}\label{eqn:pform1} p(x,\xi)=e(x,\xi)((\xi_d-a_0(x,\xi'))^2-a_1(x,\xi'))\end{equation}
where $ e>c>0$. Now, since $\partial_{\xi_1}p(x_0,\xi_0)\neq 0$, $\partial_{\xi_1}a_1(x_0,\xi_0')\neq 0$ and hence by the implicit function theorem, there exists $|e_1(x,\xi')|>c>0$, $a_2(x,\xi''')$ where $\xi=(\xi_1,\xi''',\xi_d)$ such that 
$$a_1(x,\xi')=e_1(x,\xi')(\xi_1-a_2(x,\xi''')).$$
We will assume again that $\partial_{\xi_1}p(x_0,\xi_0)>0$ ( here we can write $x_1\mapsto -x_1$ if necessary) without loss so that $e_1>c>0$. 
Therefore, 
\begin{equation}
\label{eqn:pform2}p=e(x,\xi)((\xi_d-a_0(x,\xi'))^2-e_1(x,\xi')(\xi_1-a_2(x,\xi''')))
\end{equation}
and, near $(x_0,\xi_0)$, 
$$\Sigma = \{\xi_1\geq a_2(x,\xi''')\}.$$

Now, by \eqref{eqn:assume}, for all $x_0$, $\Sigma_{x_0}$ has everywhere positive definite second fundamental form and is connected. Therefore, it is the boundary of a strictly convex set. Moreover, $\Sigma_{x_0}$ is closed since $p$ is continuous. 
Thus, for any line
$$L:=\{(x_0,\xi_0',\xi_d)\mid \xi_d\in \re\},$$
there are three options,
$\#L\cap \Sigma_{x_0}=2$, $\#L\cap \Sigma_{x_0}=0$, or $L$ is tangent to $\Sigma_{x_0}$ and $\#L\cap\Sigma_{x_0}=1$. 

Now, by \eqref{eqn:pform2} for each $(x_0,\xi_0')$ with
$$\xi_{0,1}-a_2(x_0,\xi_0''')>0,$$ 
we have found two solutions $\xi_d$ to $p(x_0,\xi_{0,1},\xi_0''',\xi_d)=0$ and hence there are no other solutions. 

Next, by \eqref{eqn:pform2} we see that if $\xi_{0,1}-a_2(x_0,\xi_0''')=0$, then $\partial_{\xi_d}p=0$ on $\Sigma_{x_0}\cap L$ and hence $L$ is tangent to $\Sigma_{x_0}$ and there is one point in the intersection, $(x_0,\xi_0)$. Moreover, $\partial_{\xi_1}p(x_0,\xi_0)\neq 0$. Therefore, there is a hyperplane $A$, supporting $\Sigma_{x_0}$ so that $\partial_{\xi_1}$ is transverse to $A$ at $(x_0,\xi_0)$, so for 
$$\xi_{0,1}-a_2(x_0,\xi_0''')<0,$$
there are no solutions $\xi_d$. Together, this implies that $\partial\Sigma_0=\mc{G}_0$. 

Moreover, for any defining function $b$ of $\mc{G}_0$, near $(x_0,\xi_0)$, there exists $\pm e_5>c>0$ such that 
$$b=e_5(x,\xi)(\xi_1-a_2(x,\xi'''))$$
and hence for some $e_6>c>0$,
\begin{equation}
\label{eqn:bdfForm}
\Sigma=\{(\xi_d-a_0(x,\xi'))^2-e_6(x,\xi')b(x,\xi')=0\}
\end{equation}

Summarizing, we have
\begin{lemma}
\label{lem:bdfForm}
Let $p$ satisfy \eqref{eqn:assume} and $H\subset M$ a smooth hypersurface with defining function $r$, Then $\mc{G}_0=\partial\Sigma_0$ and for any $b$ defining $\mc{G}$, there exist $\e>0$, a function $e\in C^\infty(T^*H)$, $c>0$ such that $e>c>0$ and for $\nu$ the dual variable to $r$, (i.e. conormal to $H$) 
$$\Sigma\cap \{|r|\leq \e\}=\{(\partial_\nu p)^2=eb\}.$$
\end{lemma}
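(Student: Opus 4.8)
The plan is to prove both assertions by bringing $p$ into a local normal form near each point of the glancing set and then patching the local descriptions together. Work in Fermi normal coordinates, so that $H=\{x_d=0\}$, the defining function is $r=x_d$, and $\partial_\nu p=\partial_{\xi_d}p$. Two statements have to be checked: that $\partial\Sigma_0=\mc{G}_0$, and that, in a neighborhood of $H$, $\Sigma$ is cut out by $(\partial_\nu p)^2=eb$ for some smooth $e>c>0$.

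For $\partial\Sigma_0=\mc{G}_0$ I would show that $\Sigma_0\setminus\mc{G}_0$ is open and that every point of $\mc{G}_0$ lies in $\partial\Sigma_0$. Near $(x_0',\xi_0')\in\Sigma_0\setminus\mc{G}_0$ there is a $\xi_d$ with $p=0$ but $\partial_{\xi_d}p\neq0$, so the implicit function theorem factors $p=e(x,\xi)(\xi_d-a(x,\xi'))$ with $e$ bounded away from $0$, whence a whole neighborhood of $(x_0',\xi_0')$ sits inside $\Sigma_0$. Near $(x_0',\xi_0')\in\mc{G}_0$, choose $\xi_d$ with $p=\partial_{\xi_d}p=0$ and put $(x_0,\xi_0)=(x_0',0,\xi_0',\xi_d)$; by \eqref{eqn:assume} we may rotate the $\xi$-coordinates so that $\partial_\xi p(x_0,\xi_0)$ points in the $\xi_1$-direction, and positive definiteness of the second fundamental form of $\Sigma_{x_0}$ then forces $\partial^2_{\xi_d}p(x_0,\xi_0)\neq0$. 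After possibly replacing $p$ by $-p$ and $x_1$ by $-x_1$, one application of the Malgrange preparation theorem in $\xi_d$ followed by the implicit function theorem in $\xi_1$ gives, near $(x_0,\xi_0)$,
\begin{equation*}
p=e(x,\xi)\bigl((\xi_d-a_0(x,\xi'))^2-e_1(x,\xi')(\xi_1-a_2(x,\xi'''))\bigr),\qquad e,\,e_1>c>0,
\end{equation*}
with $a_0,a_1,a_2$ naturally independent of $\xi_d$ (completing the square in the monic quadratic produced by preparation leaves the coefficients functions of $(x,\xi')$ only). Hence locally $\Sigma=\{\xi_1\geq a_2\}$ and, restricting to $x_d=0$, $\Sigma_0$ is locally $\{a_1\geq0\}$, so $\partial\Sigma_0=\{a_1=0\}=\{\xi_1=a_2\}$ near $(x_0',\xi_0')$. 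To see that this local picture is globally consistent — that $\Sigma_{x_0}$ has no further sheet over $\{\xi_1<a_2\}$ — I would use that by \eqref{eqn:assume} each $\Sigma_{x_0}$ is a compact, connected, strictly convex hypersurface: the line $\{(x_0,\xi_0',\xi_d)\}$ meets it in $0$, $1$, or $2$ points, the normal form already produces two solutions when $\xi_{0,1}-a_2>0$ and one tangential solution when $\xi_{0,1}-a_2=0$, and the supporting hyperplane at the tangency (transverse to $\partial_{\xi_1}$) rules out solutions when $\xi_{0,1}-a_2<0$. This yields $\partial\Sigma_0=\mc{G}_0$.

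For the factorization, observe that any $b$ defining $\mc{G}_0$ is, near $(x_0',\xi_0')$, a nonvanishing smooth multiple of $\xi_1-a_2$, and the sign of this multiple is forced by $b>0$ on $\Sigma_0\setminus\mc{G}_0$ together with $e_1>0$; with the normal form this gives $\Sigma=\{(\xi_d-a_0)^2=e_6\,b\}$ locally with $e_6>c>0$. Since $a_0$ does not depend on $\xi_d$, on $\Sigma$ we have $\partial_\nu p=\partial_{\xi_d}p=2e(\xi_d-a_0)$ (the term proportional to $p$ drops), so $(\partial_\nu p)^2$ differs from $4e^2e_6\,b$ by a factor of $p$ times a quantity that is nonvanishing near $\mc{G}$; hence $\{(\partial_\nu p)^2=4e^2e_6\,b\}$ coincides with $\Sigma=\{p=0\}$ in a neighborhood of $\mc{G}$. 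Finally, cover $\mc{G}$ (compact, since $\Sigma$ is compact by \eqref{eqn:assume}) by finitely many such charts, glue the local factors $4e^2e_6$ with a partition of unity into one smooth positive $e$, and shrink $\e$ so that $\Sigma\cap\{|r|\leq\e\}$ stays within the region where the identity has been established; away from $\mc{G}$ both $(\partial_\nu p)^2$ and $b$ are bounded below on $\Sigma$, so the remaining gluing is routine.

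The step I expect to be the main obstacle is the convexity argument: passing from the purely local algebraic normal form to the global geometric conclusion that $\Sigma_{x_0}$ is genuinely one-sided along each glancing line, so that $\partial\Sigma_0$ equals $\mc{G}_0$ rather than merely containing it. The other delicate point is bookkeeping the sign normalizations — the reductions $p\mapsto-p$ and $x_1\mapsto-x_1$, and their compatibility with the condition $b>0$ on $\Sigma_0\setminus\mc{G}_0$ and with $e>0$ — which must be tracked consistently through the preparation steps.
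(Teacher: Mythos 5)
Your proposal is correct and follows essentially the same route as the paper: openness of $\Sigma_0\setminus\mc{G}_0$ via the implicit function theorem, Malgrange preparation plus a second implicit function theorem at glancing points to get $p=e((\xi_d-a_0)^2-e_1(\xi_1-a_2))$, the convexity/supporting-hyperplane argument to conclude $\partial\Sigma_0=\mc{G}_0$, and the identification of $b$ with a positive multiple of $\xi_1-a_2$ to obtain the factorization. Your explicit verification that $(\partial_\nu p)^2-4e^2e_6b$ is $q$ times a nonvanishing factor, and the partition-of-unity patching, fill in steps the paper leaves implicit, but the argument is the same.
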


\section{Microlocalization}
\label{sec:microlocalize}
We now prove a lemma that allows us to pass from microlocalization in $\partial_\nu p$ on $M$ to microlocalization on $H$. (See Figure \ref{f:micStruc} for a schematic of the various microlocalizers in the following lemma.)
\begin{lemma}
\label{lem:microlocalize}
Let $\psi\in \mc{S}$ such that $\supp \hat{\psi}\subset [-\e,\e]$, $\psi(0)=1$, $\chi_\nu\in \Cc(\re)$ with $\supp \chi \subset [1,4]$ and $\chi \equiv 1 $ on $[2,3]$, $Q\in \Ph{1}{}(M)$ with 
$$|\sigma(Q)|\leq |\partial_\nu p(x,\xi)|^\mu,\quad\quad\text{on }|p|\leq 1$$
and $b$ define $\mc{G}$. 
Then there exists $0<a_1<a_2$ so that for $\tilde{h}\geq h^{2/3}$, and $\chi_\nu \in \Cc(\re)$ with $\supp \chi_\nu \subset [a_1,a_2]$, 
$$\left(1-\chi\left(\frac{b(x',hD_{x'})}{ \tilde{h}}\right)\right)\gamma_H\chi_\nu\left(\frac{|\partial_\nu p(x,hD)|}{\tilde{h}^{1/2}}\right)Q\psi\left(\frac{P}{h}\right)=\O{L^2\to L^2}(h^{-}\tilde{h}^{-1/4+\mu/2}(h\tilde{h}^{-3/2})^\infty).$$
\end{lemma}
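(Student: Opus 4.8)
The plan is to reduce everything to a symbol calculation in the second microlocal calculus attached to $\Sigma$ at scale $\delta=1$ (with auxiliary parameter $\tilde h$), combined with the structural fact from Lemma~\ref{lem:bdfForm} that $\Sigma\cap\{|r|\le\e\}=\{(\partial_\nu p)^2 = eb\}$ with $e>c>0$. First I would use the fact that $\psi(P/h)$ is, microlocally near $\Sigma$, negligible outside any fixed neighborhood of $\Sigma$ (since $\supp\hat\psi$ is compact, $\psi(P/h)$ is a pseudodifferential operator whose symbol is $\O{}(h^\infty)$ where $|p|>c$), so we may freely insert a cutoff to $\{|r|\le\e\}$ and then work entirely with the normal form for $p$. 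In particular $\psi(P/h)$ may be replaced by $\op_h(\psi_\Sigma)\psi(P/h)$ where $\psi_\Sigma$ is supported in a small neighborhood of $\Sigma$, up to $\O{}(h^\infty)$ errors.

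Next I would identify each of the three operators $\chi_\nu(|\partial_\nu p(x,hD)|/\tilde h^{1/2})$, $Q$, and $1-\chi(b(x',hD_{x'})/\tilde h)$ as members of the class $\Ph{*,*}{\Sigma,1,\tilde h}$, via Lemma~\ref{lem:funcCalculus} and the discussion following it: here the relevant scale is $\delta=1$, the symbol $\partial_\nu p$ vanishes on $\mc G\subset\Sigma$, $b$ vanishes on $\mc G_0$ lifted to $\Sigma$, and $\sigma(Q)$ is controlled by $|\partial_\nu p|^\mu$. The point of the square-root parametrization is that on $\Sigma$ near $\mc G$, the quantity $\tilde h^{-1/2}|\partial_\nu p|$ and the quantity $(\tilde h^{-1}|eb|)^{1/2}$ agree up to an elliptic factor; since $e>c>0$, on the microsupport of $\chi_\nu(|\partial_\nu p|/\tilde h^{1/2})$ — where $a_1\le \tilde h^{-1/2}|\partial_\nu p|\le a_2$, hence $c_1\tilde h\le |b|\le c_2\tilde h$ for suitable $c_1,c_2$ — the function $b(x',\xi')/\tilde h$ lies in a fixed annulus bounded away from $0$. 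Choosing the supports of the two $\chi$'s appropriately (this fixes the constants $a_1<a_2$ and uses $\chi\equiv1$ on $[2,3]$), the symbols of $1-\chi(b/\tilde h)$ and of $\gamma_H$-composed-with the other factors have disjoint supports near $\Sigma$. Then the composition/disjoint-support statement in Lemma~\ref{lem:secondPseud} (with $\delta=1$, residual class $\O{\mc D'\to C^\infty}(\tilde h^\infty)$ near $\Sigma$, refined to the $(h\tilde h^{-3/2})^\infty$ gain) gives that the product is negligible. The restriction operator $\gamma_H$ is handled as in the standard reduction: $\gamma_H$ composed with a second-microlocal operator on $M$ whose symbol is supported where $b/\tilde h$ is in an annulus produces, after applying $1-\chi(b(x',hD_{x'})/\tilde h)$ on $H$, an operator with two disjoint second-microlocal symbols, one on $T^*M$ and one on $T^*H$, and the wavefront disjointness forces the $h^\infty$-type decay; the powers $h^-\tilde h^{-1/4+\mu/2}$ come from tracking the order of $Q\chi_\nu(|\partial_\nu p|/\tilde h^{1/2})$ in the calculus — $\chi_\nu$ contributes the $\tilde h^{-1/4}$ of Lemma~\ref{lem:normal} (really $\tilde h^{-1/4}$ after composing with $\gamma_H$, since restriction at scale $\tilde h^{1/2}$ costs $\tilde h^{-1/4}$) and $Q$ contributes $\tilde h^{\mu/2}$ because on that microsupport $|\sigma(Q)|\le|\partial_\nu p|^\mu\lesssim\tilde h^{\mu/2}$, with the $h^-$ absorbing logarithmic losses in the $L^2$ bound Lemma~\ref{lem:secondL2Bound} at $\delta=1$.

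The constraint $\tilde h\ge h^{2/3}$ enters precisely at the point where we need the error term $(h\tilde h^{-3/2})^\infty$ to actually be small, i.e. $h\tilde h^{-3/2}\le1$, equivalently $\tilde h\ge h^{2/3}$: below this threshold the symbols of the second-microlocal operators at scale $\tilde h$ are no longer genuinely disjoint — the $\tilde h^{-1}h$ spreading of the calculus (the width in the $\lambda$ variable of a quantization of a symbol localized at scale $\tilde h$) overwhelms the $\tilde h$-scale annulus in which $b$ is confined — so the disjoint-support lemma no longer produces a gain. I expect the main obstacle to be exactly this bookkeeping: verifying that the square-root change of variables between $\partial_\nu p$ and $b$ is compatible with the second microlocal symbol classes at $\delta=1$ (so that $\chi_\nu(|\partial_\nu p|/\tilde h^{1/2})$ really lies in $\Ph{0,-\infty}{\Sigma,1,\tilde h}$ with symbol supported where $b/\tilde h$ is in an annulus), and then pushing the elementary disjoint-support estimate through the $\gamma_H$ restriction with the correct powers of $h$ and $\tilde h$. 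Everything else is a routine application of Lemmas~\ref{lem:funcCalculus}, \ref{lem:secondPseud}, \ref{lem:secondL2Bound}, and \ref{lem:bdfForm}.
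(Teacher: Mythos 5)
There is a genuine gap at the heart of your argument: the claim that on the microsupport of $\chi_\nu\bigl(|\partial_\nu p(x,hD)|/\tilde{h}^{1/2}\bigr)$ one has $c_1\tilde{h}\leq |b|\leq c_2\tilde{h}$. The relation $(\partial_\nu p)^2=eb$ from Lemma \ref{lem:bdfForm} holds only \emph{on} $\Sigma=\{p=0\}$; off $\Sigma$ the quantities $\partial_\nu p$ and $b$ are independent, so the symbol of the $\chi_\nu$ cutoff is in no sense supported where $b\sim\tilde{h}$. The localization to $\Sigma$ is enforced only by the factor $\psi(P/h)$, and only in an approximate, oscillatory-integral sense (at scale $|p|\lesssim h$), not as a support condition on a symbol. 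Consequently the disjoint-support composition statement of Lemma \ref{lem:secondPseud} cannot be invoked: the two cutoffs live on different spaces ($T^*M|_H$ at scale $\tilde{h}^{1/2}$ in $\partial_\nu p$ versus $T^*H$ at scale $\tilde{h}$ in $b$), they are separated only after projecting to the characteristic variety and composing with $\gamma_H$, and establishing that separation is precisely the content of the lemma — your reduction assumes the conclusion. There is also a quantitative mismatch: at $\delta=1$ the residual class of the calculus is $\O{\mc{D}'\to C^\infty}(\tilde{h}^\infty)$, which is not the claimed $(h\tilde{h}^{-3/2})^\infty$ gain, and no amount of ``refinement'' of the disjoint-support lemma produces that exponent; it comes from a different mechanism.

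What the paper actually does is write every factor — the FIOs conjugating $b$ and $\partial_\nu p$ to the normal forms $\eta_1$ and $\xi_d$, the propagator $e^{itP/h}$ underlying $\psi(P/h)$, and the restriction $\gamma_H$ — as one explicit multi-variable oscillatory integral, computes the critical set of the total phase $\Psi$, and shows $d\Psi=0$ forces $\eta_1=\xi_d^2$ (this is where $\Sigma=\{(\partial_\nu p)^2=eb\}$ and the symplectomorphisms enter). Choosing $a_1,a_2$ so that $\xi_d^2\tilde{h}^{-1}$ and $\eta_1\tilde{h}^{-1}$ lie in disjoint intervals on the amplitude's support gives $|\xi_d^2-\eta_1|\gtrsim\tilde{h}$, and nonstationary phase then yields the gain — with the key bookkeeping that each $\xi_d$ integration by parts costs $h\tilde{h}^{-3/2}$ (not $h\tilde{h}^{-1}$), because the amplitude oscillates at scale $\tilde{h}^{1/2}$ in $\xi_d$ while the denominator is only bounded below by $\tilde{h}$; this is exactly the origin of the constraint $\tilde{h}\geq h^{2/3}$, which you did identify correctly. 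Finally, the prefactor $h^{-}\tilde{h}^{-1/4+\mu/2}$ is not a logarithmic loss in Lemma \ref{lem:secondL2Bound}: the crude integration-by-parts bound carries a fixed polynomial loss $h^{-M}$, which the paper removes by iterating a Cauchy--Schwarz bootstrap against the a priori restriction bound $\tilde{h}^{-1/4+\mu/2}$ of Lemma \ref{lem:tacyLem}. Your proposal would need to be replaced by (or supplemented with) this explicit phase analysis to close the argument.
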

\begin{proof}

Recall that by Lemma \ref{lem:bdfForm}, there exists $e>c>0$ such that 
\begin{equation}
\label{eqn:sigForm}\Sigma=\{p=0\}=\{(\partial_\nu p)^2=eb\}.
\end{equation}
Without loss of generality, we assume that $e\equiv 1$ since otherwise we can simply absorb $e$ into $b$ and adjust $a_1,a_2$ appropriately. To prove the Lemma, we will use the normal form for second microlocal operators twice. Once before restriction to $H$ and once after. By doing this and using \eqref{eqn:sigForm}, we reduce $\Sigma$ to the form $\{\xi_d^2-\eta_1=0\}$ where we are able to easily analyze the necessary integration by parts.

We may use a partition of unity in a neighborhood of $H$ to reduce to a single coordinate chart.
First, write 
$$\psi\left(\frac{P}{h}\right)=\frac{1}{2\pi}\int e^{\frac{i}{h}tP}\hat{\psi}(t)dt.$$
Then, by \cite[Theorem 10.4]{EZB}, for a pseudodifferential operator, $A_0$ with wavefront set in a small enough neighborhood of a point $(x_0,\xi_0)\in T^*M$
$$\psi\left(\frac{P}{h}\right)A_0=\frac{1}{2\pi(2\pi h)^d}\int e^{\frac{i}{h}(\varphi(t,x,\theta)-\la y,\theta\ra)}a(t,x,y,\theta)\hat{\psi}(t)dtd\theta+\O{\Ph{-\infty}{}}(h^\infty)$$
where 
\begin{equation}
\label{eqn:relation}\partial_t\varphi(t,x,\theta)=p(x,\partial_x\varphi),\quad\quad \varphi(0,x,\theta)=\la x,\theta\ra.
\end{equation} 

Next, for $A_1=\oph(a_1)$ and $\supp(a_1)$ in a small neighborhood of $(x_0',\xi_0')$, by the definition of second microlocal operators, we have for $T_1$ unitary and quantizing a symplectomorphism, $\kappa_1$ such that $\kappa_1^*(b)=\eta_1$, the kernel of
\begin{multline*} 
T_1^{-1}A _1\left(1-\chi\left(\frac{b(x',hD_{x'})}{ \tilde{h}}\right)\right)=Ch^{-M}\int e^{\frac{i}{h}(\la y-y_1',\eta'\ra+\Phi_1(y_1',y_2',\theta_1)}a_1(y,\eta,y_1',y_2',\theta_1,\eta_1/\tilde{h})dy_1'd\eta' d\theta_1\\+\O{L^2\to C^\infty}((h\tilde{h}^{-1})^\infty)
\end{multline*}
with $a_1$ supported in $\eta_1/\tilde{h}\in \supp(1- \chi).$
Similarly, for $T_2$ unitary quantizing a symplectomorphism, $\kappa_2$ such that $\kappa_2^*(\nu )=\xi_d$ and $A_2=\oph(a_2)$ with $\supp(a_2)$ in a small neighborhood of $(x_0,\xi_0)$
\begin{multline*} 
T_2A_2\chi_\nu\left(\frac{\partial_\nu p(x,hD)}{\tilde{h}^{1/2}}\right)T_2^{-1}=Ch^{-M}\int e^{\frac{i}{h}(\la y-y_1,\xi\ra+\Phi_1(y_1,y_2,\theta_1)}a_1(y,\xi,y_1,y_2,\theta_1,\eta_d\tilde{h}^{-1/2})dy_1d\xi d\theta_1\\+\O{L^2\to C^\infty}(h^\infty).
\end{multline*}

Thus, modulo negligible terms, 
\begin{multline} 
\label{eqn:toEstimate}
T_1^{-1}A_1 \left(1-\chi\left(\frac{b(x',hD_{x'})}{ \tilde{h}}\right)\right)\gamma_HA_2\chi_\nu\left(\frac{|\partial_\nu p(x,hD)|}{\tilde{h}^{1/2}}\right)Q\psi\left(\frac{P}{h}\right)A_0\\
Ch^{-M-N}\tilde{h}^{\mu/2}\int e^{\frac{i}{h}(\la x-y_1',\eta'\ra+\Phi_1(y_1',y_2',\theta_1)-y_{2,d}\zeta_d-\Phi_2(y_3,y_2,\theta_2)+\la y_3-y_4,\xi\ra+\la y_4-y_5,\omega\ra+\Phi_2(y_5,y_6,\theta_3) +\varphi(t,y_5,\theta_4)-\la y,\theta_4\ra)}\\\tilde{a}(x,y_1',\eta',y_1',y_2',\theta_1,y_2,y_3,\theta_2,y_4,\theta_3,t,y_5,\theta_4,y_6,\omega,\xi_d\tilde{h}^{-1/2},\eta_1\tilde{h}^{-1})\hat{\psi}(t)\\dy_1'd\eta'd\theta_1dy_2d\zeta_ddy_3d\theta_2dy_4d\xi dy_5d\theta_3dt d\theta_4dy_6d\omega\end{multline}
with 
\begin{equation}
\label{e:supp1}\supp \tilde{a}\subset\{\eta_1\tilde{h}^{-1}\in \supp(1-\chi),\,\xi_d\tilde{h}^{-1/2}\in \supp \chi_\nu\}
\end{equation}

Now, let $\Psi$ denote the phase function in the above integral. Then,
\begin{equation}
\label{eqn:diff}
\begin{aligned} 
d_{y_1'}\Psi&=d_{y_1'}\Phi_1-\eta'&d_{y_2'}\Psi&=d_{y_2'}\Phi_1-d_{y_2'}\Phi_2&d_{y_{2,d}}\Psi&=-\zeta_d-d_{y_{2,d}}\Phi_2
&d_{\zeta_d}\Psi&=-y_{2,d}\\
d_{y_3}\Psi&=-d_{y_3}\Phi_2+\xi&d_{y_5}\Psi&=-\omega+d_{y_5}\Phi_2&
d_{y_6}\Psi&=d_{y_6}\Phi_2+d_{y_6}\varphi&d_t\Psi&=d_t\varphi\\
d_{\theta_1}\Psi&=d_{\theta_1}\Phi_1&d_{\theta_2}\Psi&=-d_{\theta_2}\Phi_2&d_{\theta_3}\Psi&=d_{\theta_3}\Phi_2&d_{\theta_4}\Psi&=d_{\theta_4}\varphi-y\\
d_{\xi}\Psi&=y_3-y_4&d_\omega\Psi&=y_5-y_6&dy_4\Psi&=\omega-\xi
\end{aligned}
\end{equation}
In particular, these equations imply 
\begin{equation*}
\begin{aligned}
p(y_5,\partial_{y_6}\varphi)&=0& \kappa_2(y_6,\partial_{y_6}\varphi)&=(y_4,\xi)\\
\kappa_2(y_2',0,-d_{y_2'}\Phi_2(y_4,y_2',0,\theta_2),\zeta_d)&=(y_4,\xi)&
\kappa_1(y_2',-d_{y_2'}\Phi_2(y_4,y_2',0,\theta_2))&=(y_1',\eta')
\end{aligned}
\end{equation*}
Putting this together with \eqref{eqn:relation} and letting $d$ denote the differential in all variables listed in \eqref{eqn:diff} and using the definition of $\kappa_1$, and $\kappa_2$, we have that 
$$d\Psi=0\quad\imply\quad \eta_1=\xi_d^2.$$

Together, this implies that for some $\gamma_i$ smooth and independent of $h$,
$$\xi_d^2-\eta_1=\sum_{i}\gamma_id_i\Psi$$
where $d_i$ runs over the variables in \eqref{eqn:diff}.

Now, we integrate by parts. In particular, we write 
$$\frac{h}{i}\frac{\sum_{i}\gamma_i d_i\Psi}{\xi_d^2-\eta_1}e^{\frac{i}{h}\Psi}=e^{\frac{i}{h}\Psi}.$$ 
Now, for $a_1=5,a_2=7$, in the support of the integrand, $5\tilde{h}\leq \eta_1\leq7\tilde{h}$ and $|\xi_d|\notin \tilde{h}^{1/2}[4,9].$ Therefore, the denominator is larger than $\tilde{h}.$
 Integration by parts in all variables except $\xi_d$ does not cause any difficulty. However, integration by parts in $\xi_d$ requires closer analysis. 
The $\xi_d$ derivative can fall on $\tilde{a}$, producing $h\tilde{h}^{-3/2}$ or it can fall on the denominator. In this case,
$$\partial_{\xi_d}\frac{1}{\xi_d^2-\eta_1}=\frac{2\xi_d}{(\xi_d^2-\eta_1)^2}.$$

 Suppose that $\xi_d^2\leq C\tilde{h}$, then the numerator is bounded by $\tilde{h}^{1/2}$ and hence the overall bound is $\tilde{h}^{-3/2}$. Furthermore, if $\xi_d^2\geq C\tilde{h}$, then we also have the bound $\tilde{h}^{-3/2}$. For higher order derivatives, the derivative can fall on $\tilde{a}$, $(\xi_d^2-\eta_1)^{-1}$, or $\xi_d$. We have already seen that the first two cases result in terms of size $\tilde{h}^{-3/2}$. For the last case, observe that we replace 
 $$\frac{\xi_d}{(\xi_d^2-\eta_1)^2}\to \frac{1}{(\xi_d^2-\eta_1)^3}$$
 and hence replace a factor which we bounded by $\tilde{h}^{-3/2}$ with one bounded by $\tilde{h}^{-3}$. 
 Thus, after each integration by parts, we gain $h\tilde{h}^{-3/2}$ and the integrand is bounded by 
$$C_Nh^{N}\tilde{h}^{-3N/2}.$$
Hence, there exists $M>0$ such that 
\begin{equation}
\label{eqn:poly}
\left\|\left(1-\chi\left(\frac{b(x',hD_{x'})}{ \tilde{h}}\right)\right)\gamma_H\chi_\nu\left(\frac{|\partial_\nu p(x,hD)|}{\tilde{h}^{1/2}}\right)Q\psi\left(\frac{P}{h}\right)\right\|_{L^2\to L^2}\leq C_N\tilde{h}^{\mu/2} h^{-M}\tilde{h}^{-r}h^N\tilde{h}^{-3N/2}
\end{equation}
where $r=0$.

By the results of  Lemma \ref{lem:tacyLem} applied to $Q\psi(P/h)$, the operator
$$\left\|\gamma_H\chi_\nu(\partial_\nu p(x,hD)/\tilde{h}^{1/2})Q\psi(P/h)\right\|_{L^2\to L^2}=\tilde{h}^{-1/4+\mu/2}$$
and
$$\left(1-\chi\left(b(x',hD_{x'})/\tilde{h}\right)\right)=\O{L^2\to L^2}(1).$$
Therefore, applying Cauchy Schwarz we have that
$$\left\|\left(1-\chi\left(\frac{b(x',hD_{x'})}{ \tilde{h}}\right)\right)\gamma_H\chi_\nu\left(\frac{|\partial_\nu p(x,hD)|}{\tilde{h}^{1/2}}\right)Q\psi\left(\frac{P}{h}\right)\right\|_{L^2\to L^2}\leq \tilde{h}^{\mu/2}C_{2N}^{1/2}h^{-M/2}\tilde{h}^{-(4r+1)/8}(h\tilde{h}^{-3/2})^{N}.$$
 Thus, \eqref{eqn:poly} with $M$ implies the same bound with $h^{-M}\tilde{h}^{-r}$ replaced by $h^{-\frac{M}{2}}\tilde{h}^{-\frac{(4r+1)}{8}}$ and hence \eqref{eqn:poly} is proved with $(M,r)$ replaced by $(M2^{-N}, 1/4(1-2^{-N}))$ by iterating this procedure finitely many times.
\end{proof}

\begin{lemma}
\label{lem:microlocalizeTotallyBicha}
Suppose the $H$ is totally bicharacteristic.
Let $\psi\in \mc{S}$ such that $\supp \hat{\psi}\subset [-\e,\e]$, $\psi(0)=1$,$\chi_\nu\in \Cc(\re)$ with $\supp \chi \subset [1,4]$ and $\chi \equiv 1 $ on $[2,3]$, $Q\in \Ph{1}{}(M)$
with 
$$|\sigma(Q)|\leq |\partial_\nu p(x,\xi)|,\quad\quad\text{on }|p|\leq 1,$$
 and $b$ define $\mc{G}$. 
Then there exists $0<a_1<a_2$ so that for $\tilde{h}\geq h$, and $\chi_\nu \in \Cc(\re)$ with $\supp \chi_\nu \subset [a_1,a_2]$, 
$$\left(1-\chi\left(\frac{b(x',hD_{x'})}{ \tilde{h}}\right)\right)\gamma_H\chi_\nu\left(\frac{|\partial_\nu p(x,hD)|}{\tilde{h}^{1/2}}\right)Q\psi\left(\frac{P}{h}\right)=\O{L^2\to L^2}(h^{-}\tilde{h}^{-1/4+\mu/2}(h\tilde{h}^{-1})^\infty)$$
\end{lemma}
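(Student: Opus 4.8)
The plan is to run the proof of Lemma~\ref{lem:microlocalize} essentially unchanged, improving by one power of $\tilde h^{1/2}$ the loss incurred at each integration by parts; this single improvement is the only place the totally bicharacteristic hypothesis enters. As there, use a partition of unity near $H$ to pass to a single coordinate chart with $H=\{x_d=0\}$ and a glancing point $(x_0,\xi_0)\in\mc{G}$; write $\psi(P/h)=\frac{1}{2\pi}\int e^{\frac ih tP}\hat\psi(t)\,dt$ and, for $A_0$ with wavefront set in a small neighborhood of a point of $T^*M$, represent $\psi(P/h)A_0$ as an $h$-FIO with phase $\varphi$ solving the Hamilton--Jacobi equation $\partial_t\varphi=p(x,\partial_x\varphi)$, $\varphi(0,x,\theta)=\la x,\theta\ra$, as in \eqref{eqn:relation}. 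Then conjugate by elliptic $h$-FIOs $T_1$ straightening $b$ to the second microlocal coordinate $\eta_1$ on $T^*H$ and $T_2$ straightening $\partial_\nu p$ to $\xi_d$ on $T^*M$, arriving at the oscillatory integral \eqref{eqn:toEstimate} with a large phase $\Psi$ and amplitude $\tilde a$ supported where $\eta_1\tilde h^{-1}\in\supp(1-\chi)$ and $|\xi_d|\tilde h^{-1/2}\in[a_1,a_2]$.

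The new input is the geometry of $\Sigma$ near $\mc{G}$ when $H$ is totally bicharacteristic. By \eqref{e:bichar}, along any bicharacteristic through a glancing point the normal defining function $r$ (here $x_d$) vanishes to infinite order, i.e. $H_p^k r=0$ on $\mc{G}$ for all $k$. Consequently the normal form \eqref{eqn:pform2}, and hence the description $\Sigma\cap\{|r|\le\e\}=\{(\partial_\nu p)^2=eb\}$ of Lemma~\ref{lem:bdfForm}, may be arranged with $b$ (and $a_0$) independent of $x_d$ modulo $O(x_d^2)$; for the Laplacian this is exactly the vanishing of the second fundamental form, so that $-h^2\Delta_g=(hD_{x_d})^2+R(x',hD_{x'})+x_d^2\widetilde{Q}(x,hD_x)+hr$ and $\Sigma=\{\xi_d^2=1-R+O(x_d^2)\}$. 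Tracking this through the Hamilton--Jacobi construction of $\varphi$ and the two normal forms, the computation that produced ``$d\Psi=0\Rightarrow\eta_1=\xi_d^2$'' in \eqref{eqn:diff} now yields the stronger statement that $\eta_1=\xi_d^2$ is already forced by the vanishing of $d_i\Psi$ in all variables \emph{except} the normal cofrequency $\xi_d$ (heuristically, because the flow preserving $H$ makes the normal components of the intermediate stationary relations vacuous); equivalently, one obtains an exact identity $\xi_d^2-\eta_1=\sum_i\gamma_i d_i\Psi$ with smooth $h$-independent $\gamma_i$ in which the sum omits the $d_{\xi_d}\Psi$ term.

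Granting this identity, integrate by parts exactly as in Lemma~\ref{lem:microlocalize}: for a suitable choice of $0<a_1<a_2$ the disjointness of $\supp(1-\chi)$ and the range of $\xi_d^2\tilde h^{-1}$ forces $|\xi_d^2-\eta_1|\ge c\tilde h$ on $\supp\tilde a$, and one applies $\mc{L}^t$ with $\mc{L}=\frac hi\sum_i\frac{\gamma_i d_i}{\xi_d^2-\eta_1}$. Because no $d_i$ is $\partial_{\xi_d}$, every application of $\mc{L}^t$ costs only $h/|\xi_d^2-\eta_1|=\O{}(h\tilde h^{-1})$: the dangerous factor $\partial_{\xi_d}(\xi_d^2-\eta_1)^{-1}\sim\xi_d(\xi_d^2-\eta_1)^{-2}\sim\tilde h^{-3/2}$, which forced the loss $h\tilde h^{-3/2}$ and the restriction $\tilde h\ge h^{2/3}$ in Lemma~\ref{lem:microlocalize}, never arises, and the remaining terms are all $\O{}(h\tilde h^{-1})$. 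Iterating $N$ times bounds the integrand by $C_N\tilde h^{\mu/2}h^{-M}\tilde h^{-r}(h\tilde h^{-1})^N$ with $r=0$; then, exactly as at the end of the proof of Lemma~\ref{lem:microlocalize}, Cauchy--Schwarz against the a priori bound $\|\gamma_H\chi_\nu(|\partial_\nu p(x,hD)|\tilde h^{-1/2})Q\psi(P/h)\|_{L^2\to L^2}\le C\tilde h^{-1/4+\mu/2}$ from Lemma~\ref{lem:tacyLem} applied to $Q\psi(P/h)$, plus a finite bootstrap, trades $h^{-M}$ for $h^{-}$ and gives $\O{L^2\to L^2}(h^-\tilde h^{-1/4+\mu/2}(h\tilde h^{-1})^\infty)$. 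The main obstacle is the claim in the second paragraph: verifying precisely how \eqref{e:bichar} propagates through the Hamilton--Jacobi flow and the two normal forms so as to eliminate the $d_{\xi_d}\Psi$ term (equivalently, to make the relevant $\delta=1$ second microlocal calculus along $\mc{G}_0$ available with parameter $h\tilde h^{-1}$, i.e. down to $\tilde h\gtrsim h$ rather than only $\tilde h\gtrsim h^{2/3}$); everything else is a line-by-line transcription of the proof of Lemma~\ref{lem:microlocalize}.
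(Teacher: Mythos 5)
There is a genuine gap, and it sits exactly where you flagged it: the claimed identity $\xi_d^2-\eta_1=\sum_i\gamma_i d_i\Psi$ with the $d_{\xi_d}\Psi$ term omitted is not true, and the totally bicharacteristic hypothesis does not deliver it. If you drop the $d_{\xi_d}\Psi$ equation from \eqref{eqn:diff}, the remaining stationarity relations no longer force $y_{3,d}=y_{4,d}$; what they give (using that $\kappa_2$ is a symplectomorphism, so $\partial_{y_d}p\circ\kappa_2^{-1}=H_p\partial_{\xi_d}p$, which vanishes on $\{x_d=0\}$ for totally bicharacteristic $H$) is only
$$\xi_d^2-\eta_1=\O{}\big(\xi_d(y_{3,d}-y_{4,d})+(y_{3,d}-y_{4,d})^2\big).$$
Since the right side is generically nonzero on the common zero set of the non-$\xi_d$ differentials, $\xi_d^2-\eta_1$ does not vanish there and cannot be written as a smooth combination of those differentials alone. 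Your heuristic that the flow ``makes the normal components of the intermediate stationary relations vacuous'' is where this breaks down.

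The paper's actual mechanism keeps the $d_{\xi_d}\Psi$ term but with a structured coefficient: one writes
$$\xi_d^2-\eta_1=\Big(\gamma\,(\xi_d+(y_{3,d}-y_{4,d}))\,d_{\xi_d}+\sum_i\gamma_i d_i\Big)\Psi,$$
where the sum contains no $d_{\xi_d}$. The factor $(y_{3,d}-y_{4,d})$ is then absorbed using $hD_{\xi_d}e^{\frac{i}{h}\Psi}=(y_{3,d}-y_{4,d})e^{\frac{i}{h}\Psi}$, so the integration-by-parts operator becomes $\big(\gamma(\xi_d hD_{\xi_d}+(hD_{\xi_d})^2)+\sum_i\gamma_i hD_i\big)/(\xi_d^2-\eta_1)$. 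On the support of the amplitude $|\xi_d|\lesssim\tilde h^{1/2}$ and $|\xi_d^2-\eta_1|\gtrsim\tilde h$, so the term $\xi_d hD_{\xi_d}/(\xi_d^2-\eta_1)$ costs $\tilde h^{1/2}\cdot h\tilde h^{-1/2}\cdot\tilde h^{-1}=h\tilde h^{-1}$ and $(hD_{\xi_d})^2/(\xi_d^2-\eta_1)$ costs $(h\tilde h^{-1})^2$; the bad factor $\xi_d(\xi_d^2-\eta_1)^{-2}\sim\tilde h^{-3/2}$ is avoided not because $d_{\xi_d}$ is absent but because it always arrives multiplied by $\xi_d$ or by another $hD_{\xi_d}$. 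Your conversion of this gain into the stated bound via the Cauchy--Schwarz bootstrap against Lemma \ref{lem:tacyLem} is correct once this identity is in place, so the fix is localized: replace the (false) elimination of $d_{\xi_d}\Psi$ with the weighted identity above.
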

\begin{proof}
We modify the proof of Lemma \ref{lem:microlocalize} in the case that $H$ is totally bicharacteristic. Consider \eqref{eqn:diff} without the $d_{\xi_d}\Psi$ information. Then we arrive at
\begin{equation*}
\begin{gathered}
p(y_6,\partial_{y_6}\varphi)=0,\quad\quad\quad \kappa_2(y_6,\partial_{y_6}\varphi)=(y_4,\xi)\\
\kappa_2(y_2',0,-d_{y_2'}\Phi_2(y_4',y_{3,d},y_2',0,\theta_2),\zeta_d)=(y_4',y_{3,d},\xi),\\
\kappa_1(y_2',-d_{y_2'}\Phi_2(y_4',y_{3,d},y_2',0,\theta_2))=(y_1',\eta')
\end{gathered}
\end{equation*}
In particular, 
\begin{align*} 
0=p(\kappa_2^{-1}(y_4,\xi))&=p(\kappa_2^{-1}(y_4',y_{3,d},\xi)+p(\kappa_2^{-1}(y_4',y_{3,d},\xi))-p(\kappa_2^{-1}(y_4',y_{3,d},\xi))\\
&=\xi_d^2-\eta_1+p(\kappa_2^{-1}(y_4',y_{3,d},\xi))-p(\kappa_2^{-1}(y_4',y_{3,d},\xi))
\end{align*}
Now, since $\kappa_2$ is a symplectomorphism,
$$\partial_{y_d}p\composed \kappa_2^{-1}=H_p\partial_{\xi_d}p=\O{}(\partial_{\xi_d}p).$$
But, since $H$ is totally bicharacteristic (in particular, nowhere curved), on $H_p\partial_{\xi_d}p|_{x_d=0}=0$
and 
$$p(\kappa_2^{-1}(y_4',y_{3,d},\xi))-p(\kappa_2^{-1}(y_4',y_{3,d},\xi))=\O{}(\xi_d(y_{3,d}-y_{4,d})+(y_{3,d}-y_{4,d})^2).$$
Therefore, there exists $\gamma_i,\gamma\in C^\infty$ such that 
$$\xi_d^2-\eta_1=\left(\gamma (\xi_d+(y_{3,d}-y_{4,d}))d_{\xi_d}+\sum_{i}\gamma_id_i\right)\Psi$$
where the second term does not contain a term with $d_{\xi_d}$. Moreover, 
$$hD_{\xi_d}e^{\frac{i}{h}\Psi}=(y_{3,d}-y_{4,d})e^{\frac{i}{h}\Psi}.$$ 
Therefore, 
$$\frac{\gamma(\xi_dhD_{\xi_d}+(hD_{\xi_d})^2)+\sum_{i}\gamma_ih D_i}{\xi_d^2-\eta_1}e^{\frac{i}{h}\Psi}=e^{\frac{i}{h}\Psi}.$$ 
So, as in Lemma \ref{lem:microlocalize} integrating by parts in all variables except $\xi_d$ results in terms of the form $\O{}((h\tilde{h}^{-1})^N)$. 
Following the analysis there, when we integrate by parts twice in $\xi_d$ and divide by $\xi_d^2-\eta_1$ once, we lose $\tilde{h}^{-2}$. Similarly, integrating by parts in $\xi_d$ once, dividing by $\xi_d^2-\eta_1$ once, and multiplying by $\xi_d$ we lose $\tilde{h}^{-1}$. Therefore, all the integrand is $\O{}((h\tilde{h}^{-1})^N)$ for any $N$. Together with the same analysis as at the end of Lemma \ref{lem:microlocalize}, this implies the lemma
\end{proof}

\section{Almost orthogonality and the completion of the proof}
\label{sec:final}
We will need two lemmas on almost orthogonality to complete the proof.
\begin{lemma}
\label{lem:almostOrthog1}
Suppose that $\supp \chi \subset[1,4]$. Then for $j,k\geq 0$
\begin{equation}
\begin{gathered}
\left\|\chi\left(\frac{b(x',hD_{x'})}{2^jh}\right)\chi\left(\frac{b(x',hD_{x'})}{2^kh}\right)\right\|_{L^2(H)\to L^2(H)}\leq \begin{cases}C&|j-k|\leq 2\\
0&|j-k|>2
\end{cases}
\end{gathered}
\end{equation}
\end{lemma}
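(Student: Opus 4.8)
The plan is to reduce the whole statement to the spectral theorem for a single self-adjoint operator. First I would record that, since $b\in S^m(T^*H;\re)$ defines $\mc{G}_0$, it is real-valued and elliptic in the sense $|b(x',\xi')|\geq c\la\xi'\ra^m$ for $|\xi'|_g\geq M$; hence, exactly as in the discussion preceding Lemma \ref{lem:funcCalculus}, the operator $B:=b(x',hD_{x'})$ is self-adjoint on $L^2(H)$ with domain $H^m(H)$. The two cutoffs in the statement are then both (bounded) functions of the \emph{same} operator $B$, so by the functional calculus they commute and their product equals $g_{j,k}(B)$, where $g_{j,k}(\sigma):=\chi(\sigma/(2^jh))\,\chi(\sigma/(2^kh))$.

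For the case $|j-k|>2$ I would just use the support condition. Since $\supp\chi\subset[1,4]$, the factor $\chi(\cdot/(2^jh))$ is supported in $[2^jh,\,4\cdot 2^jh]$ and $\chi(\cdot/(2^kh))$ in $[2^kh,\,4\cdot 2^kh]$. If, say, $j\geq k+3$, then $2^jh\geq 8\cdot 2^kh>4\cdot 2^kh$, so these intervals are disjoint, $g_{j,k}\equiv 0$, and the composition is literally the zero operator. (One could instead invoke the second microlocal composition calculus, since Lemma \ref{lem:funcCalculus} shows these are second microlocal pseudodifferential operators, but that would only give an $\O{}(h^\infty)$ term, whereas the functional calculus gives an exact zero, which is what the statement claims.)

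For the case $|j-k|\leq 2$ I would invoke the spectral theorem directly: $\|g_{j,k}(B)\|_{L^2(H)\to L^2(H)}\leq\sup_{\sigma\in\spec(B)}|g_{j,k}(\sigma)|\leq\|\chi\|_{L^\infty(\re)}^2=:C$, and this constant is finite and manifestly independent of $h$, $j$, and $k$. There is essentially no obstacle in this proof; the only points that merit being spelled out are the self-adjointness of $B$ (which legitimizes both the functional calculus and the bound of the operator norm by the sup-norm of $g_{j,k}$) and the uniformity of $C$, which is immediate because $\|\chi\|_{L^\infty}^2$ sees neither $j$, $k$, nor $h$.
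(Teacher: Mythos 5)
Your proof is correct and is exactly the argument the paper intends: its entire proof reads ``The proof follows from the functional calculus,'' and your write-up simply supplies the details (self-adjointness of $b(x',hD_{x'})$ via reality and ellipticity of $b$, the product of the cutoffs as a single function $g_{j,k}$ of that operator, disjointness of the supports $[2^jh,4\cdot 2^jh]$ and $[2^kh,4\cdot 2^kh]$ for $|j-k|>2$, and the sup-norm bound $\|\chi\|_{L^\infty}^2$ otherwise). Nothing further is needed.
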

\begin{proof}
The proof follows from the functional calculus.\end{proof}
\begin{lemma}
\label{lem:almostOrthog2}
Suppose that $H_p$ is nowhere tangent to $H$ to infinite order or is totally bicharacteristic. Then there exists $\e>0$ small enough so that for $\psi \in \mc{S}$ with $\hat{\psi}\subset [-\e,\e]$, $\chi\in \Cc$ with $\supp \chi\subset [1,4]$, $j,k\geq 0$, and $A,B\in \Ph{1}{}(M)$
\begin{multline*}
\left\|\gamma_HB\chi\left(\frac{|\partial_\nu p|(x,hD)}{2^jh^{1/2}}\right)\psi\left(\frac{P}{h}\right)\chi\left(\frac{|\partial_\nu p|(x,hD)}{2^kh^{1/2}}\right)A\gamma_H^*\right\|_{L^2(H)\to L^2(H)}\\ 
\leq h^{-1/2-}2^{-(j+k)(1/2-)}
\begin{cases}C&|j-k|\leq 2\\
C_N2^{-(j+k)N}&|j-k|>2.
\end{cases}
\end{multline*}
\end{lemma}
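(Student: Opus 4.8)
The plan is to treat the \emph{diagonal} range $|j-k|\le 2$ and the \emph{off-diagonal} range $|j-k|>2$ separately. Throughout write $C_j:=\chi\big(|\partial_\nu p|(x,hD)/(2^jh^{1/2})\big)$, so that the operator to be estimated is $K:=\gamma_HBC_j\psi(P/h)C_kA\gamma_H^*$.

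\emph{Diagonal range.} For $|j-k|\le 2$ the bound follows from Tacy's estimate (Lemma \ref{lem:normal}) alone, with no use of the hypothesis on $H$. Fix $\psi_1\in\CIc(\re)$ real with $\psi_1\equiv1$ on $\supp\psi$; the identity $\psi(P/h)=\psi_1(P/h)\psi(P/h)\psi_1(P/h)$ gives $K=\big[\gamma_HBC_j\psi_1(P/h)\big]\,\psi(P/h)\,\big[\psi_1(P/h)C_kA\gamma_H^*\big]$. Since $u:=B\psi_1(P/h)v$ is a compactly microlocalised quasimode with $\|u\|_{L^2}\lesssim\|v\|_{L^2}$, and the commutator $[B,C_j]$ contributes only a term lower order than the main one after $\gamma_H$, Lemma \ref{lem:normal} (applied with its scale equal to $2^jh^{1/2}$) gives $\|\gamma_HBC_j\psi_1(P/h)\|_{L^2\to L^2}\lesssim(2^jh^{1/2})^{-1/2}$, and dually $\|\psi_1(P/h)C_kA\gamma_H^*\|_{L^2\to L^2}\lesssim(2^kh^{1/2})^{-1/2}$. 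As $\|\psi(P/h)\|\le C$, one gets $\|K\|\lesssim(2^jh^{1/2})^{-1/2}(2^kh^{1/2})^{-1/2}=2^{-(j+k)/2}h^{-1/2}$, which is $\le h^{-1/2-}2^{-(j+k)(1/2-)}$.

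\emph{Off-diagonal range.} For $|j-k|>2$ we may assume $j>k+2$, since passing to $K^*$ exchanges $(j,B)$ with $(k,A^*)$. Insert the $\psi_1$ factors as above, write $\psi(P/h)=\tfrac1{2\pi}\int_{-\e}^{\e}\hat\psi(t)e^{itP/h}\,dt$, and reduce to bounding, uniformly in $|t|\le\e$, the operator $K_t:=\gamma_HBC_j\psi_1(P/h)e^{itP/h}\psi_1(P/h)C_kA\gamma_H^*$. Using the $h$-FIO parametrix for $e^{itP/h}$ together with the normal forms placing $\partial_\nu p$ into its model $\xi_d$ (exactly as in the proof of Lemma \ref{lem:microlocalize}), write $K_t$ as an oscillatory integral with phase $\Psi$ whose amplitude $\tilde a$ is supported where the normal momentum lies in $2^jh^{1/2}[1,4]$ at the final point and in $2^kh^{1/2}[1,4]$ at the initial point. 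The geometric heart of the argument: on $\{d\Psi=0\}$ the two points lie over $H$ and are joined by a bicharacteristic of $p$ of length $\le\e$, and for $\e$ small such a bicharacteristic has $|\partial_\nu p|$ comparable at its two endpoints, up to a factor $1+O(\e)<2$, whenever $H$ has only finite-order tangencies — while in the totally bicharacteristic case a nearly-tangent bicharacteristic leaves $H$ and cannot return within time $\e$. For the first statement one works in Fermi coordinates, uses the normal form $\Sigma=\{(\partial_\nu p)^2=eb\}$ of Lemma \ref{lem:bdfForm} and a Taylor/Malgrange analysis of $t\mapsto r(\Phi_t(\rho))$ as in Section \ref{sec:structure}: the nonzero return time satisfies $t^\ast\asymp-\partial_\nu p(\rho)/H_p^2r(\rho)$ (or the analogous higher-order expression), and along it $\partial_\nu p(\Phi_{t^\ast}\rho)=-\partial_\nu p(\rho)(1+O(\e))$. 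For the second one uses that all $H_p^kr$ vanish on $\mc G$, so the return time of a nearly-tangent bicharacteristic is bounded below. In either case, for $j>k+2$ the critical set of $\Psi$ is empty on $\supp\tilde a$ and $|d\Psi|\gtrsim 2^jh^{1/2}$ there.

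\emph{Conclusion, and the main obstacle.} One then integrates by parts in $\Psi$, as in the proof of Lemma \ref{lem:microlocalize}: each integration contributes $\lesssim h\,|d\Psi|^{-1}\max|\partial\tilde a|\lesssim h\,(2^jh^{1/2})^{-1}(2^kh^{1/2})^{-1}=2^{-(j+k)}$ when the derivative falls on $\tilde a$ (worst in the band-$k$ variable, of size $(2^kh^{1/2})^{-1}$), and $\lesssim h\,|d\Psi|^{-2}\lesssim 2^{-2j}\le 2^{-(j+k)}$ when it falls on the $|d\Psi|^{-2}$-type factors, using $|d^2\Psi|=O(1)$; hence after $N$ steps $\|K_t\|_{L^2\to L^2}\lesssim C_Nh^{-M}2^{-(j+k)N}$ for some fixed $M$. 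Combining this with the a priori bound $\|K_t\|\lesssim h^{-1-}$ (two semiclassical trace losses, using that $\psi_1(P/h)C_kA\gamma_H^*f$ is compactly microlocalised) through the iterated Cauchy--Schwarz of Lemma \ref{lem:microlocalize} upgrades it to $\|K_t\|\lesssim h^{-1/2-}2^{-(j+k)N}$ for every $N$, and integrating against $\hat\psi$ over $|t|\le\e$ gives the claimed bound. The main obstacle is the geometric step — controlling, for a general $p$ satisfying \eqref{eqn:assume}, the return map of $\Phi_t$ to $T^*_HM$ near the glancing set, i.e.\ showing that the nonzero return time and the entry/exit normal momentum $|\partial_\nu p|$ are comparable — together with the bookkeeping needed to make the integration-by-parts gain come out exponentially in $j+k$ with no stray negative power of $h$; it is the latter that forces one to work inside the second microlocal calculus throughout, and which produces the $h^{-}$ and $2^{-(j+k)(1/2-)}$ factors in the statement.
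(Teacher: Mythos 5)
Your proposal follows essentially the same route as the paper: the diagonal range from the band-localized Tacy bound $\|\gamma_H\chi(|\partial_\nu p|/2^jh^{1/2})\psi(P/h)\|\lesssim h^{-1/4}2^{-j/2}$ applied to each half, and the off-diagonal range by writing the kernel as an oscillatory integral whose critical points correspond to bicharacteristic chords of length $\le\e$ with both endpoints over $H$, then invoking a dynamical comparability lemma for $\partial_\nu p$ at the two endpoints (the paper's Lemma \ref{lem:dynamics}) to make the phase nonstationary, integrating by parts, and removing the stray $h^{-M}$ by the iterated Cauchy--Schwarz bootstrap from Lemma \ref{lem:microlocalize}. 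Two points in your write-up need adjustment. First, the comparability factor at the endpoints is \emph{not} $1+O(\e)$ in general: for a tangency of order $m$ one gets $\partial_\nu p(\Phi_{t^*}\rho)\approx(1-m)\,\partial_\nu p(\rho)$, so the two-sided constant is only $[c,C]$ with $C$ depending on the maximal order of tangency (this is what Lemma \ref{lem:dynamics} actually delivers, and it is what determines how far off the diagonal one must go before the critical set empties out). Second, the a priori input to the bootstrap matters: the iteration converges to the exponent of the a priori bound you feed it, so starting from the crude trace bound $h^{-1-}$ lands you at $h^{-1-}$, not $h^{-1/2-}$; you must instead feed in the factorized Tacy bounds $h^{-1/4}2^{-j/2}$ and $h^{-1/4}2^{-k/2}$ (which you already established for the diagonal case) to reach the stated prefactor $h^{-1/2-}2^{-(j+k)(1/2-)}$.
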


To prove Lemma \ref{lem:almostOrthog2}, we need the following dynamical lemma.
\begin{lemma}
\label{lem:dynamics}
Suppose that $H=\{x_d=0\}$ is either nowhere tangent to $H_p$ to infinite order or totally bicharacteristic. Let $\Phi_t=\exp(tH_p)$ denote the Hamiltonian flow of $p$. For all $M>0$, there exists $\e>0$ small enough so that 
$$|t|\leq \e,\quad\quad 0<|\partial_\nu p(x',0,\xi)|\leq M,\quad\quad \Phi_t(x',0,\xi)\in T^*M|_{H}$$
implies 
$$c|\partial_\nu p(x',0,\xi)|\leq |\partial_\nu p\composed \Phi_t(x',0,\xi)|\leq C|\partial_\nu p(x',0,\xi)|.$$
\end{lemma}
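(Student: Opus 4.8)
The plan is to split into the two cases of the hypothesis and, in each case, reduce the statement to a quantitative comparison of $\partial_\nu p$ along a short piece of the Hamiltonian flow. Write $f(t) := \partial_\nu p \circ \Phi_t(x',0,\xi)$; by construction $f(0) = \partial_\nu p(x',0,\xi)$, and we want $|f(t)| \sim |f(0)|$ for $|t|\le\e$, under the constraint that both endpoints lie over $H$, i.e. $r\circ\Phi_t(x',0,\xi)=0$ where $r=x_d$ is the defining function. Differentiating, $f'(t) = H_p(\partial_\nu p)\circ\Phi_t = (\partial_\nu p)(\partial_\nu H_p \partial_\nu p) \circ \Phi_t + \cdots$, but the cleanest bookkeeping is to observe that $f'(t) = \{p,\partial_\nu p\}\circ\Phi_t$, and that on the glancing set $\mc G=\{p=0,\ \partial_\nu p=0\}$ the curvature hypothesis \eqref{e:curved} (or its negation for the totally geodesic case) controls the order of vanishing. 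Since $\psi(P/h)$ is microlocalized near $\Sigma$, we only care about points with $|p|$ small, and a compactness argument lets us work in a fixed small neighborhood of $\mc G$ in $T^*M|_H$.

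First I would treat the \emph{totally bicharacteristic} case, which is the softer one. Here \eqref{e:bichar} says that whenever $p=r=H_pr=0$ at a point, the whole bicharacteristic through it stays in $\{r=0\}$; equivalently $H_p^k r$ vanishes on $\mc G$ for all $k$, so near $\mc G$ we have $H_p r = \partial_\nu p \cdot g$ for a smooth $g$ (using $H_p r = \partial_\nu p$ up to an elliptic factor in Fermi coordinates, as in Section \ref{sec:structure}). Then along the flow, $\tfrac{d}{dt}(r\circ\Phi_t) = H_pr\circ\Phi_t$, and the constraint that $\Phi_t$ returns to $H$ forces tight control on how $\partial_\nu p$ can change; more directly, one writes Taylor expansions of $p\circ\Phi_t$ and $\partial_\nu p\circ\Phi_t$ in $t$ around the base point, uses $p\circ\Phi_t = p$ (conservation), and reads off from \eqref{eqn:bdfForm} — $\Sigma = \{(\partial_\nu p - a_0)^2 = e_6\, b\}$ — that $|\partial_\nu p|$ is comparable to $|b(x',\xi')|^{1/2}$, while $b(x',\xi')$ is (approximately) flow-invariant along $H$ in the totally bicharacteristic case. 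This gives $|f(t)|\sim|f(0)|$ directly. For the \emph{nowhere tangent to infinite order} case, I would argue by contradiction and compactness: if the conclusion failed there would be sequences $t_n\to 0$, $(x_n',\xi_n)$ with $\Phi_{t_n}(x_n',0,\xi_n)\in T^*M|_H$ but $f(t_n)/f(0)\to 0$ or $\infty$; passing to a limit point $(x_0',\xi_0)$, and using that the return of the bicharacteristic to $H$ at a time $\to 0$ forces $(x_0',0,\xi_0)\in\mc G$ (tangency), the finite-order tangency hypothesis bounds the multiplicity of the zero of $t\mapsto r\circ\Phi_t$, hence bounds below $|\partial_\nu p\circ\Phi_t|$ in terms of $|t|$ and the distance to $\mc G$ in a uniform way — contradicting the degeneration. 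The key algebraic input is again \eqref{eqn:bdfForm}, which pins $|\partial_\nu p|$ to $|b|^{1/2}$, reducing everything to the comparison of the single function $b(x',\xi')$ at the two flow-related footpoints.

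The main obstacle I expect is making the "return to $H$ at small time forces near-glancing, quantitatively" step uniform: one must rule out that $\partial_\nu p$ at the base point is $O(M)$ but the flow dips deep below $H$ and comes back with $\partial_\nu p$ of a wildly different size within time $\e$. Controlling this requires bounding $r\circ\Phi_t$ and its first two $t$-derivatives in terms of $\partial_\nu p$ at the base point — essentially showing $r\circ\Phi_t = t\,\partial_\nu p(x',0,\xi) + O(t^2)$ with the $O(t^2)$ coefficient controlled by $H_p^2 r$, which near the curved part is bounded below and near the bicharacteristic part vanishes — and then invoking that $r\circ\Phi_t$ has a zero at $t$ and at $0$ to solve for the admissible $t$ and the resulting change in $\partial_\nu p$. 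Once $r\circ\Phi_t$ is expanded this way, the comparison $|f(t)|\sim|f(0)|$ for $|t|\le\e$ follows from Taylor's theorem with $\e$ chosen after fixing $M$, as in the statement. The rest is a routine compactness/continuity argument on the compact set $\{|p|\le 1\}\cap\{|\partial_\nu p|\le M\}\cap T^*M|_H$.
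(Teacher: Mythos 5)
Your overall skeleton matches the paper's: reduce to the scalar functions $x_d(t)=r\circ\Phi_t$ and $f(t)=\partial_\nu p\circ\Phi_t=\dot x_d(t)$, use the return condition $x_d(0)=x_d(t)=0$ to pin down the admissible $t$, and treat the two hypotheses separately. However, both of your case analyses have genuine gaps at exactly the quantitative step the lemma is about. In the totally bicharacteristic case, you reduce the claim via \eqref{eqn:bdfForm} to ``$b$ is approximately flow-invariant along $H$,'' but the invariance you need is \emph{multiplicative}: $b(\Phi_t)/b$ bounded above and below. An additive estimate of the form $b(\Phi_t)-b=\O{}(t^2|\partial_\nu p|)$ (which is what Taylor expansion gives) is useless when $b\sim(\partial_\nu p)^2$ is much smaller than $t$, and that is precisely the regime the lemma is applied in (all dyadic scales $2^jh^{1/2}$ down to $h^{1/2}$, with $t$ up to a fixed $\e$). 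Since $(\partial_\nu p)^2=eb$ on $\Sigma$, the multiplicative invariance of $b$ is \emph{equivalent} to the conclusion, so this reduction is circular unless you supply the dynamical input. The paper supplies it by noting that total bicharacteristicity gives $\ddot x_d=H_p\partial_{\xi_d}p=\O{}(x_d^2+|\dot x_d|)$ and running a bootstrap/Gronwall argument showing $|\dot x_d(t)-\dot x_d(0)|<\tfrac12|\dot x_d(0)|$ for $|t|\leq\e$; some such argument is unavoidable.

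In the finite-order case, your contradiction-by-compactness framing does not directly work: along the degenerating sequence both $f(t_n)$ and $f(0)$ tend to zero, so passing to the limit loses the ratio, and you must instead prove uniform quantitative control near $\mc G$. The paper does this by showing there are uniform $K$ and $a>0$ such that near any near-glancing point $x_d(t)=\partial_{\xi_d}p(0)\,t+\tfrac{A}{k}t^k+\O{}(t^{k+1})+\O{}(t^2\partial_{\xi_d}p(0))$ with $2\leq k\leq K$ and $|A|\geq a$; the return condition $x_d(t)=0$ then forces $At^{k-1}\approx -k\,\partial_{\xi_d}p(0)$, whence $\partial_{\xi_d}p(t)=(1-k)\partial_{\xi_d}p(0)(1+\O{}(t))$ and the two-sided comparison with constants depending only on $K$. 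Your quantitative discussion (``the $\O{}(t^2)$ coefficient controlled by $H_p^2r$, which near the curved part is bounded below'') only covers simple tangency $k=2$; the hypothesis permits arbitrary finite-order tangency, where $H_p^2r$ vanishes at glancing and the expansion to order $k$, with uniformity of $k$ and of the leading coefficient over the compact set, is the essential (and missing) content.
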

\begin{remark}
One can see using the example 
$$H:=\left\{\begin{aligned}(x,e^{-1/x^2})&&x>0\\(x,0)&&x\leq 0\end{aligned}\right\}\subset\re^2$$
with $P=-h^2\Delta-1$ that if $H$ is tangent to $H_p$ to infinite order but is not totally bicharacteristic then the conclusion of Lemma \ref{lem:dynamics} may not hold.
\end{remark}
\begin{proof}
We may assume $t\geq 0$, the proof of the opposite case being identical.
Let 
$$(x(t),\xi(t)):=\exp(tH_p)(x',0,\xi).$$ Then 
$$ \dot{x}_d(t)=\partial_{\xi_d}p(x(t),\xi(t)).$$
First, observe that for any fixed $\delta>0$, if $|\dot{x}_d(0)|>\delta$, then there exists $\e$ small enough so that 
$|x_d(t)|>0$ for $0<t<\e$. Hence, the claim is trivial for $|\dot{x}_d(0)|>\delta$ for any fixed $\delta$. 

\noindent {\textbf{ $H$ is nowhere tangent to $H_p$ to infinite order.}}
\par  There exists $\delta_0>0$ small enough, $C>0$ large enough so that if $\delta<\delta_0$ and
$$x_d=0,\quad\quad 0<|\partial_{\xi_d}p(x_0',0,\xi_0)|<\delta_0$$
then there exists $(y',\eta)$ with $d((x_0',0,\xi_0),(y,\eta))<C|\partial_{\xi_d}p(x_0',0,\xi_0)|$ such that $\partial_{\xi_d}p(y',0,\eta)=0.$ Therefore, there exists $a>0$, $K>0$ such that if $x_d(0)=0$, and $|\dot{x}_d(t)|<\delta_0$,
then there exists $2\leq k\leq K$ and $|A|>a>0$ such that 
\begin{gather*} x_d(t)=\partial_{\xi_d}p(0)t+ \frac{A}{k}t^k+\O{}(t^{k+1})+\O{}(t^2\partial_{\xi_d}p(0))\\
\partial_{\xi_d}p(t)=\partial_{\xi_d}p(0)+At^{k-1}+\O{}(t^{k})+\O{}(t\partial_{\xi_d}p(0)).
\end{gather*}
Therefore, suppose $x_d(t)=0$, $t\neq 0$. Then 
$$\partial_{\xi_d}p(t)=\partial_{\xi_d}p(0)(1-k+\O{}(t))+\O{}(t^k).$$

Now, since $|t|<\e$, if $|t|^k>\frac{1}{4}|\partial_{\xi_d}p(0)|$ then $|t|^{k-1} >\frac{1}{4|t|}|\partial_{\xi_d}p(0)|$ and hence for $\e>0$ small enough,
$$|x_d(t)|\geq |t|\left(\left(\frac{|A|}{k}-C|t|\right)t^{k-1}-(1+C|t|)|\partial_{\xi_d}p(0)|\right)\geq \frac{a}{100K}|\partial_{\xi_d}p(0)|\geq 0.$$
Therefore, 
$$|\partial_{\xi_d}p(0)(|(1-k)|-1/2)
\leq |\partial_{\xi_d}p(t)|\leq |\partial_{\xi_d}p(0)|(|(1-k)|+1/2)$$
when $0<|\partial_{\xi_d}p(0)|<\delta_0.$ In particular, 
$$\frac{1}{2}|\partial_{\xi_d}p(0)|
\leq |\partial_{\xi_d}p(t)|\leq |\partial_{\xi_d}p(0)|2K$$

\noindent {\textbf{$H$ is totally bicharacteristic}}
\par When $H$ is totally bicharacteristic, 
$$\ddot x_d=H_p\partial_{\xi_d}p=\O{}(x_d^2+|\partial_{\xi_d}p|).$$
Now, if $x_d(0)=0$,
\begin{align*} 
|\dot{x}_d(t)-\dot{x}_d(0)|&=\left|\int_0^t\ddot{x}_d(s)ds\right|\leq C\int_0^tx_d^2(s)+|\dot{x}_d(s)|ds\\
&=C\int_0^t\left(\int_0^s\dot{x}_d(w)dw\right)^2+C\int_0^t|\dot{x}_d(s)|ds\\
&\leq C\int_0^t\int_0^s\dot{x}^2_d(w)sdwds+C\int_0^t|\dot{x}_d(s)|ds\\
&= C\frac{1}{2}\int_0^t\dot{x}^2_d(w)(t-w)^2dw+C\int_0^t|\dot{x}_d(s)|ds\\
&\leq C \int_0^t\frac{1}{2}t^2\dot{x}^2_d(s)+|\dot{x}_d(s)|ds
\end{align*}
Now, let 
$$t_0:=\inf_{t>0}\left\{|\dot{x}_d(t)-\dot{x}_d(0)|\geq\frac{|\dot{x}_d(0)|}{2}\right\}.$$
If $t_0>\e$ then we are done. If not, then $|\dot{x}_d(t_0)-\dot{x}_d(0)|=\frac{|\dot{x}_d(0)|}{2}.$
Therefore, 
\begin{align*}
|\dot{x}_d(t_0)-\dot{x}_d(0)|= \frac{1}{2}|\dot{x}_d(0)|&\leq\frac{3}{2}C \int_0^{t_0}\frac{3}{4}t_0^2\dot{x}^2_d(0)+|\dot{x}_d(0)|ds\\
&=C\frac{3}{2}\left(\frac{1}{4}t_0^3|\dot{x}_d(0)|+t_0\right)|\dot{x}_d(0)|
\end{align*}
So, 
$$C^{-1}\leq \frac{3}{4}t_0^3|\dot{x}_d(0)|+3t_0$$
and choosing $\e>0$ small enough finishes the proof of the lemma.
\end{proof}
\begin{proof}[Proof of Lemma \ref{lem:almostOrthog2}]
We assume that $H$ is given by $\{x_d=0\}$. Then the kernel of
$$\gamma_H\chi\left(\frac{|\partial_\nu p|(x,hD)}{2^jh^{1/2}}\right)\psi\left(\frac{P}{h}\right)\chi\left(\frac{|\partial_\nu p|(x,hD)}{2^kh^{1/2}}\right)\gamma_H^*$$
is given microlocally near a point $(x_0,\xi_0)\in T^*H$ by
\begin{multline*} 
Ch^{-3d}\int e^{\frac{i}{h}(\la x'-w_1',\eta'\ra -w_{1,d}\eta_d+\varphi(t,w_1,\theta)-\la w_2,\theta\ra +w_{2,d}\xi_d+\la w_2'-y',\xi'\ra)}
\widehat{\psi}(t)\tilde{a}dw_1dw_2dtd\theta d\xi d\eta\end{multline*}
where 
$$\supp \tilde{a}\subset\{\partial_\nu p(x',0,\eta)2^{-k}h^{-1/2}\in \supp \chi,\,\partial_\nu p(w_2,\xi)2^{-j}h^{-1/2}\in \supp\chi\}$$
and 
$$|\partial^\alpha \tilde{a}|\leq C_\alpha 2^{-\min(j,k)|\alpha|}h^{-|\alpha|/2}.$$
Let $\Phi_t$ denote the Hamiltonian flow of $p$. Then the phase is stationary when 
$$p(x,\xi)=0,\quad\quad \Phi_t(y',0,\xi)=(x',0,\xi)$$ 
and by Lemma \ref{lem:dynamics} there exist $\e,\delta>0$ so that for $|t|\leq \e$, and $0<|\partial_\nu p(x',0,\xi)|\leq \delta$, there exist $c,C>0$ so that when $\Phi_t(y',0,\xi)\in T^*M|_H$, 
$$c|\partial_\nu p|\leq |\partial_\nu p\composed \Phi_t|\leq C|\partial_\nu p|.$$
In particular, on the support of the integrand, 
$$|\Phi_t(y',0,\xi)-(x',0,\xi)|>2^{\max(k,j)}h^{1/2}.$$
Therefore, integration by parts proves the estimate with $h^{-1/2-}2^{-(1/2-)(j+k)}$ replaced by $h^{-M}$. To obtain the lemma, we then repeat the argument at the end of Lemma \ref{lem:microlocalize} using the fact that 
$$\left\|\gamma_H\chi\left(\frac{|\partial_\nu p|(x,hD)}{2^jh^{1/2}}\right)\psi\left(\frac{P}{h}\right)\right\|_{L^2\to L^2}\leq Ch^{-1/4}2^{-j/2}.$$
\end{proof}

Now, let $\rho\leq 1$, $s\in\re$, $\mu\in \re$, $G_1^{\rho,s}$ be as in the introduction, $A\in \Ph{1}{}(M)$ have 
$$|\sigma(A)|\leq C|\partial_\nu p|^\mu,\quad\quad\text{ on }|p|\leq 1,$$
 and $\chi \in \Cc(\re)$ so that 
$$\sum_{j=0}^\infty\chi(2^{-j}x)\equiv 1\quad\quad\text{ for }x\in[1/8,\infty).$$
Define
$$\chi_0=1-\sum_{j=0}^\infty \chi.$$
Let $Mh^{-1}\geq 2^J\gg h^{-1}$.
Then, since $\partial_\nu p$ is uniformly bounded above on $\{p=0\}$, 
$$A\psi\left(\frac{P}{h}\right)=\left(\chi_0\left(\frac{|\partial_\nu p(x,hD_x)|}{2^jh^{\rho/2}}\right)+\sum_{j=0}^J\chi\left(\frac{|\partial_\nu p(x,hD_x)|}{2^jh^{\rho/2}}\right)\right)A\psi\left(\frac{P}{h}\right).$$
Define
$$T_j=G_1^{\rho,s}(b(x',hD_{x'}))\gamma_H\chi\left(\frac{|\partial_\nu p(x,hD_x)|}{2^jh^{\rho/2}}\right)A\psi\left(\frac{P}{h}\right)$$
so that 
$$G_1^{\rho,s}(b(x',hD_{x'}))\gamma_HA\psi\left(\frac{P}{h}\right)=\sum_{j=0}^JT_j+\begin{cases}\O{L^2\to L^2}(h^\infty)&\rho<1\\
\O{L^2\to L^2}(h^{s+\mu/2})&\rho=1.\end{cases}$$
The $h^\infty$ error is clearly negligible. To see that the $h^{s+\mu/2}$ error is negligible, we use the estimate \eqref{e:stdEst}
$$\left\|\gamma_H A\psi\left(\frac{P}{h}\right)\right\|_{L^2(M)\to L^2(H)}\leq Ch^{-1/4}$$
 to see that for $s\geq1/4-\mu/2$, the error term is uniformly bounded in $h$. 

Now, by Lemmas \ref{lem:microlocalize} and \ref{lem:microlocalizeTotallyBicha},
\begin{multline*}T_j=G_1^{\rho,s}(b(x',hD_{x'}))\chi_j\left(\frac{b(x',hD_{x'})}{2^jh^{\rho}}\right)\gamma_H\chi_\nu\left(\frac{\partial_\nu p(x,hD)}{2^{j/2}h^{\rho/2}}\right)A\psi\left(\frac{P}{h}\right)\\
+\O{L^2\to L^2}(h^{-}h^{\rho(s-1/4+\mu/2)}2^{j(s-1/4+\mu/2)}(h^{1-\alpha\rho}2^{-\alpha j})^\infty)\end{multline*}
where $\alpha=3/2$ unless $H$ is totally bicharacteristic, in which case $\alpha=1$.
So, taking 
$\rho<\alpha^{-1}$ or $s> 1/4-\mu/2$, the remainder term is summable with sum bounded uniformly in $h$. Therefore, we may analyze only 
$$\tilde{T}_j=G_1^{\rho,s}(b(x',hD_{x'}))\chi_j\left(\frac{b(x',hD_{x'})}{2^jh^{\rho}}\right)\gamma_H\chi_\nu\left(\frac{\partial_\nu p(x,hD)}{2^{j/2}h^{\rho/2}}\right)A\psi\left(\frac{P}{h}\right).$$

In this case, Lemmas \ref{lem:normal}, \ref{lem:almostOrthog1}, and \ref{lem:almostOrthog2} show that if $H_p$ is nowhere tangent to $H$ to infinite order or totally bicharacteristic then for $\e>0$ small enough, and $|j-k|>2$,
\begin{equation*} 
\|\tilde{T}_j\tilde{T}_k^*\|_{L^2\to L^2}+\|\tilde{T}_j^*\tilde{T}_k\|_{L^2\to L^2}
\leq    C_N2^{(j+k)(s+\mu/2-1/4-)}h^{\rho(2 s+\mu-1/2-)} h^{(1-\rho)N}2^{-(j+k)N}
\end{equation*}
Moreover, 
$$\|\tilde{T}_j\|^2_{L^2\to L^2}\leq C2^{j(2s+\mu-1/2)}h^{\rho(2s+\mu-1/2)}.$$
Then by the Cotlar--Knapp--Stein lemma (see for example \cite[Theorem C.5]{EZB}) if $s\geq 1/4-\mu/2$ and $\rho<1$ or $s>1/4-\mu/2$ and $\rho\leq 1$ 
$$\|\sum_{j}\tilde{T}_j\|_{L^2\to L^2}\leq C.$$

If $H_p$ is somewhere tangent to infinite order, then Lemma \ref{lem:almostOrthog2} does not apply and therefore we instead estimate for $\rho<1$,
$$\left\la \sum_{j}\tilde{T}_{j}u,\sum_k\tilde{T}_ku\right\ra \leq C\|u\|^2_{L^2(M)}\begin{cases}
\log h^{-1}&s=1/4-\mu/2\\
1&s>1/4-\mu/2
\end{cases}.$$
Hence, 
\begin{lemma}
Let $\alpha=1$ if $H$ is totally bicharacteristic, and $3/2$ otherwise, $\mu \geq 0$, $0\leq\rho\leq \alpha^{-1}$ and $s\geq 1/4-\mu/2$, and $A\in \Ph{1}{}(M)$ have 
$$|\sigma(A)|\leq C|\partial_\nu p(x,\xi)|^\mu,\quad\quad \text{on }|p|\leq 1.$$
Then if $\rho < \alpha^{-1}$ or $s>1/4-\mu/2$, and either $H_p$ is nowhere tangent to infinite order to $H$ or $H$ is totally bicharacteristic,
$$\left\|G_1^{\rho,s}(b(x',hD_{x'}))\gamma_HA\psi\left(\frac{P}{h}\right)\right\|_{L^2(M)\to L^2(H)}\leq C.$$
If instead $H_p$ is somewhere tangent to infinite order to $H$ but not totally bicharacteristic, then 
$$\left\|G_1^{\rho,s}(b(x',hD_{x'}))\gamma_HA\psi\left(\frac{P}{h}\right)\right\|_{L^2(M)\to L^2(H)}\leq C\begin{cases}(\log h^{-1})^{1/2}&s=1/4-\mu/2\\
1&s>1/4-\mu/2\end{cases}.$$
\end{lemma}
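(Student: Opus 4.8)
The idea is to split $A\psi(P/h)$ into a core glancing piece and a dyadic family of pieces localized to $(\partial_\nu p(x,hD))^2\asymp 2^{j}h^{\rho}$, to upgrade the microlocalization on $M$ of each dyadic piece to microlocalization on $H$ using Lemmas~\ref{lem:microlocalize} and \ref{lem:microlocalizeTotallyBicha}, to bound the diagonal terms by Lemma~\ref{lem:normal}, and finally to sum. Fix $\chi\in\Cc(\re)$ supported near $1$ with $\sum_{j\ge 0}\chi(2^{-j}x)\equiv 1$ on $[1/4,\infty)$ and put $\chi_0=1-\sum_{j\ge 0}\chi(2^{-j}\cdot)$. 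Since $\partial_\nu p$ is bounded on $\{p=0\}$, on the microsupport of $\psi(P/h)$ one has $|\partial_\nu p|\le C_0$, so for $2^{J}\gg h^{-\rho}$,
$$G_1^{\rho,s}(b(x',hD_{x'}))\gamma_HA\psi(P/h)=E_0+\sum_{j=0}^{J}T_j,\qquad T_j:=G_1^{\rho,s}(b(x',hD_{x'}))\,\gamma_H\,\chi\big(2^{-j}h^{-\rho}(\partial_\nu p(x,hD))^2\big)\,A\,\psi(P/h),$$
where $E_0=G_1^{\rho,s}(b(x',hD_{x'}))\gamma_H\chi_0\big(h^{-\rho}(\partial_\nu p(x,hD))^2\big)A\psi(P/h)$, and only the indices with $2^{j}\lesssim h^{-\rho}$ contribute. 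For $E_0$: by Lemma~\ref{lem:bdfForm} one has $b\asymp(\partial_\nu p)^2$ near $\Sigma$, so (arguing as in the proof of Lemma~\ref{lem:microlocalize}) $\gamma_H\chi_0(h^{-\rho}(\partial_\nu p(x,hD))^2)A\psi(P/h)$ is microlocalized on $T^*H$ to $\{b\lesssim h^{\rho}\}$, which is disjoint from the microsupport $\{b\gtrsim h^{\rho}\}$ of $G_1^{\rho,s}(b(x',hD_{x'}))$; when $\rho<1$, Lemma~\ref{lem:secondPseud} gives $E_0=\O{L^2\to L^2}((h^{1-\rho})^\infty)$, and when $\rho=1$ the $\O{}(h^{s})$ remainder in the $\delta=1$ description of $G_1^{1,s}(b(x',hD_{x'}))$ from Section~\ref{sec:second}, together with $\|\gamma_H\chi_0(h^{-1}(\partial_\nu p(x,hD))^2)A\psi(P/h)\|\lesssim h^{-1/4+\mu/2}$ (from \eqref{e:stdEst} and $|\sigma(A)|\le|\partial_\nu p|^{\mu}$), yields $\|E_0\|\lesssim h^{s-1/4+\mu/2}=\O{}(1)$ since $s\ge 1/4-\mu/2$ and $\mu\ge 0$.

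Apply Lemma~\ref{lem:microlocalize}, or Lemma~\ref{lem:microlocalizeTotallyBicha} when $H$ is totally bicharacteristic, with $\tilde h=2^{j}h^{\rho}$ and $Q=A$; this replaces $T_j$ by
$$\tilde T_j:=G_1^{\rho,s}(b(x',hD_{x'}))\,\chi\big(2^{-j}h^{-\rho}b(x',hD_{x'})\big)\,\gamma_H\,\chi\big(2^{-j}h^{-\rho}(\partial_\nu p(x,hD))^2\big)\,A\,\psi(P/h)$$
up to an error $\O{}\!\big(h^{-}\,h^{\rho(s-1/4+\mu/2)}\,2^{j(s-1/4+\mu/2)}\,(h^{1-\alpha\rho}2^{-\alpha j})^\infty\big)$, with $\alpha=3/2$ in general and $\alpha=1$ in the totally bicharacteristic case. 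The factor $(2^{-\alpha j})^\infty$ makes the sum over $0\le j\le J$ geometric, and choosing the exponent in $(h^{1-\alpha\rho})^\infty$ large enough shows the total error is $\O{}(1)$ exactly when $\rho<\alpha^{-1}$ or $s>1/4-\mu/2$. On the support of $\chi(2^{-j}h^{-\rho}b(x',hD_{x'}))$ one has $b\asymp 2^{j}h^{\rho}$, so $\|G_1^{\rho,s}(b(x',hD_{x'}))\chi(2^{-j}h^{-\rho}b(x',hD_{x'}))\|\lesssim(2^{j}h^{\rho})^{s}$; combined with Lemma~\ref{lem:normal} at scale $2^{j/2}h^{\rho/2}$ and $|\sigma(A)|\le|\partial_\nu p|^{\mu}\lesssim(2^{j/2}h^{\rho/2})^{\mu}$ on the support of the normal cutoff, this yields
$$\|\tilde T_j\|_{L^2(M)\to L^2(H)}\lesssim(2^{j}h^{\rho})^{s}\,(2^{j/2}h^{\rho/2})^{\mu-1/2}=2^{j(s+\mu/2-1/4)}h^{\rho(s+\mu/2-1/4)},$$
so $\|\tilde T_j\|^2\lesssim 2^{j(2s+\mu-1/2)}h^{\rho(2s+\mu-1/2)}=\O{}(1)$ by $2s+\mu-1/2\ge 0$ and $2^{j}\lesssim h^{-\rho}$.

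To sum the $\tilde T_j$, note first that by Lemma~\ref{lem:almostOrthog1} the $b$-cutoffs in $\tilde T_j^*\tilde T_k$ are orthogonal for $|j-k|>2$, so $\tilde T_j^*\tilde T_k$ is negligible there regardless of the dynamics. When $H_p$ is nowhere tangent to $H$ to infinite order or $H$ is totally bicharacteristic, Lemma~\ref{lem:almostOrthog2} (resting on the dynamical Lemma~\ref{lem:dynamics}) also makes $\|\tilde T_j\tilde T_k^*\|$ decay rapidly in $|j-k|$, since $j+k\ge|j-k|$ on the relevant support; together with the diagonal estimate this verifies the Cotlar--Knapp--Stein hypotheses of \cite[Theorem~C.5]{EZB} and gives $\|\sum_j\tilde T_j\|\le C$, which is the first assertion. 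In the remaining case Lemma~\ref{lem:almostOrthog2} no longer applies, so we keep only the $b$-orthogonality and estimate
$$\Big\|\sum_j\tilde T_j\Big\|^2=\Big\|\sum_{|j-k|\le 2}\tilde T_j^*\tilde T_k\Big\|+\O{}(h^\infty)\lesssim\sum_{j=0}^{J}\|\tilde T_j\|^2\lesssim\sum_{j=0}^{J}2^{j(2s+\mu-1/2)}h^{\rho(2s+\mu-1/2)};$$
if $s>1/4-\mu/2$ this geometric sum is $\lesssim 1$, while if $s=1/4-\mu/2$ every term is $\O{}(1)$ and there are $J+1\lesssim\log h^{-1}$ of them, giving $\|\sum_j\tilde T_j\|\lesssim(\log h^{-1})^{1/2}$. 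Collecting this with the bounds on $E_0$ and on the $T_j\mapsto\tilde T_j$ errors proves the lemma.

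I expect the delicate point to be the error bookkeeping. The passage $T_j\mapsto\tilde T_j$ through Lemmas~\ref{lem:microlocalize}/\ref{lem:microlocalizeTotallyBicha} is precisely where the threshold $\rho\le\alpha^{-1}$, with strict inequality at the critical exponent $s=1/4-\mu/2$, is forced by the square-root degeneracy of the projection $\pi\colon\Sigma\to\Sigma_0$ near $\mc G_0$; one must check that nothing is lost at the endpoint there or in the treatment of $E_0$, and that away from $s=1/4-\mu/2$ the dyadic sum $\sum_j\|\tilde T_j\|^2$ is genuinely bounded and not merely $\O{}(\log h^{-1})$.
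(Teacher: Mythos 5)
Your proposal follows the paper's own argument essentially step for step: the same dyadic decomposition in $|\partial_\nu p|$ at scales $\asymp 2^{j/2}h^{\rho/2}$, the same passage from $T_j$ to $\tilde T_j$ via Lemmas \ref{lem:microlocalize}/\ref{lem:microlocalizeTotallyBicha} with the identical summability condition $\rho<\alpha^{-1}$ or $s>1/4-\mu/2$, the same diagonal bound from Lemma \ref{lem:normal}, Cotlar--Knapp--Stein in the nondegenerate cases, and the crude $O(\log h^{-1})$-term summation otherwise. The argument is correct (your treatment of the core glancing piece $E_0$ is if anything slightly more explicit than the paper's), and no further comparison is needed.
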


\section{Optimality of the power $1/4$.}
\label{sec:optimal}
We will show that the power $1/4$ is optimal for curved $H$ and for totally geodesic $H$. In fact, we will show that the power $1/4$ is sharp at every scale. More precisely, letting $\mu=2/3$ if $H$ is curved, and $1$ if it is totally geodesic, for all $0\leq \rho_1<\rho_2< \mu$, we give examples of eigenfunctions $u_h$ with 
$$\|(G_{1}^{\rho_2,s}-G_{1}^{\rho_1,s})(1+h^2\Delta_H))u_h|_H\|_{L^2(H)}\geq ch^{\rho_2(s-1/4)}.$$
\subsection{$H$ curved}
Consider the unit disk $B(0,1)\subset \re^2$. Then the Dirichlet eigenfunctions are given by 
$$u=c_n J_n(\lambda r)e^{in\theta},\quad\quad J_n(\lambda)=0.$$
Let $H=\{|x|=1/2\}$ and fix $\alpha<2/3$. 
By the uniform asymptotics for zeroes of Bessel functions (\cite[Section 10.20,10.21]{NIST} ), the $m^{\text{th}}$ zero of the $n^\text{th}$ Bessel function is given by
$$j_{n,m}=nz(\zeta)+\O{}(n^{-1}),\quad\quad \zeta=n^{-2/3}a_m$$
where $a_m$ is the $m^{\text{th}}$ zero of the Airy function and $\zeta$ solves
$$\left(\frac{d \zeta}{dz }\right)^2=\frac{1-z^2}{\zeta z^2}$$
and is infinitely differentiable on $0<z<\infty.$ 
The zeroes of the airy function have 
$$a_m=-\left(\frac{3}{8}\pi(4m-1)\right)^{2/3}+\O{}(m^{-4/3}).$$ 

Now, since $z(0)=1$ and $\lim_{\zeta\to -\infty}z(\zeta)=\infty$, there exists $\zeta_0<0$ with $z(\zeta_0)=2$ and, moreover, there exists $c>0$ so that $\zeta_0<-c<0$. Hence,  for any $\beta<1$, $M>0$ there exists $m\geq cn$,  such that 
$$n^{-2/3} a_m\in \zeta_0 -[Mn^{-\beta},(M+1)n^{-\beta}].$$ 
In particular,  
$$z(n^{-2/3}a_m)\in 2+[Mn^{-\alpha},(M+1)n^{-\alpha}]$$
which implies there exists 
\begin{equation}\label{eqn:lambPos}\lambda_n\in 2n+[Mn^{1-\alpha},(M+1)n^{1-\alpha}]
\end{equation} such that 
$$u_n=c_ne^{in\theta}J_n(\lambda(n)r)$$
is a Dirichlet eigenfunction. One can see that to $L^2$ normalize $u_n$, $c_n\sim cn^{1/2}.$ Moreover, 
$$J_n(nz)=\left(\frac{4\zeta}{1-z^2}\right)^{1/4}\left(\frac{Ai(n^{2/3}\zeta)}{n^{1/3}}+\O{}(n^{-2/3})\right).$$
So, evaluating at $\frac{1}{2}\lambda_n$ and using the asymptotics for the Airy function gives, since $0\leq \alpha<2/3$, 
$$\left|J_n\left(\frac{1}{2}\lambda_n\right)\right|\geq cn^{-1/3-(2/3-\alpha)/4}.$$
Therefore, 
\begin{equation}\label{eqn:curvedAsymptotic}u_n|_{H}=A_ne^{in\theta},\quad\quad\quad |A_n|\geq cn^{-1/3-(2/3-\alpha)/4},\end{equation}
and, taking $M$ large enough in \eqref{eqn:lambPos}, $\rho_1<\alpha\leq \rho_2$, 
\begin{equation}
\label{eqn:estCurved}
\begin{gathered}G_{1}^{\rho_2,s}(1+\lambda_n^{-2}\Delta_H)u_n|_H= \left(1-4\frac{n^2}{\lambda_n^2} \right)^su_n|_H\sim c n^{-\alpha s}u_n|_H\\
G_{1}^{\rho_1,s}(1+\lambda_n^{-2}\Delta_H)u_n|_H=0
\end{gathered}
\end{equation}
Hence, for $\alpha<2/3$ and $\rho_1<\alpha<\rho_2$, using \eqref{eqn:curvedAsymptotic} and \eqref{eqn:estCurved},
$$\|(G_{1}^{\rho_2,s}-G_{1}^{\rho_1,s})(1+\lambda_n^{-2}\Delta_H)u_n|_H\|_{L^2(H)}\geq cn^{\alpha(1/4-s)}\geq \tilde{c}\lambda_n^{\alpha(1/4-s)}$$
for some $\tilde{c}>0$.
In particular, for $s<1/4$, this unbounded.

\subsection{$H$ totally geodesic}
Consider the unit sphere, $S^2\subset\re^3$. Let 
$$[0,2\pi)\times [0,\pi]\ni (\theta,\phi)\mapsto (\cos \theta\sin\phi,\sin \theta\sin\phi,\cos \phi)\in S^2$$
be coordinates on $S^2$. Then an orthonormal basis of Laplace eigenfunctions is given by 
$$Y^m_l(\theta,\phi)=\left(\frac{(l-m)!(2l+1)}{4\pi(l+m)!}\right)^{1/2}e^{im\theta}P^m_l(\cos \phi),\quad\quad -l\leq m\leq l,$$
where $P^m_l$ is an associated Legendre function (see for example \cite[Section 14.30]{NIST}). For the definition of $P^m_l$ see \cite[Section 14.2]{NIST}. Note that 
$$(-\Delta_{S^2}-\lambda_l^2)Y^m_l=0,\quad\quad \lambda_l:=\sqrt{l(l+1)}.$$

Let $H:=\{\phi=\frac{1}{2}\pi\},$ fix $\alpha<1$ and let 
\begin{equation}
\label{eqn:mpos}m_l\in l-[M,M+1]l^{1-\alpha}
\end{equation} so that $l+m_l\in 2\mathbb{Z}$ (i.e. is even). Then by \cite[Section 14.30ii]{NIST}, 
$$Y^{m_l}_l\left(\theta,\frac{1}{2}\pi\right)=\frac{(-1)^{(l+m_l)/2}}{2^l\left(\frac{1}{2}(l-m_l)\right)!\left(\frac{1}{2}(l+m_l)\right)!}\left(\frac{(l-m_l)!(l+m_l)!(2l+1)}{4\pi}\right)^{1/2}e^{im_l\theta}=:A_{l}e^{im_l\theta}.$$
After some straightforward, but tedious computations, one finds that for some $a>0$, 
$|A_{l}|\geq cl^{\alpha/4}.$
Hence, 
\begin{equation}
\label{eqn:flatAsymptotic}Y^{m_l}_l|_H=A_le^{im_l\theta},\quad\quad \quad|A_{l}|\geq cl^{\alpha/4}.
\end{equation}

Now, for $\rho_1<\alpha\leq \rho_2$, using \eqref{eqn:mpos} and taking $M$ large enough,
\begin{equation}
\label{eqn:estFlat}
\begin{gathered} G^{\rho_2,s}_{1}(1+\lambda_l^{-2}\Delta_{H})Y^{m_l}_l|_{H}\geq \left(1-\frac{m_l^2}{\lambda_l^2}\right)^sY^{m_l}_l|_H\\
G^{\rho_1,s}_{1}(1+\lambda_l^{-2}\Delta_{H})Y^{m_l}_l|_{H}=0.
\end{gathered}
\end{equation}
Hence, for $\alpha<1$ and $\rho_1<\alpha<\rho_2$, using \eqref{eqn:flatAsymptotic} and \eqref{eqn:estFlat},
$$\|(G_{1}^{\rho_2,s}-G_{1}^{\rho_1,s})(1+\lambda_l^{-2}\Delta_H)Y^{m_l}_l|_H\|_{L^2(H)}\geq a_1l^{\alpha(1/4-s)}\geq \tilde{a}\lambda_l^{\alpha(1/4-s)}$$
for some $\tilde{a}>0$.
In particular, for $s<1/4$, this is unbounded.

\section{Application to Cauchy Quantum Ergodic Restrictions}
\label{sec:QER}

We now prove Theorem \ref{thm:QER}. Let $u_h$ be a quantum ergodic sequence of Laplace eigenfunctions. Then by the quantum ergodic restriction theorem for Cauchy data \cite{CTZ} (see also \eqref{e:cauchy}), for $A\in \Ph{}{}(H)$,
$$\la Ah\partial_\nu u|_H,h\partial_\nu u|_H\ra +\la (1+h^2\Delta_H)Au|_H,u|_H\ra \to \frac{4}{\mu_L(S^*M)}\int_{B^*H}\sigma(A)\sqrt{1-|\xi'|_g^2}dxd\xi'.$$
Let $\chi_\delta \in \Cc(\re)$ with $\chi_\delta\equiv 1$ on $\{|x|\leq 1-2\delta\}$, with $\supp \chi\subset \{|x|\leq 1-\delta\}$ and $\psi_\delta=1-\chi_\delta$. Then denote 
$$\chi_\delta(h):=\chi_\delta(-h^2\Delta_H),\qquad\psi_\delta(h):=\psi_\delta(-h^2\Delta_H),\quad\quad G_1^{\rho,s}(h):=G_1^{\rho,s}(1+h^2\Delta_H).$$
Let $A\in \Ph{}{}(H)$, $s<1/2$, and consider
\begin{align} 
&\la G_1^{2/3,-s}(h)Ah\partial_\nu u,h\partial_\nu u\ra +\la (1+h^2\Delta_H)G_1^{2/3,-s}(h)A u|_H,u|_H\ra\nonumber\\
&\quad=\la (\chi_\delta(h)+\psi_\delta(h))G_1^{2/3,-s}(h)Ah\partial_\nu u,h\partial_\nu u\ra
+\la (\chi_\delta(h)+\psi_\delta(h))G_1^{2/3,1-s}(h) Au|_H,u|_H\ra\nonumber\\
&\quad=\frac{4}{\mu_L(S^*M)}\int \chi_\delta(|\xi'|_g^2)\sigma(A)(x,\xi)(1-|\xi'|_g^2)^{1/2-s}dxd\xi' +\o{\delta}(1)\nonumber\\
&\quad\qquad+\la \psi_\delta(h)G_1^{2/3,-s}(h)Ah\partial_\nu u,h\partial_\nu u\ra +\la \psi_\delta(h)G_1^{2/3,1-s}(h)A u|_H,u|_H\ra\label{e:yank}
\end{align} 
The proof of Theorem \ref{thm:QER} will be complete after we estimate the term in \eqref{e:yank}.

Let $\tilde{G}_1^{2/3,\beta}$ be defined as in \eqref{e:Gdef} but with $\tilde{\chi}_1$ replacing $\chi_1$ so that $\tilde{\chi}_1 \chi_1=\chi_1$ and $\supp \tilde{\chi}\subset [1/2,\infty)$. Next, let $\tilde{\psi}_\delta$ have $\supp \tilde{\psi}_\delta\subset \{|x|\geq1-3\delta\}$ and $\tilde{\psi}_\delta\psi_\delta=\psi_\delta.$ Then,
$$\psi_\delta(h)G_1^{2/3,\beta_1+\beta_2}(h)=\psi_\delta(h)G_1^{2/3,\beta_1}(h)\tilde\psi_\delta(h)\tilde G_1^{2/3,\beta_2}(h)$$
and hence
\begin{align*}
\psi_\delta(h)G_1^{2/3,\beta_1+\beta_2}(h)A&=\psi_\delta(h)G_1^{2/3,\beta_1}(h)A\tilde{\psi}_\delta(h)\tilde{G}_1^{2/3,\beta_2}(h)+\psi_\delta(h)G_1^{2/3,\beta_1}(h)[\tilde{\psi}_\delta(h)\tilde{G}_1^{2/3,\beta_2}(h),A]\\
&=\psi_\delta(h)G_1^{2/3,\beta_1}(h)A\tilde{\psi}_\delta(h)\tilde{G}_1^{2/3,\beta_2}(h)+\O{L^2\to L^2}(h^{1-2/3(1-\min(\beta_1,0)-\min(\beta_2,0))}).
\end{align*}
Therefore, 
\begin{align*}\la \psi_\delta(h)G_1^{2/3,1-s}(h)Au|_H,u|_H\ra&=\la \psi_\delta(h)G_1^{2/3,(1-s)/2}(h)Au|_H,\tilde{\psi}_\delta(h)\tilde{G}_1^{2/3,(1-s)/2}(h)u|_H\ra\\
&=\la \psi_\delta(h)G_1^{2/3,0}(h)A\tilde{\psi}_\delta(h)\tilde G_1^{2/3,(1-s)/2}(h)u|_H,\tilde{\psi}_\delta(h)\tilde{G}_1^{2/3,(1-s)/2}(h)u|_H\ra\\
&\qquad+\la \O{L^2\to L^2}(h^{1-2/3})u|_H,\tilde{\psi}_\delta(h)\tilde{G}_1^{2/3,(1-s)/2}(h)u|_H\ra
\end{align*}
\begin{align*}
&\la \psi_\delta(h)G_1^{2/3,-s}(h)Ah\partial_\nu u|_H,h\partial_\nu u|_H\ra\\
&\qquad\quad=\la \psi_\delta(h)G_1^{2/3,-s/2}(h)Ah\partial_\nu u|_H,\tilde{\psi}_\delta(h)\tilde{G}_1^{2/3,-s/2}(h)h\partial_\nu u|_H\ra\\
&\qquad\quad=\la \psi_\delta(h)G_1^{2/3,0}(h)A\tilde{\psi}_\delta(h)\tilde G_1^{2/3,-s/2}(h)h\partial_\nu u|_H,\tilde{\psi}_\delta(h)\tilde{G}_1^{2/3,-s/2}(h)h\partial_\nu u|_H\ra\\
&\qquad\qquad\qquad+\la \O{L^2\to L^2}(h^{1-2/3(1+s)})h\partial_\nu u|_H,\tilde{\psi}_\delta(h)\tilde{G}_1^{2/3,-s/2}(h)h\partial_\nu u|_H\ra
\end{align*}
Now, by the functional calculus of self adjoint operators,
$$\tilde{\psi}_\delta(h)\tilde{G}_1^{2/3,\beta}(h)=Z_\alpha\tilde{\psi}_\delta(h)\tilde{G}_1^{2/3,\beta-\alpha}(h),\qquad Z_\alpha=\O{L^2\to L^2}(\delta^\alpha).$$
By Theorem \ref{thm:general}, for $\beta<1/4$,
$$\|\tilde G_1^{2/3, 1/2-\beta}(h)u|_H\|_{L^2(H)}+\|\tilde G_1^{2/3,-\beta}(h)h\partial_\nu u|_H\|_{L^2(H)}\leq C.$$
Hence, we have
$$\|\tilde{\psi}_\delta(h)G_1^{2/3,(1-s)/2}(h)u\|_{L^2(H)}+\|\tilde{\psi}_\delta(h)G_1^{2/3,-s/2}(h)h\partial_\nu u\|_{L^2(H)}\leq C\delta^{s/2-1/4-}.$$
In particular, using \eqref{e:stdEst} and \eqref{eqn:neumannBound} to see that
$\|u|_H\|_{L^2(H)}\leq Ch^{-1/4}$ and $\|h\partial_\nu u\|_{L^2(H)}\leq C,$
together with $h^{3/4-2/3}=\o{}(1)$ and $h^{1-2/3(1+s)}=\o{}(1)$, 
$$\la \psi_\delta(h)G_1^{2/3,-s}(h)Ah\partial_\nu u,h\partial_\nu u\ra +\la \psi_\delta(h)G_1^{2/3,1-s}(h)A u|_H,u|_H\ra=\o{\delta}(1)+\O{}(\delta^{1/2-s-}).$$
Therefore, 
\begin{multline*} 
\la G_1^{2/3,-s}(h)Ah\partial_\nu u,h\partial_\nu u\ra +\la (1+h^2\Delta_H)G_1^{2/3,-s}(h)A u|_H,u|_H\ra\\=\frac{4}{\mu_L(S^*M)}\int \chi_\delta(|\xi'|_g^2)\sigma(A)(x,\xi)(1-|\xi'|_g^{2})^{1/2-s}dxd\xi'+\o{\delta}(1)+\O{}(\delta^{1/2-s-}).
\end{multline*}
So, since $s<1/2$, letting $h\to 0$ and then $\delta\to 0$, we have 
\begin{multline*}
\la G_1^{2/3,-s}(1+h^2\Delta_H)Ah\partial_\nu u,h\partial_\nu u\ra +\la G_1^{2/3,1-s}(1+h^2\Delta_H) u|_H,u|_H\ra\\
\to \frac{4}{\mu_L(S^*M)}\int_{B^*H}\sigma(A)(x,\xi)(1-|\xi'|_g^2)^{1/2-s}dxd\xi',
\end{multline*}
completing the proof of Theorem \ref{thm:QER}.

\bibliography{biblio.bib}
\bibliographystyle{alpha}
\end{document}